\documentclass[a4paper,12pt]{article}
\usepackage[T1]{fontenc}
\usepackage[utf8]{inputenc}
\usepackage{textcomp}

\usepackage{indentfirst}   % 核心：section/subsection 之后第一段也缩进
\makeatletter
\@ifl@t@r\fmtversion{2020/10/01}
  {\IfFileExists{newtxtext.sty}{\usepackage[osf]{newtxtext}}{\usepackage{times}}}
  {\usepackage{times}}
\makeatother
\usepackage{mathtools,amssymb,amsthm,mathrsfs}
\usepackage{booktabs,array,tabularx,needspace,placeins}
\newcommand{\PageMargin}{25mm}
\usepackage[a4paper,margin=\PageMargin,
            headheight=15pt,headsep=7mm]{geometry}
\usepackage{microtype,titlesec,fancyhdr}
\usepackage[shortlabels]{enumitem}
\usepackage{xcolor,aliascnt}
\usepackage{tikz}
\usepackage[numbers,sort&compress]{natbib}
\definecolor{burgundy}{RGB}{128,0,32}
\definecolor{controlfill}{RGB}{110,180,228}
\definecolor{controledge}{RGB}{0,100,132}
\definecolor{phaseorange}{RGB}{190,92,18}
\definecolor{ink}{RGB}{35,45,55}
\usepackage[colorlinks=true,allcolors=burgundy,linktoc=all]{hyperref}
\IfFileExists{bookmark.sty}{\usepackage{bookmark}}{}

\DeclareSymbolFont{exsm}{OT1}{ptm}{m}{n}
\DeclareMathSymbol({\mathopen}{exsm}{`(}
\DeclareMathSymbol){\mathclose}{exsm}{`)}
\DeclareMathSymbol[{\mathopen}{exsm}{`[}
\DeclareMathSymbol]{\mathclose}{exsm}{`]}
\DeclareMathAlphabet{\mathbf}{OT1}{ptm}{bx}{n}
\DeclareMathAlphabet{\mathrm}{OT1}{ptm}{m}{n}
\DeclareMathAlphabet{\mathit}{OT1}{ptm}{m}{it}
\DeclareSymbolFont{sfoperators}{OT1}{ptm}{m}{n}
\makeatletter
\def\operator@font{\mathgroup\symsfoperators}
\makeatother

\titleformat{\section}{\centering\large\bfseries}{\thesection}{.75em}{}
\titleformat{\subsection}[runin]{\normalsize\bfseries}{\thesubsection}{.75em}{}[.]
\titlespacing*{\section}{0pt}{3ex plus 1ex minus .2ex}{2ex plus .2ex}
\titlespacing*{\subsection}{0pt}{2ex plus 1ex minus .2ex}{1em}
\allowdisplaybreaks
\newcommand{\R}{\mathbb R}
\newcommand{\C}{\mathbb C}

\newcommand{\Om}{\Omega}
\newcommand{\dd}{\,\mathrm d}
\newcommand{\ip}[2]{\langle #1,#2\rangle}
\newcommand{\norm}[1]{\lVert #1\rVert}
\newcommand{\abs}[1]{\lvert #1\rvert}
\newcommand{\supp}{\operatorname{supp}}
\newcommand{\dist}{\operatorname{dist}}

\newcommand{\Real}{\operatorname{Re}}
\newcommand{\Imag}{\operatorname{Im}}
\newcommand{\Ss}{\mathcal S_s}
\newcommand{\Ys}{\mathcal Y_s}
\newcommand{\Us}{\mathcal U_s}
\newcommand{\Hs}{\mathcal H_s}
\newcommand{\one}{\mathbf 1}

\numberwithin{equation}{section}
\newtheorem{theorem}{Theorem}[section]
\newaliascnt{lemma}{theorem}
\newtheorem{lemma}[lemma]{Lemma}
\aliascntresetthe{lemma}
\newaliascnt{proposition}{theorem}
\newtheorem{proposition}[proposition]{Proposition}
\aliascntresetthe{proposition}
\newaliascnt{corollary}{theorem}
\newtheorem{corollary}[corollary]{Corollary}
\aliascntresetthe{corollary}
\theoremstyle{definition}
\newaliascnt{definition}{theorem}
\newtheorem{definition}[definition]{Definition}
\aliascntresetthe{definition}
\newaliascnt{assumption}{theorem}
\newtheorem{assumption}[assumption]{Assumption}
\aliascntresetthe{assumption}
\newaliascnt{example}{theorem}
\newtheorem{example}[example]{Example}
\aliascntresetthe{example}
\theoremstyle{remark}
\newaliascnt{remark}{theorem}
\newtheorem{remark}[remark]{Remark}
\aliascntresetthe{remark}
\newaliascnt{problem}{theorem}

\aliascntresetthe{problem}
\newtheorem*{remark*}{Remark}
\usepackage[capitalize,noabbrev]{cleveref}
\newcolumntype{Y}{>{\raggedright\arraybackslash}X}
\setlist[enumerate]{label=\textup{(\roman*)},leftmargin=*,itemsep=3pt,topsep=5pt}
\newcommand{\TitleToDateGap}{4mm}          % 标题与日期之间的额外间距
\newcommand{\BeforeAuthorGap}{8mm}         % 姓名前的额外间距（原为 4mm）
\newcommand{\AuthorToAffiliationGap}{3mm}  % 姓名与单位之间的额外间距
\newcommand{\AffiliationToEmailGap}{2mm}  % 单位与邮箱之间的额外间距
\newcommand{\BeforeAbstractTitleExtraGap}{2mm} % Abstract 标题上方额外间距；0mm 恢复原间距
\newcommand{\AbstractTitleToTextGap}{2mm}      % Abstract 标题与摘要正文之间的间距
\newcommand{\BeforeIntroductionExtraGap}{10mm} % 摘要与 Introduction 之间额外增加的间距；0mm 恢复原间距
\newcommand{\AfterTitleBlockGap}{3mm}     % 标题信息块之后的额外间距
\title{Global exact controllability for Schr\"odinger equations with Lipschitz and logarithmic nonlinearities}
\author{Lei Zhang\textsuperscript{1,*} and Xiuting Li\textsuperscript{2}}
\date{\today}
 
\newcommand{\authorinfo}{%
  \textsuperscript{1}School of Mathematics and Statistics, Hubei Key Laboratory of\\
  Engineering Modeling and Scientific Computing, Huazhong University of Science and Technology\par
  \textsuperscript{2}Department of Mathematics and Statistics, College of Informatics,\\
  Huazhong Agricultural University\par
  \vspace{\AffiliationToEmailGap}%
  \begin{tabular}{@{}l@{\quad}l@{\quad}l@{}}
    Emails: & \texttt{lei\_zhang@hust.edu.cn} & (L. Zhang)\\
            & \texttt{xtingli@mail.hzau.edu.cn} & (X. Li)
  \end{tabular}\par
  \smallskip Corresponding author: Lei Zhang.\par}
\makeatletter
\renewcommand{\maketitle}{%
  \thispagestyle{empty}%
  \par\begingroup
  \centering
  \setlength{\parskip}{0pt}% Avoid accumulated paragraph spacing.
  {\Large\bfseries\@title\par}%
  \ifx\@date\@empty\else
    \vspace{\TitleToDateGap}%
    {\normalsize\@date\par}%
  \fi
  \vspace{\BeforeAuthorGap}%
  \begin{minipage}{.94\textwidth}
    \centering
    {\large\bfseries\@author\par}%
    \vspace{\AuthorToAffiliationGap}%
    {\small\authorinfo\par}%
  \end{minipage}\par
  \endgroup
  \vspace{\AfterTitleBlockGap}}
\makeatother
\renewenvironment{abstract}
  {\begin{center}\begin{minipage}{.94\textwidth}\small
   \vspace*{\BeforeAbstractTitleExtraGap}%
   {\centering\textbf{Abstract}\par}\vspace{\AbstractTitleToTextGap}\noindent}
  {\end{minipage}\end{center}}
\hypersetup{pdfstartview={},
  pdftitle={Global exact controllability for Schrodinger equations with Lipschitz and logarithmic nonlinearities},
  pdfauthor={Lei Zhang; Xiuting Li},
  pdfsubject={Semilinear Schrodinger control, parameter-dependent Carleman estimates, and logarithmic nonlinearities}}

\begin{document}
\maketitle
\begin{abstract} 
\qquad  In this paper, we study global exact controllability of semilinear Schr\"odinger equations by localized internal additive controls. For every globally Lipschitz nonlinearity in arbitrary dimensions, we prove that any two $L^2$ states can be connected in any prescribed positive time, without smallness assumptions on the states or the Lipschitz constant. The result holds on smooth bounded Euclidean domains under a local convex-phase condition, and on an interval with any nonempty open control region. It also extends to compact Riemannian manifolds under the corresponding phase condition. In one dimension, we further establish controllability for logarithmic nonlinearities of the form
\[
f(z)=z\phi(|z|)+\mathfrak g(z),
\]
where $\phi$ is real-valued, $\mathfrak g$ is globally Lipschitz, and $r|\phi'(r)|\leq D(1+|\log r|)^\gamma$ with $0\leq\gamma<1/2$. For the endpoint case $\gamma=1/2$, we achieve the result by assuming the leading coefficient is sufficiently small. These results resolve the global Lipschitz controllability question raised by Zuazua \cite[Section~3]{Zuazua2003} in the stated geometric settings and establish its logarithmic extension within the specified one-dimensional classes.

\vspace{1em}
\noindent\textit{2020 Mathematics Subject Classification:} 35Q55, 93B05, 35A23, 35B45.\par
\smallskip
\noindent\textit{Keywords:} Schr\"odinger equation; global exact controllability; Carleman estimate; Lipschitz nonlinearity; logarithmic nonlinearity.
\end{abstract}
\vspace{\BeforeIntroductionExtraGap}

\section{Introduction}\label{sec:introduction}

Our main goal is to establish global exact controllability of semilinear
Schr\"odinger equations between arbitrary $L^2$ states in every
prescribed positive time, using localized internal additive controls.
Let $\Om\subset\R^n$, $n\geq1$, be a bounded, connected domain with
$C^\infty$ boundary, and let $\omega\subset\Om$ be a nonempty open
set. For $T>0$, set
\[
 Q=(0,T)\times\Om,\qquad
 Q_\omega=(0,T)\times\omega,\qquad
 \Sigma=(0,T)\times\partial\Om.
\]
We consider the controlled equation
\begin{equation}\label{eq:main}
 \begin{cases}
 i\partial_t y=\Delta y+f(y)+\one_\omega h&\text{in }Q,\\
 y=0&\text{on }\Sigma,\\
 y(0)=y_0&\text{in }\Om.
 \end{cases}
\end{equation}
The state $y$ and control $h$ are complex valued, and
$\one_\omega h$ denotes the extension of $h$ by zero to $\Om$.
More precisely, for every prescribed $T>0$ and every pair
$y_0,y_1\in L^2(\Om;\C)$, we seek a control
$h\in L^2(Q_\omega;\C)$ such that a solution of \eqref{eq:main}
satisfies $y(T)=y_1$. The time $T$ is fixed independently of the
endpoint norms. We treat globally Lipschitz nonlinearities in
arbitrary dimension and nonlinearities with logarithmic growth in
one dimension.

The questions raised in \cite[Section~3]{Zuazua2003} concern the
passage from sufficiently small Lipschitz perturbations of the free
equation to general globally Lipschitz nonlinearities, and its extension
to logarithmically superlinear growth in arbitrary positive time;
see also the discussion in \cite[Section~6.2.2]{Laurent2014}.
For \eqref{eq:main}, these questions ask whether, under suitable geometric
assumptions on $(\Om,\omega)$, arbitrary $L^2(\Om;\C)$ states can
be joined in every prescribed positive time in the following cases:
\begin{enumerate}
	\item [\textbf{(a)}] $f:\mathbb{C}\to\mathbb{C}$ is globally Lipschitz, with no smallness restriction imposed on its Lipschitz constant;
	\item [\textbf{(b)}] $f$ exhibits logarithmic superlinear growth, including the example $f(z)=\lambda z\log|z|^2$ for $z\ne0$, with $f(0)=0$ and $\lambda\in\mathbb{R}$, in the sense of a suitable low-regularity solution concept.
\end{enumerate}

The difficulty in addressing the above two questions lies in closing the nonlinear control argument for arbitrary Lipschitz constants and endpoint norms. Recall that a contraction argument based on fixed unweighted estimates necessitates a small Lipschitz constant. An alternative is to freeze the nonlinearity, which yields bounded time-dependent potentials; this approach requires uniform observability together with a nonlinear fixed-point construction at the $L^2$ regularity level \cite[(3.7)--(3.8)]{Zuazua2003}. Genuinely different from parabolic equations, the unitary Schr\"odinger group provides no Sobolev regularization for $L^2$ solutions, rendering the standard compactness argument inapplicable. In Section~\ref{subsec:related-literature}, we shall compare this approach and existing controllability results with the weighted contraction argument developed herein.

The main contribution of this work is to provide an affirmative answer to the above two questions under a local convex-phase condition. More precisely, our first main result, Theorem~\ref{thm:main}, resolves question \textbf{(a)} on domains satisfying a local convex-phase condition near the uncontrolled region, and on a spatial interval with any nonempty open control region. This result eliminates the smallness restriction inherent in elementary perturbation arguments, allowing for arbitrary $L^2$ terminal states and all time horizons $T>0$. In particular, no differentiability, gauge invariance, or conservation law assumptions are imposed on the nonlinearity $f$. Theorems~\ref{thm:log-main} and~\ref{thm:log-critical-small} answer question \textbf{(b)} on a spatial interval for the logarithmic nonlinearity classes specified hereafter. In particular, these results cover $f(z)=\lambda z\log|z|^2$ for all $\lambda\in\mathbb R$, as well as arbitrary globally Lipschitz perturbations. Analogous to the Lipschitz setting, the control time is independent of the terminal-state norms.  
 
\subsection{Comparison with existing works}\label{subsec:related-literature}

We shall compare our results with closely related works on control and
observability for Schr\"odinger equations, and the comparison mainly concerns
the nonlinearities, endpoint spaces, geometric assumptions and
dependence of the control time on the data.

For the cubic Schr\"odinger equation on a periodic interval,
Illner et~al.~\cite{IllnerLangeTeismann2006} established local exact
controllability in $H^1$. In the case of a bounded interval with
periodic, Dirichlet or Neumann boundary conditions, Rosier and
Zhang~\cite{RosierZhang2009local} obtained local exact controllability
in $L^2$. They also treated polynomial nonlinearities on rectangles
in compatible Sobolev spaces, where the required regularity depends
on the dimension, degree and boundary conditions
\cite{RosierZhang2010}. In these results, the positive control time is prescribed first,
and the endpoints must lie in a sufficiently small neighborhood of zero. Moreover, the rectangular
theory allows any nonempty open internal control region, including
regions that do not satisfy the geometric control condition (GCC).
For the globally Lipschitz class considered here,
Theorem~\ref{thm:main} establishes exact controllability between
arbitrary $L^2$ states without restricting their norms or the
Lipschitz constant.

Global exact controllability has also been obtained by combining
stabilization with local control. On an
interval, Laurent~\cite{Laurent2010interval} proved global exact
controllability of the cubic equation with either sign of the
nonlinearity in $L^2$, using any nonempty open control region.
More precisely, for each $R>0$, a common control time is obtained
for all endpoints with norm at most $R$. For the defocusing equation
on compact surfaces, Dehman et~al.~\cite{DehmanGerardLebeau2006}
established the corresponding $H^1$ theory under GCC and an additional
unique-continuation assumption. In dimension three,
Laurent~\cite{Laurent2010manifolds} proved global exact
controllability for the defocusing cubic equation in $H^1$ under
these assumptions and a spectral bilinear estimate; the result
also covers cuboids by reflection.

Concerning the dependence of the control time on the size of the
data, Le Balc'h and Martin~\cite{LeBalchMartin2025} obtained uniform
exponential energy stabilization for the defocusing cubic equation
on tori of dimension at most three, provided that the control region
contains a strip in each coordinate direction. Their estimate gives
a logarithmic upper bound for the null-control time in terms of
the initial energy. Under GCC, Loyola~\cite{Loyola2025} removed
the additional nonlinear unique-continuation assumption for
gauge-invariant defocusing polynomial models on compact surfaces
and defocusing cubic models on the specified three-dimensional
manifolds, obtaining semiglobal $H^1$ exact controllability.
Thus, for each bounded set of endpoints, these results provide a
common sufficiently large control time. In Theorem~\ref{thm:main}, the time $T>0$ is prescribed
independently of the endpoint norms for every globally Lipschitz
map $f:\C\to\C$, including $f(0)\ne0$, without requiring
differentiability or a conservation law.

In addition to the nonlinear assumptions and control time, the
geometric conditions in these results differ from those used here.
The above manifold results rely on GCC, which concerns the geodesic
flow, whereas Assumption~\ref{ass:phase-geometry} requires a smooth
real phase with a nonvanishing gradient and a positive definite
Hessian near $K_\omega=\overline{\Om\setminus\omega}^{\,\overline\Om}$,
with $\partial_\nu\psi=0$ on the relevant boundary neighborhood.
Thus the phase is required to be convex and noncritical only near
the uncontrolled set. Under the corresponding covariant condition,
Proposition~\ref{prop:riemannian-control} gives exact control between
arbitrary $L^2$ states on compact Riemannian manifolds in every
prescribed positive time. On an interval,
Theorem~\ref{thm:main}(ii) allows every nonempty open control region
without a phase assumption.

To pass from linear to nonlinear control by freezing the
nonlinearity, one also needs to track the dependence of observability
estimates on the potential. Bourgain
et~al.~\cite{BourgainBurqZworski2013} proved observability and
additive controllability on two-dimensional tori for a fixed real,
time-independent $L^2$ potential, from any nonempty open set in
every positive time. On $\R$, Su et~al.~\cite{SuSunYuan2025}
obtained quantitative observability for real, time-independent
bounded potentials on thick sets, with a bound of the form
$Ce^{C/T^2}$, where the constants are controlled by the potential
norm and thickness parameters. These estimates concern fixed spatial potentials. After separating
$f(0)$, however, freezing a globally Lipschitz nonlinearity gives a
family of bounded time-dependent potentials that need not be real.
An estimate for a fixed potential therefore does not by itself give
the uniform observability required for this family.

For this approach, Zuazua~\cite[(3.7)--(3.8)]{Zuazua2003}
identified two requirements: uniform observability and compactness of
the nonlinear fixed-point map. The compactness issue arises because the
unitary Schr\"odinger group provides no Sobolev regularization of
$L^2$ data, and hence the compactness argument based on parabolic
smoothing does not apply. To address the nonlinear step, Zuazua
also suggested the global inverse function approach used
by Lasiecka and Triggiani~\cite{LasieckaTriggiani1991}.
For the observability step, Lasiecka
et~al.~\cite{LasieckaTriggianiZhang2004a,LasieckaTriggianiZhang2004b}
established Carleman estimates for nonconservative Schr\"odinger
equations at the $H^1$ and $L^2$ levels, and Triggiani and
Xu~\cite{TriggianiXu2007} developed an $H^1$ theory on Riemannian
manifolds. In the present work, a parameter-uniform zeroth-order
Carleman estimate allows us to decrease the source-to-state gain
by choosing the parameter sufficiently large. This makes the
nonlinear correction contractive in weighted $L^2$ spaces for every
finite Lipschitz constant and yields the control cost
$Ce^{C(T^{-1}+L^{2/3})}$ in
\eqref{eq:unified-lipschitz-cost}, without requiring compactness
of the fixed-point map.

For logarithmic nonlinearities, Beauchard et~al.~\cite{BCP2026}
studied $f(z)=\lambda z\log|z|^2$ for arbitrary real $\lambda$
on tori and Euclidean spaces. Using finitely many real bilinear
potentials, they established global $L^2$ approximate
controllability between arbitrary states on the unit sphere,
up to a constant phase, in arbitrarily short times.
Theorems~\ref{thm:log-main} and~\ref{thm:log-critical-small}
concern localized additive controls on an interval, where any
nonempty open control region is allowed and arbitrary $L^2$
endpoints can be joined exactly in every prescribed time $T>0$.
Moreover, these theorems apply to
$f(z)=z\phi(|z|)+\mathfrak g(z)$, where
$\phi\in W^{1,\infty}_{\mathrm{loc}}((0,\infty);\R)$ and
$\mathfrak g$ is any globally Lipschitz map, with the radial term
extended by zero at the origin. Under
$r|\phi'(r)|\leq D(1+|\log r|)^\gamma$, every finite $D$ is
allowed when $0\leq\gamma<1/2$. At the critical exponent,
the condition $r|\phi'(r)|\leq D_0+D_1\sqrt{1+|\log r|}$
requires only $D_1$ to be sufficiently small, while $D_0$ and
the Lipschitz perturbation remain unrestricted. In particular,
the canonical logarithmic term is covered for every real
coefficient, and the controlled solutions belong to
$C([0,T];L^2)\cap L^4(Q)$ with $f(y)\in L^2(Q)$.
The estimates \eqref{eq:log-main-cost} and
\eqref{eq:critical-main-cost} quantify the dependence of the
control cost on time and endpoint size; for the canonical term,
\eqref{eq:canonical-log-fast-cost} gives a linear dependence
on the endpoint norms.

\FloatBarrier
\Needspace{7\baselineskip}

\subsection{Main results}
Let us first state the assumptions on the nonlinearity and the control
geometry for \eqref{eq:main}.  
\begin{assumption}\label{ass:lip}
	The function $f:\C\to\C$ is globally Lipschitz: for some finite
	$L>0$,
	\begin{equation}\label{eq:lip}
		|f(z)-f(w)|\leq L|z-w|
		\qquad\text{for all }z,w\in\C.
	\end{equation} 
\end{assumption}

\begin{assumption}\label{ass:phase-geometry}
	Let $K_\omega=\overline{\Om\setminus\omega}^{\,\overline\Om}$.
	There exist a relatively open neighborhood $\mathcal O$ of
	$K_\omega$ in $\overline\Om$, a real-valued function $\psi$
	smooth on a neighborhood of $\overline\Om$, and constants
	$c_0,c_1>0$ such that
	\begin{equation}\label{eq:phase-geometry}
		|\nabla\psi|\geq c_0,\quad D^2\psi\geq c_1I
		\quad\text{on }\mathcal O,
		\qquad
		\partial_\nu\psi=0
		\quad\text{on }\mathcal O\cap\partial\Om.
	\end{equation}
	Here $\nu$ is the outward unit normal to $\partial\Om$, and
	the Hessian inequality is understood in the sense of quadratic forms.
\end{assumption}

For the Euclidean Dirichlet problem, set
\[
 A=-\Delta_D\quad\text{on }H:=L^2(\Om;\C),\qquad
 D(A)=H^2(\Om;\C)\cap H^1_0(\Om;\C),\qquad U(t)=e^{itA}.
\]
The boundary condition is encoded in $A$, and the Schr\"odinger
operator is given by
$
 P=i\partial_t-\Delta=i\partial_t+A.
$
A mild solution of \eqref{eq:main} is a function
$y\in C([0,T];H)$ satisfying
\begin{equation}\label{eq:mild}
 y(t)=U(t)y_0-i\int_0^tU(t-r)
             \bigl(f(y(r))+\one_\omega h(r)\bigr)\,\dd r.
\end{equation}
We use the same symbol $f$ for its Nemytskii map on $H$, defined by
$f(v)(x)=f(v(x))$. Since $\Om$ has finite measure, \eqref{eq:lip}
implies that this map is globally Lipschitz on $H$ and that
\[
 \|f(v)\|_H\leq L\|v\|_H+|\Om|^{1/2}|f(0)|.
\]
It follows that, for every $y_0\in H$ and
$h\in L^2(Q_\omega;\C)$, equation \eqref{eq:main} has a unique mild solution
$y\in C([0,T];H)$.

\begin{definition} \label{def}
The system \eqref{eq:main} is said to be \textit{globally exactly controllable} in $H$ if, for any $T>0$ and all endpoints $y_0,y_1\in H$, there exists a control $h\in L^2(Q_\omega;\mathbb{C})$ such that the corresponding mild solution $y$ of \eqref{eq:main} satisfies $y(T)=y_1$.

\end{definition}

We now state the first main result for globally Lipschitz nonlinearities.

\begin{theorem}\label{thm:main}
Let $f$ satisfy Assumption~\ref{ass:lip}, and assume that one of
the following holds:
\begin{enumerate}
 \item $\Om$ and $\omega$ satisfy
       Assumption~\ref{ass:phase-geometry};
 \item $\Om=(0,\ell)$ for some $\ell>0$, and
       $\omega\subset(0,\ell)$ is any nonempty open set.
\end{enumerate}
Then \eqref{eq:main} is globally exactly controllable in the sense
of Definition~\ref{def}. 
Moreover, the control may be chosen so that
\begin{equation}\label{eq:unified-lipschitz-cost}
 \|h\|_{L^2(Q_\omega)}+\|y\|_{C([0,T];H)}
 \leq C e^{C(T^{-1}+L^{2/3})}
       \bigl(\|y_0\|_H+\|y_1\|_H
             +T|\Om|^{1/2}|f(0)|\bigr).
\end{equation}
The constant $C$ depends only on the fixed spatial geometry,
the control region, and the phase data. It is
independent of $T$, $L$, $f$ and the endpoints.  
\end{theorem}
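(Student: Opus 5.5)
The plan is to close the argument with a weighted contraction built on a parameter-uniform zeroth-order Carleman estimate, rather than by freezing the nonlinearity and using compactness.

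\textbf{Step 1: the linear Carleman estimate.} Under Assumption~\ref{ass:phase-geometry}, or on an interval, I would first prove an $L^2$ Carleman estimate for $P=i\partial_t+A$. The weight is $e^{s\varphi}$ with $\varphi(t,x)=\bigl(e^{\lambda\psi(x)}-e^{2\lambda\|\psi\|_\infty}\bigr)/(t(T-t))$, or a similar choice, blowing up at $t=0,T$. The estimate reads
\[
 s^{3/2}\norm{\theta^{3/2} e^{s\varphi}w}_{L^2(Q)}\leq C\bigl(\norm{e^{s\varphi}Pw}_{L^2(Q)}+s^{3/2}\norm{\theta^{3/2}e^{s\varphi}w}_{L^2(Q_\omega)}\bigr),\qquad \theta=\frac{1}{t(T-t)},
\]
for all $s\geq s_0(1+T^{-1})$. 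It would be obtained by duality from the usual $H^1$ Carleman estimate (conjugation, symmetric/antisymmetric splitting, and positivity from $D^2\psi\geq c_1I$, $|\nabla\psi|\geq c_0$ on $\mathcal O$). The Neumann condition $\partial_\nu\psi=0$ kills the boundary terms near $K_\omega$; away from $\mathcal O$ everything lies inside $\omega$ and is absorbed by a cutoff.

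\textbf{Step 2: a weighted linear control operator.} By a Fursikov--Imanuvilov/Lions duality (minimizing a weighted functional), for each source $F$ with $e^{-s\varphi}\theta^{-3/2}F\in L^2(Q)$ I get a control $h=\mathcal L_s(y_0,y_1,F)$ steering the linear equation $i\partial_t y=\Delta y+F+\one_\omega h$ from $y_0$ to $y_1$. The key output is the gain
\[
 \norm{\theta^{-3/2}e^{-s\varphi}y}_{L^2(Q)}\leq C s^{-3/2}\norm{\theta^{-3/2}e^{-s\varphi}F}\cdot(\text{weights})+C_{s}(\norm{y_0}+\norm{y_1}).
\]
That is, the source-to-state map has norm $O(s^{-3/2})$ in the weighted space $\mathcal Y_s$. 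The endpoints are handled by first subtracting free evolutions cut off in time, so that they become sources supported away from $t=0,T$, where the weight is harmless.

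\textbf{Step 3: the fixed point.} Define $\Lambda(z)=y$, the controlled linear solution with source $F=f(z)$. Then $\norm{f(z_1)-f(z_2)}\leq L\norm{z_1-z_2}$ pointwise, and the weights on source and state match up to $\theta$-powers. The $\theta$-powers can be absorbed because the weights degenerate in the same way, so the Lipschitz constant of $\Lambda$ is at most $CLs^{-3/2}$ (with $T$ factors). Choosing $s\sim s_0(T^{-1}+L^{2/3})$ makes $\Lambda$ a contraction on $\mathcal Y_s$, with no compactness needed. The constant $f(0)$ enters as a source and gives the term $T|\Om|^{1/2}|f(0)|$. Since $y(T)=y_1$ comes from the linear construction, the fixed point solves \eqref{eq:mild} exactly. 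Undoing the weights costs $e^{Cs}$, which gives $e^{C(T^{-1}+L^{2/3})}$. The $C([0,T];H)$ bound follows from Duhamel plus Gronwall-free energy estimates, since the source and control are now in $L^2$.

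\textbf{Main obstacle and case (ii).} The main obstacle is Step 1/2: keeping exactly the $s^{3/2}$ gain at the zeroth-order $L^2$ level uniformly in $s$, with the weights for state and source compatible; this is where I would expect the delicate work. For case (ii), I would construct $\psi$ on $(0,\ell)$ with $\psi''>0$, $|\psi'|\geq c_0$ off a subinterval of $\omega$, and $\psi'=0$ at the endpoints. If $\omega$ does not contain a neighborhood of the critical points, I would place the critical point inside $\omega$ and use a piecewise/glued weight (two convex phases patched inside $\omega$). The endpoint Neumann condition needs $\psi'(0)=\psi'(\ell)=0$, which forces $|\nabla\psi|$ to vanish at the endpoints. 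I would handle this either by odd reflection to $(-\ell,\ell)$ or the circle, reducing to a periodic problem where a noncritical convex-type phase near $K_\omega$ exists, or by a local Carleman estimate near each endpoint with a boundary-adapted weight.
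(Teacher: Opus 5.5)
Your Step~3 fixed point is not well defined in your weighted spaces, so the proposal has a genuine gap. Your weight $e^{-s\varphi}$ blows up like $e^{cs/t}$ and $e^{cs/(T-t)}$ at the temporal endpoints. Two consequences follow.

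\begin{itemize}
\item Any state with $y(0)=y_0\neq0$ has infinite $\Ys$-norm. Your gain inequality therefore has an infinite left side, and ``contraction on $\Ys$'' cannot refer to the full state.
\item More seriously, the source $F=f(z)$ is not admissible. Near $t=0$ one has $z\approx y_0$, so $f(z)$ (or simply the constant $f(0)$) is of unit size there and $\int_Q e^{-2s\varphi}\theta^{-3}|f(z)|^2=\infty$. The duality construction of the linear control operator needs exactly this weighted norm to be finite.
\end{itemize}

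Subtracting \emph{linear} free evolutions only moves the linear endpoint data to the middle of $[0,T]$; it does nothing to the nonlinear term near $t=0,T$. The missing idea is to bridge with the \emph{nonlinear} uncontrolled flows, as in Lemma~\ref{lem:nonlinear-bridge}:
\begin{itemize}
\item Take $u^0$ solving $Pu^0=f(u^0)$ forward from $y_0$, and $u^1$ solving it backward from $y_1$; both are global by the Lipschitz bound and Gronwall.
\item Set $r=\eta u^0+(1-\eta)u^1$. Then $R=f(r)-Pr=f(r)-\eta f(u^0)-(1-\eta)f(u^1)-i\eta'(u^0-u^1)$ vanishes on $[0,T/3]\cup[2T/3,T]$.
\item Seek $y=r+v$ with $v(0)=v(T)=0$, and run the contraction on the source $F=R+f(r+v_F)-f(r)$. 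This lies in $\Ss$ because $|f(r+v)-f(r)|\le L|v|$ pointwise, and the contraction factor is $L\sqrt{C_{\rm Car}}\,m_s^{-3/2}$ with $m_s=4s/T^2$.
\end{itemize}
This is also what makes $f(0)\neq0$ harmless.

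With this repair the bridge grows like $e^{LT}$, so a single interval gives cost $e^{LT+C_T(1+L^{2/3})}$, not the stated bound. To obtain $Ce^{C(T^{-1}+L^{2/3})}$ with $C$ independent of $T$ you need two more ingredients.
\begin{itemize}
\item A Carleman estimate valid for $s\ge aT+bT^2$ with $T$-independent constants, i.e.\ $T/s$ small and $\inf_t\kappa=4s/T^2$ large. Your threshold $s\ge s_0(1+T^{-1})$ fails this for large $T$. Also, undoing the weights costs $e^{C\inf_t\kappa}=e^{4Cs/T^2}$, not $e^{Cs}$.
\item A subdivision of $[0,T]$ into $J=\max\{1,\lceil T(1+L)^{1/3}\rceil\}$ pieces of length $\tau$, steering $y_0\to0\to\cdots\to0\to y_1$. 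Then $L\tau\le(1+L)^{2/3}$ and $\tau^{-1}\le T^{-1}+(1+L)^{1/3}$.
\end{itemize}

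Two further points concern Step~1 and case~(ii).
\begin{itemize}
\item In Step~1, the cutoff commutator $-2\nabla\chi\cdot\nabla p$ produces a gradient observation on $\omega$. For Schr\"odinger this cannot be traded for $\|p\|_{L^2(Q_\omega)}$ by a local energy estimate, so some order reduction is needed. The paper applies the strong estimate to $\chi_2S_\kappa(e^{\kappa\psi}\chi p)$ with $S_\kappa=\kappa(\kappa^2+A)^{-1/2}$. It uses $\|S_\kappa\operatorname{div}\|\le\kappa$, commutators of size $O(\kappa)$, and tangency of $\rho\nabla\psi$ so that the transport field preserves the Dirichlet condition.
\item On an interval, Assumption~\ref{ass:phase-geometry} fails whenever an endpoint lies in $K_\omega$, since it would require both $\psi'=0$ and $|\psi'|\ge c_0$ there. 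Glued convex phases therefore cannot help. Reflection does work, but only if applied to the linear adjoint test functions (odd extension of a Dirichlet $p$, then folding the weights back). Since $f$ has no parity, the nonlinear problem itself cannot be reflected.
\end{itemize}
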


\begin{example}\label{ex:control-geometries}
Theorem~\ref{thm:main} applies to the following control geometries;
see Figure~\ref{fig:basic-control-geometries}.
\begin{enumerate}
\item $\Om=(0,\ell)$, $\ell>0$, and $\omega$ is any nonempty
open subset of $\Om$.

\item Let $\Om\subset\R^2$ be a bounded simply connected domain
with $C^\infty$ boundary, and let $\Gamma_c\subset\partial\Om$
be a nonempty open arc with
$\overline{\Gamma_c}\ne\partial\Om$.
Set $d(x)=\operatorname{dist}(x,\partial\Om)$, and let $\pi$
be the nearest-point projection in a tubular neighborhood
of $\partial\Om$. For sufficiently small $\delta>0$,
with $2\delta$ below the tubular radius, define
\begin{equation}\label{eq:planar-control-region}
 \omega=
 \{x\in\Om:\delta<d(x)<2\delta\}
 \cup
 \{x\in\Om:0<d(x)<2\delta,\ \pi(x)\in\Gamma_c\}.
\end{equation}
This region satisfies Assumption~\ref{ass:phase-geometry};
the verification is given in Appendix~\ref{app:planar-phase}.
\end{enumerate}
\end{example}

\begin{figure}[!htbp]
\centering
\begin{minipage}[c]{.46\textwidth}
\centering
\begin{tikzpicture}[x=.8cm,y=1.05cm,font=\footnotesize]
 \path[use as bounding box] (-.45,-2.05) rectangle (5.25,2.05);
 \fill[controlfill] (1.60,-.14) rectangle (3.20,.14);
 \draw[ink,line width=.8pt] (0,0)--(4.80,0);
 \draw[ink,line width=.8pt] (0,-.12)--(0,.12) (4.80,-.12)--(4.80,.12);
 \draw[controledge,line width=3pt] (1.83,0)--(2.96,0);
 \node[below=6pt] at (0,0) {$0$};
 \node[below=6pt] at (4.80,0) {$\ell$};
 \node[above=9pt,controledge] at (2.40,0) {$J\Subset\omega$};
 \node[align=center] at (2.40,1.30) {Open control region};
 \node[align=center] at (2.40,-.85) {Unobserved endpoints};
 \node[below] at (2.40,-1.74) {(a) An interval};
\end{tikzpicture}
\end{minipage}\hfill
\begin{minipage}[c]{.48\textwidth}
\centering
% A smooth, noncircular boundary with exact inward normal offsets.
% This is a schematic: the permitted tubular width is fixed by the proof.
\begin{tikzpicture}[x=1cm,y=1cm,font=\footnotesize]
 % Fixed coordinates of the original curves; no nested PGF functions.
 \path[use as bounding box] (-2.28,-2.05) rectangle (2.28,2.05);
 \fill[controlfill,even odd rule] plot coordinates {
    (1.606953,0.014099) (1.608712,0.057311) (1.607206,0.100546) (1.602440,0.143701) (1.594426,0.186670)
    (1.583189,0.229342) (1.568771,0.271596) (1.551228,0.313305) (1.530642,0.354336) (1.507120,0.394554)
    (1.480791,0.433830) (1.451812,0.472043) (1.420362,0.509086) (1.386635,0.544872) (1.350843,0.579337)
    (1.313205,0.612440) (1.273944,0.644167) (1.233279,0.674532) (1.191423,0.703573) (1.148580,0.731354)
    (1.104937,0.757959) (1.060662,0.783492) (1.015908,0.808071) (0.970805,0.831826) (0.925463,0.854891)
    (0.879975,0.877403) (0.834417,0.899493) (0.788847,0.921284) (0.743309,0.942884) (0.697834,0.964382)
    (0.652440,0.985843) (0.607131,1.007307) (0.561895,1.028789) (0.516707,1.050275) (0.471524,1.071727)
    (0.426292,1.093082) (0.380941,1.114258) (0.335397,1.135153) (0.289576,1.155648) (0.243399,1.175614)
    (0.196789,1.194904) (0.149683,1.213365) (0.102031,1.230833) (0.053801,1.247138) (0.004985,1.262107)
    (-0.044402,1.275563) (-0.094321,1.287332) (-0.144706,1.297244) (-0.195466,1.305138) (-0.246485,1.310864)
    (-0.297626,1.314284) (-0.348733,1.315281) (-0.399633,1.313755) (-0.450140,1.309630) (-0.500060,1.302853)
    (-0.549196,1.293396) (-0.597347,1.281254) (-0.644322,1.266448) (-0.689932,1.249021) (-0.734004,1.229037)
    (-0.776377,1.206579) (-0.816910,1.181748) (-0.855477,1.154657) (-0.891975,1.125431) (-0.926318,1.094203)
    (-0.958443,1.061113) (-0.988307,1.026307) (-1.015885,0.989933) (-1.041174,0.952139) (-1.064191,0.913076)
    (-1.084971,0.872892) (-1.103568,0.831733) (-1.120060,0.789739) (-1.134538,0.747044) (-1.147117,0.703773)
    (-1.157927,0.660038) (-1.167115,0.615937) (-1.174844,0.571553) (-1.181288,0.526953) (-1.186633,0.482186)
    (-1.191070,0.437283) (-1.194794,0.392263) (-1.198002,0.347131) (-1.200885,0.301886) (-1.203623,0.256524)
    (-1.206385,0.211042) (-1.209313,0.165442) (-1.212526,0.119731) (-1.216104,0.073924) (-1.220092,0.028038)
    (-1.224493,-0.017911) (-1.229268,-0.063912) (-1.234340,-0.109965) (-1.239597,-0.156083) (-1.244899,-0.202286)
    (-1.250082,-0.248603) (-1.254964,-0.295068) (-1.259350,-0.341709) (-1.263035,-0.388544) (-1.265809,-0.435579)
    (-1.267460,-0.482800) (-1.267777,-0.530172) (-1.266555,-0.577638) (-1.263598,-0.625117) (-1.258720,-0.672507)
    (-1.251752,-0.719684) (-1.242539,-0.766508) (-1.230950,-0.812824) (-1.216871,-0.858465) (-1.200211,-0.903255)
    (-1.180900,-0.947014) (-1.158892,-0.989558) (-1.134161,-1.030701) (-1.106703,-1.070258) (-1.076536,-1.108047)
    (-1.043700,-1.143887) (-1.008256,-1.177601) (-0.970288,-1.209018) (-0.929906,-1.237972) (-0.887240,-1.264309)
    (-0.842448,-1.287885) (-0.795709,-1.308573) (-0.747225,-1.326264) (-0.697217,-1.340872) (-0.645922,-1.352334)
    (-0.593590,-1.360619) (-0.540479,-1.365724) (-0.486848,-1.367677) (-0.432953,-1.366538) (-0.379043,-1.362396)
    (-0.325352,-1.355373) (-0.272096,-1.345612) (-0.219467,-1.333285) (-0.167632,-1.318581) (-0.116725,-1.301705)
    (-0.066850,-1.282874) (-0.018075,-1.262313) (0.029569,-1.240248) (0.076083,-1.216905) (0.121505,-1.192503)
    (0.165905,-1.167253) (0.209384,-1.141357) (0.252068,-1.115003) (0.294105,-1.088366) (0.335654,-1.061610)
    (0.376885,-1.034884) (0.417960,-1.008329) (0.459034,-0.982070) (0.500238,-0.956220) (0.541676,-0.930874)
    (0.583420,-0.906102) (0.625509,-0.881942) (0.667955,-0.858397) (0.710746,-0.835429) (0.753854,-0.812958)
    (0.797240,-0.790866) (0.840858,-0.769004) (0.884655,-0.747198) (0.928568,-0.725258) (0.972524,-0.702987)
    (1.016429,-0.680190) (1.060173,-0.656678) (1.103621,-0.632274) (1.146614,-0.606820) (1.188973,-0.580181)
    (1.230495,-0.552243) (1.270963,-0.522925) (1.310145,-0.492173) (1.347800,-0.459963) (1.383689,-0.426306)
    (1.417574,-0.391238) (1.449229,-0.354826) (1.478442,-0.317161) (1.505025,-0.278355) (1.528813,-0.238534)
    (1.549667,-0.197833) (1.567478,-0.156394) (1.582162,-0.114354) (1.593660,-0.071844) (1.601931,-0.028988)
    (1.606953,0.014099)
  } --cycle
  plot coordinates {
    (1.427506,0.028199) (1.428713,0.057838) (1.427679,0.087509) (1.424390,0.117277) (1.418808,0.147192)
    (1.410882,0.177283) (1.400551,0.207549) (1.387758,0.237954) (1.372463,0.268433) (1.354643,0.298894)
    (1.334307,0.329223) (1.311494,0.359301) (1.286272,0.389002) (1.258740,0.418212) (1.229019,0.446827)
    (1.197250,0.474765) (1.163588,0.501964) (1.128192,0.528392) (1.091226,0.554039) (1.052848,0.578922)
    (1.013212,0.603083) (0.972461,0.626582) (0.930730,0.649500) (0.888144,0.671929) (0.844818,0.693968)
    (0.800864,0.715720) (0.756391,0.737283) (0.711506,0.758747) (0.666317,0.780181) (0.620931,0.801637)
    (0.575455,0.823137) (0.529988,0.844676) (0.484620,0.866220) (0.439423,0.887711) (0.394450,0.909063)
    (0.349727,0.930178) (0.305262,0.950940) (0.261039,0.971229) (0.217024,0.990918) (0.173173,1.009878)
    (0.129439,1.027979) (0.085773,1.045093) (0.042135,1.061090) (-0.001502,1.075844) (-0.045151,1.089230)
    (-0.088805,1.101125) (-0.132435,1.111413) (-0.175994,1.119984) (-0.219413,1.126738) (-0.262601,1.131587)
    (-0.305454,1.134454) (-0.347852,1.135283) (-0.389663,1.134031) (-0.430751,1.130677) (-0.470977,1.125218)
    (-0.510203,1.117670) (-0.548299,1.108066) (-0.585143,1.096454) (-0.620630,1.082897) (-0.654667,1.067464)
    (-0.687181,1.050234) (-0.718115,1.031284) (-0.747430,1.010693) (-0.775102,0.988535) (-0.801118,0.964878)
    (-0.825480,0.939784) (-0.848194,0.913309) (-0.869275,0.885503) (-0.888740,0.856410) (-0.906613,0.826074)
    (-0.922921,0.794534) (-0.937695,0.761832) (-0.950976,0.728007) (-0.962811,0.693101) (-0.973260,0.657150)
    (-0.982393,0.620190) (-0.990295,0.582252) (-0.997065,0.543363) (-1.002818,0.503540) (-1.007680,0.462797)
    (-1.011795,0.421144) (-1.015314,0.378590) (-1.018401,0.335151) (-1.021223,0.290855) (-1.023946,0.245749)
    (-1.026730,0.199906) (-1.029715,0.153430) (-1.033016,0.106453) (-1.036713,0.059136) (-1.040840,0.011652)
    (-1.045387,-0.035821) (-1.050296,-0.083117) (-1.055469,-0.130093) (-1.060774,-0.176636) (-1.066057,-0.222668)
    (-1.071145,-0.268139) (-1.075862,-0.313028) (-1.080029,-0.357326) (-1.083469,-0.401034) (-1.086013,-0.444152)
    (-1.087501,-0.486675) (-1.087784,-0.528583) (-1.086724,-0.569846) (-1.084199,-0.610419) (-1.080102,-0.650244)
    (-1.074343,-0.689253) (-1.066846,-0.727370) (-1.057553,-0.764517) (-1.046421,-0.800614) (-1.033416,-0.835582)
    (-1.018517,-0.869349) (-1.001709,-0.901844) (-0.982981,-0.933003) (-0.962327,-0.962762) (-0.939740,-0.991056)
    (-0.915220,-1.017820) (-0.888769,-1.042980) (-0.860398,-1.066455) (-0.830130,-1.088156) (-0.798001,-1.107988)
    (-0.764070,-1.125846) (-0.728415,-1.141626) (-0.691138,-1.155226) (-0.652365,-1.166549) (-0.612243,-1.175513)
    (-0.570940,-1.182050) (-0.528637,-1.186114) (-0.485529,-1.187682) (-0.441811,-1.186756) (-0.397681,-1.183364)
    (-0.353328,-1.177560) (-0.308928,-1.169421) (-0.264643,-1.159046) (-0.220607,-1.146553) (-0.176933,-1.132073)
    (-0.133701,-1.115748) (-0.090961,-1.097730) (-0.048729,-1.078170) (-0.006989,-1.057220) (0.034310,-1.035032)
    (0.075249,-1.011750) (0.115935,-0.987516) (0.156501,-0.962469) (0.197093,-0.936746) (0.237865,-0.910490)
    (0.278965,-0.883848) (0.320526,-0.856979) (0.362647,-0.830052) (0.405384,-0.803241) (0.448740,-0.776722)
    (0.492665,-0.750655) (0.537055,-0.725175) (0.581767,-0.700373) (0.626637,-0.676289) (0.671491,-0.652907)
    (0.716167,-0.630158) (0.760521,-0.607927) (0.804428,-0.586065) (0.847782,-0.564405) (0.890490,-0.542768)
    (0.932459,-0.520977) (0.973597,-0.498866) (1.013798,-0.476287) (1.052945,-0.453112) (1.090905,-0.429241)
    (1.127530,-0.404601) (1.162662,-0.379152) (1.196133,-0.352884) (1.227777,-0.325820) (1.257432,-0.298013)
    (1.284947,-0.269541) (1.310192,-0.240507) (1.333061,-0.211028) (1.353479,-0.181228) (1.371401,-0.151232)
    (1.386816,-0.121155) (1.399742,-0.091090) (1.410219,-0.061104) (1.418301,-0.031231) (1.424047,-0.001474)
    (1.427506,0.028199)
  } --cycle;
 \fill[controlfill] plot coordinates {
    (1.755497,0.281401) (1.748258,0.304138) (1.740343,0.326684) (1.731764,0.349021) (1.722537,0.371135)
    (1.712675,0.393012) (1.702193,0.414638) (1.691109,0.435999) (1.679440,0.457083) (1.667203,0.477879)
    (1.654417,0.498377) (1.641101,0.518565) (1.627275,0.538436) (1.612958,0.557981) (1.598172,0.577194)
    (1.582937,0.596067) (1.567275,0.614597) (1.551206,0.632777) (1.534752,0.650606) (1.517935,0.668080)
    (1.500777,0.685199) (1.483299,0.701961) (1.465523,0.718366) (1.447470,0.734417) (1.429161,0.750115)
  }
  --plot coordinates {
    (1.197250,0.474765) (1.210727,0.463211) (1.223862,0.451532) (1.236647,0.439734) (1.249070,0.427821)
    (1.261120,0.415799) (1.272788,0.403676) (1.284065,0.391457) (1.294942,0.379151) (1.305411,0.366766)
    (1.315466,0.354311) (1.325099,0.341793) (1.334307,0.329223) (1.343085,0.316610) (1.351428,0.303961)
    (1.359335,0.291286) (1.366803,0.278594) (1.373834,0.265893) (1.380425,0.253190) (1.386580,0.240492)
    (1.392299,0.227807) (1.397585,0.215139) (1.402442,0.202493) (1.406873,0.189874) (1.410882,0.177283)
  } --cycle;
 \draw[ink,line width=.8pt] plot coordinates {
    (1.786400,0.000000) (1.788712,0.056785) (1.786734,0.113582) (1.780490,0.170125) (1.770043,0.226148)
    (1.755497,0.281401) (1.736990,0.335644) (1.714697,0.388656) (1.688822,0.440238) (1.659596,0.490214)
    (1.627275,0.538436) (1.592131,0.584784) (1.554451,0.629169) (1.514530,0.671533) (1.472668,0.711847)
    (1.429161,0.750115) (1.384300,0.786370) (1.338365,0.820672) (1.291621,0.853107) (1.244313,0.883785)
    (1.196662,0.912835) (1.148864,0.940401) (1.101086,0.966642) (1.053466,0.991723) (1.006108,1.015815)
    (0.959086,1.039086) (0.912442,1.061703) (0.866187,1.083822) (0.820301,1.105587) (0.774737,1.127127)
    (0.729426,1.148548) (0.684274,1.169939) (0.639170,1.191358) (0.593990,1.212840) (0.548599,1.234391)
    (0.502856,1.255987) (0.456620,1.277576) (0.409754,1.299076) (0.362128,1.320379) (0.313624,1.341350)
    (0.264140,1.361829) (0.213594,1.381637) (0.161927,1.400575) (0.109105,1.418432) (0.055122,1.434983)
    (0.000000,1.450000) (-0.056208,1.463250) (-0.113418,1.474504) (-0.171519,1.483538) (-0.230368,1.490141)
    (-0.289798,1.494114) (-0.349614,1.495278) (-0.409603,1.493479) (-0.469529,1.488583) (-0.529144,1.480488)
    (-0.588188,1.469122) (-0.646396,1.454442) (-0.703500,1.436442) (-0.759234,1.415145) (-0.813340,1.390609)
    (-0.865574,1.362925) (-0.915705,1.332213) (-0.963525,1.298622) (-1.008848,1.262327) (-1.051518,1.223528)
    (-1.091406,1.182443) (-1.128419,1.139305) (-1.162495,1.094362) (-1.193608,1.047867) (-1.221768,1.000077)
    (-1.247020,0.951249) (-1.269442,0.901633) (-1.289143,0.851470) (-1.306265,0.800988) (-1.320974,0.750396)
    (-1.333461,0.699885) (-1.343935,0.649621) (-1.352623,0.599744) (-1.359759,0.550367) (-1.365585,0.501575)
    (-1.370344,0.453422) (-1.374274,0.405935) (-1.377603,0.359110) (-1.380546,0.312917) (-1.383301,0.267300)
    (-1.386040,0.222178) (-1.388912,0.177454) (-1.392035,0.133008) (-1.395496,0.088711) (-1.399345,0.044424)
    (-1.403600,0.000000) (-1.408241,-0.044706) (-1.413211,-0.089838) (-1.418419,-0.135529) (-1.423741,-0.181903)
    (-1.429019,-0.229068) (-1.434066,-0.277109) (-1.438671,-0.326092) (-1.442601,-0.376054) (-1.445605,-0.427005)
    (-1.447418,-0.478925) (-1.447770,-0.531761) (-1.446386,-0.585430) (-1.442997,-0.639815) (-1.437338,-0.694769)
    (-1.429161,-0.750115) (-1.418233,-0.805646) (-1.404347,-0.861131) (-1.387321,-0.916316) (-1.367005,-0.970929)
    (-1.343282,-1.024680) (-1.316074,-1.077271) (-1.285340,-1.128398) (-1.251079,-1.177755) (-1.213332,-1.225038)
    (-1.172180,-1.269954) (-1.127743,-1.312222) (-1.080179,-1.351580) (-1.029682,-1.387788) (-0.976479,-1.420631)
    (-0.920826,-1.449925) (-0.863003,-1.475521) (-0.803312,-1.497303) (-0.742069,-1.515194) (-0.679601,-1.529155)
    (-0.616241,-1.539188) (-0.552320,-1.545334) (-0.488167,-1.547672) (-0.424095,-1.546320) (-0.360405,-1.541429)
    (-0.297376,-1.533185) (-0.235263,-1.521804) (-0.174292,-1.507524) (-0.114657,-1.490609) (-0.056518,-1.471337)
    (0.000000,-1.450000) (0.054811,-1.426896) (0.107866,-1.402327) (0.159154,-1.376590) (0.208699,-1.349974)
    (0.256562,-1.322757) (0.302833,-1.295199) (0.347636,-1.267538) (0.391117,-1.239987) (0.433444,-1.212730)
    (0.474804,-1.185920) (0.515394,-1.159678) (0.555421,-1.134088) (0.595092,-1.109200) (0.634611,-1.085027)
    (0.674174,-1.061548) (0.713963,-1.038709) (0.754143,-1.016421) (0.794856,-0.994569) (0.836218,-0.973009)
    (0.878313,-0.951575) (0.921195,-0.930082) (0.964881,-0.908331) (1.009354,-0.886111) (1.054558,-0.863207)
    (1.100399,-0.839404) (1.146749,-0.814490) (1.193443,-0.788261) (1.240283,-0.760528) (1.287040,-0.731120)
    (1.333461,-0.699885) (1.379265,-0.666698) (1.424156,-0.631461) (1.467824,-0.594107) (1.509947,-0.554598)
    (1.550202,-0.512934) (1.588266,-0.469144) (1.623824,-0.423294) (1.656572,-0.375482) (1.686225,-0.325835)
    (1.712518,-0.274512) (1.735214,-0.221699) (1.754105,-0.167604) (1.769018,-0.112456) (1.779816,-0.056502)
    (1.786400,0.000000)
  } --cycle;
 \draw[controledge,line width=.45pt] plot coordinates {
    (1.427506,0.028199) (1.428713,0.057838) (1.427679,0.087509) (1.424390,0.117277) (1.418808,0.147192)
    (1.410882,0.177283) (1.400551,0.207549) (1.387758,0.237954) (1.372463,0.268433) (1.354643,0.298894)
    (1.334307,0.329223) (1.311494,0.359301) (1.286272,0.389002) (1.258740,0.418212) (1.229019,0.446827)
    (1.197250,0.474765) (1.163588,0.501964) (1.128192,0.528392) (1.091226,0.554039) (1.052848,0.578922)
    (1.013212,0.603083) (0.972461,0.626582) (0.930730,0.649500) (0.888144,0.671929) (0.844818,0.693968)
    (0.800864,0.715720) (0.756391,0.737283) (0.711506,0.758747) (0.666317,0.780181) (0.620931,0.801637)
    (0.575455,0.823137) (0.529988,0.844676) (0.484620,0.866220) (0.439423,0.887711) (0.394450,0.909063)
    (0.349727,0.930178) (0.305262,0.950940) (0.261039,0.971229) (0.217024,0.990918) (0.173173,1.009878)
    (0.129439,1.027979) (0.085773,1.045093) (0.042135,1.061090) (-0.001502,1.075844) (-0.045151,1.089230)
    (-0.088805,1.101125) (-0.132435,1.111413) (-0.175994,1.119984) (-0.219413,1.126738) (-0.262601,1.131587)
    (-0.305454,1.134454) (-0.347852,1.135283) (-0.389663,1.134031) (-0.430751,1.130677) (-0.470977,1.125218)
    (-0.510203,1.117670) (-0.548299,1.108066) (-0.585143,1.096454) (-0.620630,1.082897) (-0.654667,1.067464)
    (-0.687181,1.050234) (-0.718115,1.031284) (-0.747430,1.010693) (-0.775102,0.988535) (-0.801118,0.964878)
    (-0.825480,0.939784) (-0.848194,0.913309) (-0.869275,0.885503) (-0.888740,0.856410) (-0.906613,0.826074)
    (-0.922921,0.794534) (-0.937695,0.761832) (-0.950976,0.728007) (-0.962811,0.693101) (-0.973260,0.657150)
    (-0.982393,0.620190) (-0.990295,0.582252) (-0.997065,0.543363) (-1.002818,0.503540) (-1.007680,0.462797)
    (-1.011795,0.421144) (-1.015314,0.378590) (-1.018401,0.335151) (-1.021223,0.290855) (-1.023946,0.245749)
    (-1.026730,0.199906) (-1.029715,0.153430) (-1.033016,0.106453) (-1.036713,0.059136) (-1.040840,0.011652)
    (-1.045387,-0.035821) (-1.050296,-0.083117) (-1.055469,-0.130093) (-1.060774,-0.176636) (-1.066057,-0.222668)
    (-1.071145,-0.268139) (-1.075862,-0.313028) (-1.080029,-0.357326) (-1.083469,-0.401034) (-1.086013,-0.444152)
    (-1.087501,-0.486675) (-1.087784,-0.528583) (-1.086724,-0.569846) (-1.084199,-0.610419) (-1.080102,-0.650244)
    (-1.074343,-0.689253) (-1.066846,-0.727370) (-1.057553,-0.764517) (-1.046421,-0.800614) (-1.033416,-0.835582)
    (-1.018517,-0.869349) (-1.001709,-0.901844) (-0.982981,-0.933003) (-0.962327,-0.962762) (-0.939740,-0.991056)
    (-0.915220,-1.017820) (-0.888769,-1.042980) (-0.860398,-1.066455) (-0.830130,-1.088156) (-0.798001,-1.107988)
    (-0.764070,-1.125846) (-0.728415,-1.141626) (-0.691138,-1.155226) (-0.652365,-1.166549) (-0.612243,-1.175513)
    (-0.570940,-1.182050) (-0.528637,-1.186114) (-0.485529,-1.187682) (-0.441811,-1.186756) (-0.397681,-1.183364)
    (-0.353328,-1.177560) (-0.308928,-1.169421) (-0.264643,-1.159046) (-0.220607,-1.146553) (-0.176933,-1.132073)
    (-0.133701,-1.115748) (-0.090961,-1.097730) (-0.048729,-1.078170) (-0.006989,-1.057220) (0.034310,-1.035032)
    (0.075249,-1.011750) (0.115935,-0.987516) (0.156501,-0.962469) (0.197093,-0.936746) (0.237865,-0.910490)
    (0.278965,-0.883848) (0.320526,-0.856979) (0.362647,-0.830052) (0.405384,-0.803241) (0.448740,-0.776722)
    (0.492665,-0.750655) (0.537055,-0.725175) (0.581767,-0.700373) (0.626637,-0.676289) (0.671491,-0.652907)
    (0.716167,-0.630158) (0.760521,-0.607927) (0.804428,-0.586065) (0.847782,-0.564405) (0.890490,-0.542768)
    (0.932459,-0.520977) (0.973597,-0.498866) (1.013798,-0.476287) (1.052945,-0.453112) (1.090905,-0.429241)
    (1.127530,-0.404601) (1.162662,-0.379152) (1.196133,-0.352884) (1.227777,-0.325820) (1.257432,-0.298013)
    (1.284947,-0.269541) (1.310192,-0.240507) (1.333061,-0.211028) (1.353479,-0.181228) (1.371401,-0.151232)
    (1.386816,-0.121155) (1.399742,-0.091090) (1.410219,-0.061104) (1.418301,-0.031231) (1.424047,-0.001474)
    (1.427506,0.028199)
  } --cycle;
 \draw[controledge,line width=.45pt] plot coordinates {
    (1.313205,0.612440) (1.273944,0.644167) (1.233279,0.674532) (1.191423,0.703573) (1.148580,0.731354)
    (1.104937,0.757959) (1.060662,0.783492) (1.015908,0.808071) (0.970805,0.831826) (0.925463,0.854891)
    (0.879975,0.877403) (0.834417,0.899493) (0.788847,0.921284) (0.743309,0.942884) (0.697834,0.964382)
    (0.652440,0.985843) (0.607131,1.007307) (0.561895,1.028789) (0.516707,1.050275) (0.471524,1.071727)
    (0.426292,1.093082) (0.380941,1.114258) (0.335397,1.135153) (0.289576,1.155648) (0.243399,1.175614)
    (0.196789,1.194904) (0.149683,1.213365) (0.102031,1.230833) (0.053801,1.247138) (0.004985,1.262107)
    (-0.044402,1.275563) (-0.094321,1.287332) (-0.144706,1.297244) (-0.195466,1.305138) (-0.246485,1.310864)
    (-0.297626,1.314284) (-0.348733,1.315281) (-0.399633,1.313755) (-0.450140,1.309630) (-0.500060,1.302853)
    (-0.549196,1.293396) (-0.597347,1.281254) (-0.644322,1.266448) (-0.689932,1.249021) (-0.734004,1.229037)
    (-0.776377,1.206579) (-0.816910,1.181748) (-0.855477,1.154657) (-0.891975,1.125431) (-0.926318,1.094203)
    (-0.958443,1.061113) (-0.988307,1.026307) (-1.015885,0.989933) (-1.041174,0.952139) (-1.064191,0.913076)
    (-1.084971,0.872892) (-1.103568,0.831733) (-1.120060,0.789739) (-1.134538,0.747044) (-1.147117,0.703773)
    (-1.157927,0.660038) (-1.167115,0.615937) (-1.174844,0.571553) (-1.181288,0.526953) (-1.186633,0.482186)
    (-1.191070,0.437283) (-1.194794,0.392263) (-1.198002,0.347131) (-1.200885,0.301886) (-1.203623,0.256524)
    (-1.206385,0.211042) (-1.209313,0.165442) (-1.212526,0.119731) (-1.216104,0.073924) (-1.220092,0.028038)
    (-1.224493,-0.017911) (-1.229268,-0.063912) (-1.234340,-0.109965) (-1.239597,-0.156083) (-1.244899,-0.202286)
    (-1.250082,-0.248603) (-1.254964,-0.295068) (-1.259350,-0.341709) (-1.263035,-0.388544) (-1.265809,-0.435579)
    (-1.267460,-0.482800) (-1.267777,-0.530172) (-1.266555,-0.577638) (-1.263598,-0.625117) (-1.258720,-0.672507)
    (-1.251752,-0.719684) (-1.242539,-0.766508) (-1.230950,-0.812824) (-1.216871,-0.858465) (-1.200211,-0.903255)
    (-1.180900,-0.947014) (-1.158892,-0.989558) (-1.134161,-1.030701) (-1.106703,-1.070258) (-1.076536,-1.108047)
    (-1.043700,-1.143887) (-1.008256,-1.177601) (-0.970288,-1.209018) (-0.929906,-1.237972) (-0.887240,-1.264309)
    (-0.842448,-1.287885) (-0.795709,-1.308573) (-0.747225,-1.326264) (-0.697217,-1.340872) (-0.645922,-1.352334)
    (-0.593590,-1.360619) (-0.540479,-1.365724) (-0.486848,-1.367677) (-0.432953,-1.366538) (-0.379043,-1.362396)
    (-0.325352,-1.355373) (-0.272096,-1.345612) (-0.219467,-1.333285) (-0.167632,-1.318581) (-0.116725,-1.301705)
    (-0.066850,-1.282874) (-0.018075,-1.262313) (0.029569,-1.240248) (0.076083,-1.216905) (0.121505,-1.192503)
    (0.165905,-1.167253) (0.209384,-1.141357) (0.252068,-1.115003) (0.294105,-1.088366) (0.335654,-1.061610)
    (0.376885,-1.034884) (0.417960,-1.008329) (0.459034,-0.982070) (0.500238,-0.956220) (0.541676,-0.930874)
    (0.583420,-0.906102) (0.625509,-0.881942) (0.667955,-0.858397) (0.710746,-0.835429) (0.753854,-0.812958)
    (0.797240,-0.790866) (0.840858,-0.769004) (0.884655,-0.747198) (0.928568,-0.725258) (0.972524,-0.702987)
    (1.016429,-0.680190) (1.060173,-0.656678) (1.103621,-0.632274) (1.146614,-0.606820) (1.188973,-0.580181)
    (1.230495,-0.552243) (1.270963,-0.522925) (1.310145,-0.492173) (1.347800,-0.459963) (1.383689,-0.426306)
    (1.417574,-0.391238) (1.449229,-0.354826) (1.478442,-0.317161) (1.505025,-0.278355) (1.528813,-0.238534)
    (1.549667,-0.197833) (1.567478,-0.156394) (1.582162,-0.114354) (1.593660,-0.071844) (1.601931,-0.028988)
    (1.606953,0.014099) (1.608712,0.057311) (1.607206,0.100546) (1.602440,0.143701) (1.594426,0.186670)
    (1.583189,0.229342)
  };
 \draw[controledge,line width=.45pt] (1.755497,0.281401)--(1.583189,0.229342)
  (1.429161,0.750115)--(1.313205,0.612440);
 \draw[phaseorange,line width=1.7pt] plot coordinates {
    (1.755497,0.281401) (1.748258,0.304138) (1.740343,0.326684) (1.731764,0.349021) (1.722537,0.371135)
    (1.712675,0.393012) (1.702193,0.414638) (1.691109,0.435999) (1.679440,0.457083) (1.667203,0.477879)
    (1.654417,0.498377) (1.641101,0.518565) (1.627275,0.538436) (1.612958,0.557981) (1.598172,0.577194)
    (1.582937,0.596067) (1.567275,0.614597) (1.551206,0.632777) (1.534752,0.650606) (1.517935,0.668080)
    (1.500777,0.685199) (1.483299,0.701961) (1.465523,0.718366) (1.447470,0.734417) (1.429161,0.750115)
  };
 \node at (-.05,-.10) {$K_0$};
 \node at (-1.75,1.37) {$K_1$};
 \draw[ink!65,line width=.45pt] (-1.54,1.26)--(-1.195848,0.843718);
 \node[controledge] at (-1.68,-1.39) {$\omega$};
 \draw[controledge,line width=.55pt] (-1.47,-1.28)--(-1.080300,-0.945701);
 \node[phaseorange] at (1.84,1.23) {$\Gamma_c$};
 \draw[phaseorange,line width=.6pt] (1.74,1.07)--(1.627275,0.538436);
 \node[below] at (0,-1.78) {(b) A planar domain};
\end{tikzpicture}
\end{minipage}
\caption{Schematic control geometries in
(a) one dimension and (b) two dimensions.
The control regions are shown in blue.
The sets $K_0,K_1$ and the planar phase construction
are given in Appendix~\ref{app:planar-phase}.}
\label{fig:basic-control-geometries}
\end{figure}

On an interval, we also consider nonlinearities that need not be
locally Lipschitz at zero and may grow faster than linearly.
We make the following assumption.

\begin{assumption}
\label{ass:radial-log-class}\label{eq:radial-log-class}
There exist constants $0\leq\gamma<1/2$, $0\leq D<\infty$, a real-valued function $\phi\in W^{1,\infty}_{\mathrm{loc}}((0,\infty))$, and a globally Lipschitz mapping $\mathfrak g:\mathbb{C}\to\mathbb{C}$ such that
\[
f(z)=z\phi(|z|)+\mathfrak g(z)\quad(z\ne0),\qquad
f(0)=\mathfrak g(0),
\]
and
\begin{equation}\label{eq:log-growth-assumption}
	r|\phi'(r)|\leq D(1+|\log r|)^\gamma
	\quad\text{for almost every }r>0.
\end{equation}
We use the locally Lipschitz representative of $\phi$.
The value $\mathfrak g(0)\in\mathbb{C}$ is arbitrary, and no sign
condition is imposed on the imaginary part of $\mathfrak g(z)\bar z$.

\end{assumption}

\begin{remark}
(1) The growth assumption gives
\[
|\phi(r)|\leq|\phi(1)|+C D(1+|\log r|)^{1+\gamma},
\]
so the radial term extends continuously by zero at the origin.
When $\gamma=0$, the map $q\mapsto\phi(e^q)$ is globally Lipschitz.
The reality assumption concerns only the radial coefficient;
the full nonlinearity may be complex-valued.

(2)  Assumption~\ref{ass:radial-log-class} applies to the canonical logarithmic nonlinearity as well as to
\[
\lambda z\log(1+|z|^2),\quad
\lambda z\bigl(\log(2+|z|^2)\bigr)^\alpha,
\quad \lambda\in\mathbb{R},~ 1<\alpha<3/2,
\]
and to their arbitrary globally Lipschitz perturbations.

\end{remark}
The second main result concerns this logarithmic class.

\begin{theorem}\label{thm:log-main}
	Let $\Omega=(0,\ell)$ with $\ell>0$, and let $\omega\subset\Omega$ be nonempty and open. Suppose that Assumption~\ref{ass:radial-log-class} holds. For every $T>0$ and $y_0,y_1\in H$, there exists a control $h\in L^2(Q_\omega;\mathbb{C})$ such that \eqref{eq:main} admits a Dirichlet mild solution in the sense of \eqref{eq:mild}, satisfying
	\[
	\begin{gathered}
		y\in C([0,T];H)\cap L^4(Q),\qquad f(y)\in L^2(Q),\\
		y(0)=y_0,\qquad y(T)=y_1.
	\end{gathered}
	\]
	For this control and initial state, the solution is unique within $C([0,T];H)\cap L^4(Q)$. When $\gamma=0$, uniqueness also holds among mild solutions in $C([0,T];H)$ with $f(y)\in L^1(0,T;H)$.
	
	Set $d=\|y_0\|_H+\|y_1\|_H$. The following estimates hold:
	
	\smallskip
	\textup{(i)} Let $q=2(1+\gamma)/3<1$. The control may be selected such that
	\begin{equation}\label{eq:log-main-cost}
		\|h\|_{L^2(Q_\omega)}+\|y\|_{C([0,T];H)}
		\leq C e^{C/T}\left((1+d)
		\exp\!\left(C[\log(2+d)]^q\right)+\sqrt T\right).
	\end{equation}
	Consequently, for any $\varepsilon>0$,
	\begin{equation}\label{eq:log-almost-linear-cost}
		\|h\|_{L^2(Q_\omega)}+\|y\|_{C([0,T];H)}
		\leq C_\varepsilon e^{C/T}
		\bigl((1+d)^{1+\varepsilon}+\sqrt T\bigr).
	\end{equation}
	Here $C$ depends only on $\Omega$, $\omega$, $\gamma$, $D$, $|\phi(1)|$, $\operatorname{Lip}(\mathfrak g)$ and $|\mathfrak g(0)|$; $C_\varepsilon$ additionally depends on $\varepsilon$. In particular, $C$ is independent of $\varepsilon$, and both constants are independent of $T,y_0,y_1$.
	
	\smallskip
	\textup{(ii)} If $f(z)=\lambda z\log|z|^2$ for $z\ne0$, with $f(0)=0$ and $\lambda\in\mathbb{R}$, then the control can instead be chosen so that
	\begin{equation}\label{eq:canonical-log-fast-cost}
		\|h\|_{L^2(Q_\omega)}+\|y\|_{C([0,T];H)}
		\leq C_\lambda e^{C_\lambda/T}d,
	\end{equation}
	where $C_\lambda$ depends only on $\Omega$, $\omega$ and $\lambda$.
\end{theorem}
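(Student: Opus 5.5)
The plan is to reduce Theorem~\ref{thm:log-main} to the globally Lipschitz machinery behind Theorem~\ref{thm:main}(ii) by truncation, and then remove the truncation using a priori bounds. For $M\geq1$ define $\phi_M(r)=\phi(\min\{\max\{r,M^{-1}\},M\})$ and $f_M(z)=z\phi_M(|z|)+\mathfrak g(z)$. Since $r|\phi'(r)|\leq D(1+|\log r|)^\gamma$ and $|\phi|\leq|\phi(1)|+CD(1+|\log r|)^{1+\gamma}$, the map $z\mapsto z\phi_M(|z|)$ is globally Lipschitz with constant
\[
L_M\lesssim 1+|\phi(1)|+D(1+\log M)^{1+\gamma}.
\]
Theorem~\ref{thm:main}(ii) then yields controls $h_M$ and states $y_M$ with cost bounded by $Ce^{C(T^{-1}+L_M^{2/3})}(d+\sqrt T)$. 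Here $L_M^{2/3}\sim(\log M)^{2(1+\gamma)/3}=(\log M)^q$ with $q<1$, which is precisely the exponent in \eqref{eq:log-main-cost}. The remaining task is to show that truncation only matters on a set where it can be absorbed, i.e.\ to choose $M$ in terms of $d$ so that $y_M$ solves the untruncated equation up to a small, controllable error.

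Since $L^\infty$ control of $y_M$ is unavailable at $L^2$ regularity, I would not aim for $y_M=y$ exactly. Instead I would rerun the weighted contraction argument directly for $f$, using the one-dimensional Strichartz estimate $L^2\to L^4(Q)$ (admissible on the interval with Dirichlet conditions for the pair $(8,4)$ locally in time, hence $L^4(Q)$) together with the key difference estimate
\[
|z\phi(|z|)-w\phi(|w|)|\leq C|z-w|\bigl(1+|\phi(1)|+D(1+|\log|z||+|\log|w||)^{1+\gamma}\bigr).
\]
Splitting $Q$ into the region where $|z|,|w|\leq M$ (Lipschitz constant $L_M$) and its complement, the large part is handled by $|\log|z||^{1+\gamma}\lesssim_\varepsilon|z|^\varepsilon$ combined with the $L^4$ norm via Hölder, while the small-amplitude part uses $|z\log|z||\to0$ (no Lipschitz bound is needed at zero, only Hölder continuity of order $1-\varepsilon$). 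Choosing $M\sim(2+d)^{K}$, the fixed point for $f_M$ with Carleman parameter tuned to $L_M$ should absorb the tail contributions, giving a fixed point of the true map in $C([0,T];H)\cap L^4(Q)$ with $f(y)\in L^2(Q)$. The bound \eqref{eq:log-almost-linear-cost} follows from $\exp(C[\log(2+d)]^q)\leq C_\varepsilon(1+d)^\varepsilon$ because $q<1$.

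For uniqueness in $C([0,T];H)\cap L^4(Q)$, I would use the Cazenave-type logarithmic inequality $\Imag\bigl((z\phi(|z|)-w\phi(|w|))\overline{(z-w)}\bigr)\leq C|z-w|^2$, which holds for real $\phi$ with $r|\phi'|\leq D$ when $\gamma=0$. When $\gamma>0$, I would combine this with an $L^4$-based Gronwall/Osgood argument, since the difference $u=y-\tilde y$ satisfies $\frac{d}{dt}\|u\|^2\leq 2\int|\Imag(\cdot)|$. The $\gamma=0$ claim among $C([0,T];H)$ solutions with $f(y)\in L^1H$ follows from the same energy identity, justified by regularization.

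For part (ii) I would exploit scaling. If $y$ solves the equation with $f=\lambda z\log|z|^2$, then $\mu y$ solves it with the extra linear term $-\lambda\mu y\log\mu^2$ replaced accordingly, namely $f(\mu z)=\mu f(z)+\lambda\mu z\log\mu^2$, which is a real constant potential. Normalizing to $d=1$ by $\mu=1/d$ changes only a real constant potential $\lambda\log d^2$, which can be gauged away by the phase $e^{-i\lambda t\log d^2}$ (unitary, so it commutes with $\Delta$ and preserves $\one_\omega$ and the $L^2$ norms, up to modifying the target by a phase). The unit-size problem is solved by (i) with a constant depending on $\lambda$, and rescaling gives a cost linear in $d$. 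The main obstacle is the absorption step in the second paragraph: making the Carleman weights compatible with the non-Lipschitz tails in $L^4$ so as to obtain the quasi-polynomial exponent $q$ rather than a polynomial loss. There I would first try choosing the Carleman parameter $s\sim L_M^{2/3}$ and bounding the tails by $M^{-\varepsilon}$ times $L^4$ norms.
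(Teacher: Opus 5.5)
\textbf{The gap.} The problem is the step you flag as the main obstacle, and it does not close. You cannot run the weighted fixed point for the untruncated $f$, and with one truncation level you only reach the target approximately.

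\textbf{Why the contraction fails for $f$.} The contraction of Section~\ref{sec:nonlinear} works only because $|f(r+v)-f(r+w)|\le L|v-w|$ pointwise with a finite $L$. The only information on $v_F$ is $\theta^{-1}v_F\in L^2(Q)$, and $\theta^{-1}$ blows up at least like $e^{\alpha\kappa}$ at $t=0,T$, precisely where the bridge $r$ is an arbitrary $L^2$ trajectory. For $f$ itself the local Lipschitz constant is unbounded both at large amplitudes and at $z=0$; already $|v\log|v||/|v|\to\infty$. Your two devices do not repair this in $\mathcal S_s$:
\begin{enumerate}
\item H\"older against $L^4$ would need a \emph{weighted} $L^p$ bound, $p>2$, on $\theta^{-1}v_F$. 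The duality construction does not provide one: the $L^4$ estimate is unweighted, and the weight varies exponentially in time.
\item A H\"older modulus $|v-w|^{1-\varepsilon}$ near zero gives no contraction at all. Moreover $\theta^{-2}|v-w|^{2-2\varepsilon}=\theta^{-2\varepsilon}(\theta^{-1}|v-w|)^{2-2\varepsilon}$ is not even controlled by $\norm{v-w}_{\Ys}$.
\end{enumerate}
Likewise, the tail $f-f_M$ is small in unweighted $L^2$ but not in $\mathcal S_s$ near the time endpoints. With no compactness to fall back on, this route gives no fixed point of the true map. A side point: the $L^4(Q)$ bound should come from the periodic $L^4_{t,x}$ estimate of Lemma~\ref{lem:interval-l4}, not from Euclidean Strichartz admissibility, which does not transfer to the interval without loss in general.

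\textbf{What the paper does instead.} It keeps the truncation inside a Lipschitz problem and the true nonlinearity outside it.
\begin{enumerate}
\item Build an uncontrolled reference $u$ with $u(T)=y_1$. This is done backward via $f^\sharp(z)=\overline{f(\bar z)}$ and time reversal, since the logarithmic Cauchy theory of Lemma~\ref{lem:log-wellposed} is forward in time.
\item Null-control the shifted truncated difference $f_K(u+z)-f_K(u)$ on a short interval by Lemma~\ref{lem:shifted-control}, and feed that same control into the true equation.
\item Estimate the terminal mismatch by writing both equations with the common nonlinearity $f_K$. The forced stability \eqref{eq:log-forced-stability} rests on the one-sided bound \eqref{eq:log-one-sided}. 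Its Gronwall constant is $\Gamma_K=L_g+D(1+K)^\gamma$, not the Lipschitz constant $\sim K^{1+\gamma}$. This is essential, and it is the imaginary-part cancellation you invoke only for uniqueness. The source difference is the tail \eqref{eq:log-tail-error}, measured through the $L^4$ bound.
\end{enumerate}
The result is Lemma~\ref{lem:log-short-control}: cost $e^{C(\tau^{-1}+K^q)}M$ and a nonzero error $e^{-K+C(\tau^{-1}+K^q)}(B+M^3)$.

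Exactness then comes from an infinite cascade with $\tau_j=a/(j+1)^2$ and $K_j=A2^j$. Because $q<1$, one gets $M_{j+1}\le e^{-K_j/2}$ and $H_j\le e^{-K_j/8}$, so the controls are square-summable. Continuity of the global solution gives $y(S)=u(S)$, and the reference then carries the state to $y_1$. The quasi-polynomial cost comes from the first step alone, with $A\lesssim T^{-1}+\log(2+d)$. Your heuristic $L_M^{2/3}\sim(\log M)^q$ does match that exponent.

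\textbf{Remaining parts.} Your uniqueness sketch (tail comparison, then $K\to\infty$) and the gauge-scaling idea in (ii) are essentially the paper's. In (ii), for large $T$ do not apply (i) on all of $[0,T]$, since that leaves a $\sqrt T$. Instead control on two short end intervals and use the zero trajectory in between, which $f(0)=0$ permits.
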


At the   exponent $\gamma=1/2$, the argument requires
smallness only of the leading coefficient.

\begin{theorem}\label{thm:log-critical-small}
Let $\Om=(0,\ell)$, $\ell>0$, and let $\omega\subset\Om$ be
nonempty and open. There are constants $D_*,c_*>0$, depending
only on $\Om$ and $\omega$, with the following property.
Suppose that
\[
 f(z)=z\phi(|z|)+\mathfrak g(z)\quad(z\ne0),\qquad
 f(0)=\mathfrak g(0),
\]
where $\phi\in W^{1,\infty}_{\mathrm{loc}}((0,\infty);\R)$
is taken in its locally Lipschitz representative and
$\mathfrak g:\C\to\C$ is globally Lipschitz. Assume that
\begin{equation}\label{eq:critical-log-two-coefficients}
 r|\phi'(r)|\leq D_0+D_1\sqrt{1+|\log r|}
 \quad\text{for a.e. }r>0,
 \qquad D_0\geq0,\quad 0\leq D_1\leq D_*.
\end{equation}
For every $T>0$ and $y_0,y_1\in H$, there is a control
$h\in L^2(Q_\omega;\C)$ such that \eqref{eq:main} admits a Dirichlet
mild solution in the sense of \eqref{eq:mild}, with
\[
 \begin{gathered}
 y\in C([0,T];H)\cap L^4(Q),\qquad f(y)\in L^2(Q),\\
 y(0)=y_0,\qquad y(T)=y_1.
 \end{gathered}
\]
For this control and initial state, the solution is unique in
$C([0,T];H)\cap L^4(Q)$. Writing
$d=\|y_0\|_H+\|y_1\|_H$, the control can be chosen, for every
$\varepsilon>0$, so that
\begin{equation}\label{eq:critical-main-cost}
 \|h\|_{L^2(Q_\omega)}+\|y\|_{C([0,T];H)}
 \leq C_\varepsilon e^{C/T}
       \left((1+d)^{1+c_*D_1^{2/3}+\varepsilon}+\sqrt T\right).
\end{equation}
Here $C$ depends only on $\Om$, $\omega$, $D_0$, $D_1$,
$|\phi(1)|$, $\operatorname{Lip}(\mathfrak g)$ and
$|\mathfrak g(0)|$; $C_\varepsilon$ may also depend on
$\varepsilon$. In particular, $C$ is independent of $\varepsilon$,
and both constants are independent of $T,y_0,y_1$.
No smallness is required of $D_0$ or $\mathfrak g$.
\end{theorem}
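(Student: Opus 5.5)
The plan is to treat the critical case as a perturbation of the proof of Theorem~\ref{thm:log-main}, tracking carefully where $\gamma<1/2$ was used. First I split the radial coefficient as
\[
\phi(r)=\phi_0(r)+\phi_1(r),
\]
where $\phi_0$ absorbs the $D_0$ part of \eqref{eq:critical-log-two-coefficients}. The map $q\mapsto\phi_0(e^q)$ is then globally Lipschitz with constant $D_0$, exactly as in the case $\gamma=0$. The part $\phi_1$ satisfies $r|\phi_1'(r)|\leq D_1\sqrt{1+|\log r|}$. The zeroth-order Carleman-weighted contraction scheme behind Theorem~\ref{thm:main}(ii) is used on the interval with any nonempty open $\omega$.

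The nonlinear step freezes $f(y)=V(y)y+\mathfrak g(y)$ with the real potential $V=\phi(|\bar y|)$, where $\bar y$ is the current iterate. I then solve the linear control problem for $i\partial_t y=\Delta y+Vy+\mathfrak g(y)+\one_\omega h$ and close by a fixed point in $C([0,T];H)\cap L^4(Q)$. Since $\phi$ grows like $(1+|\log r|)^{3/2}$, $V$ is unbounded, so I truncate by levels. On $\{|\bar y|\leq M\}$ the effective Lipschitz constant of the radial term is of order
\[
L_M\sim D_0+D_1\sqrt{\log M}+\text{(terms from }\phi(1)\text{ and }\mathfrak g).
\]
Inserting this into the Carleman cost \eqref{eq:unified-lipschitz-cost} gives a factor $e^{C L_M^{2/3}}\sim e^{C D_1^{2/3}(\log M)^{1/3}}$. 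The tail $\{|\bar y|>M\}$ is controlled through the one-dimensional $L^4$ Strichartz estimate. The growth $(1+|\log r|)^{\gamma}$ enters via $|\phi(r)|\lesssim(1+|\log r|)^{1+\gamma}$, which is still subpolynomial, so $f(y)\in L^2(Q)$ follows from $y\in L^4(Q)$. The rest of the argument (a priori bound in $L^4$, and uniqueness by a logarithmic Gronwall/Yudovich-type estimate on differences) is taken over verbatim from the proof of Theorem~\ref{thm:log-main}. At $\gamma=1/2$ the Osgood integral $\int^\infty \frac{dq}{q^{\gamma}\cdot q^{1-\gamma}}$-type condition used there is borderline but still diverges, so uniqueness survives.

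The main obstacle is the balance in the choice of $M$ relative to the data size $d$. For $\gamma<1/2$ the exponent $q=2(1+\gamma)/3<1$ yields $\exp(C[\log(2+d)]^q)$, which is sub-power. At $\gamma=1/2$, $q=1$, so the same bookkeeping produces $\exp(C\,c\,D_1^{2/3}\log(2+d))=(2+d)^{cD_1^{2/3}}$. This is exactly the extra exponent $c_*D_1^{2/3}$ in \eqref{eq:critical-main-cost}, while the $D_0$ contributions give only $\exp(C(\log d)^{2/3})\leq C_\varepsilon(1+d)^{\varepsilon}$.

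The point where smallness $D_1\leq D_*$ is genuinely needed is the closing of the fixed point. The map must send a ball of radius $R\sim(1+d)^{1+c_*D_1^{2/3}+\varepsilon}$ into itself. Since the cost bound is evaluated at levels $M\sim R$, this requires a self-consistency inequality of the form
\[
(1+d)\,R^{c D_1^{2/3}}\leq R.
\]
This holds for all large $d$ exactly when $cD_1^{2/3}<1$, and $D_*$ is chosen accordingly, depending only on the Carleman constants for $(\Om,\omega)$.

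I would first try choosing the truncation level $M=R$ and the Carleman parameter $s\sim L_M^{2/3}$ as in Theorem~\ref{thm:main}. I would then verify that the tail terms, estimated via Strichartz by $R^{-\delta}$ times powers of $\log R$, are absorbed. The $\sqrt T$ term and the $e^{C/T}$ dependence come from the $f(0)=\mathfrak g(0)$ contribution and the observability constant, as before.
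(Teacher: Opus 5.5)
There are two genuine gaps: the Lipschitz constant is miscounted, and the closing step has no fixed-point theorem behind it and cannot give exact control.

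\emph{The Lipschitz constant.} The Carleman contraction behind Lemma~\ref{lem:shifted-control} needs the full Lipschitz constant of $z\mapsto z\phi(|z|)$. That constant is governed by $|\phi(r)|+r|\phi'(r)|$, not by $r|\phi'(r)|$ alone; the latter only enters the one-sided bound \eqref{eq:log-one-sided}, i.e.\ stability. Since $|\phi(r)|$ grows like $D_1(1+|\log r|)^{3/2}$ (as you note yourself), and may blow up as $r\downarrow0$ (already for $\phi(r)=\lambda\log r^2$), the set $\{|\bar y|\le M\}$ carries no finite Lipschitz constant. One must clamp $\log|z|$ to $[-K,K]$ at both ends. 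Then $\operatorname{Lip}(f_K)\le L_0+D_0(1+K)+\tfrac{5D_1}{3}(1+K)^{3/2}$, with $L_0=|\phi(1)|+\operatorname{Lip}(\mathfrak g)$. So $\operatorname{Lip}(f_K)^{2/3}$ is linear in $K$ with coefficient proportional to $D_1^{2/3}$. Consequently \eqref{eq:shifted-lipschitz-cost} gives the cost $e^{c_0/\tau+aK}$, where $a=C_*(5D_1/3)^{2/3}$, plus a harmless sublinear $bK^{2/3}$ from $D_0$. This linear growth is exactly what makes $\gamma=1/2$ critical. With your $L_M\sim D_0+D_1\sqrt{\log M}$, the factor would be $e^{CD_1^{2/3}(\log M)^{1/3}}$, which is sub-power. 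There would then be no criticality and no need for $D_*$, so your claim that ``the same bookkeeping'' yields $(2+d)^{cD_1^{2/3}}$ does not follow from what you wrote.

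\emph{The closing step.} Freezing $V=\phi(|\bar y|)$ gives a map $\bar y\mapsto y$ on a ball of $C([0,T];H)\cap L^4(Q)$, but you have neither compactness nor a contraction.
\begin{itemize}
\item There is no compactness: the Schr\"odinger group does not regularize $L^2$ data. This is the obstruction of \cite[(3.7)--(3.8)]{Zuazua2003} that the paper is designed to avoid.
\item There is no contraction: $|\phi(|a|)-\phi(|b|)|$ is not bounded by $C|a-b|$, and the weighted contraction of Section~\ref{sec:nonlinear} needs a global pointwise Lipschitz bound (see the remark after Proposition~\ref{prop:cost}).
\end{itemize}
So the self-mapping inequality $(1+d)R^{cD_1^{2/3}}\le R$ proves nothing. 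For the truncated equation no freezing is needed, since $f_K$ is globally Lipschitz and Lemma~\ref{lem:shifted-control} applies directly. But any single truncation level yields an exact control only for the truncated equation. For the true equation it leaves a nonzero terminal error (your Strichartz tail), so exactness is lost.

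\emph{The missing idea: the paper's cascade.}
\begin{enumerate}
\item Build an uncontrolled reference $u$ with $u(T)=y_1$ by conjugate time reversal.
\item On intervals of length $\tau_j=b/(j+1)^2$ with levels $K_j=A2^j$, null-control $f_{K_j}(u+z)-f_{K_j}(u)$ by Lemma~\ref{lem:shifted-control}.
\item Feed that control into the true equation and compare through the common truncation $f_{K_j}$. This gives terminal error $e^{c_\eta/\tau-(1-3a-\eta)K}(B+M^3)$, where the $3a$ comes from cubing the control cost in the $L^4$ tail.
\end{enumerate}
This is where smallness of $D_1$ is actually used. Write $M_j=\|y(t_j)-u(t_j)\|_2$ and let $H_j$ be the control norm on the $j$th interval. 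Then $a\le1/64$ yields $H_j\le e^{K_j/8}M_j$ and $M_{j+1}\le e^{-7K_j/8}(B+M_j^3)$, hence $M_{j+1}\le e^{-K_j/2}$ and square-summable controls. Exactness at $t=S$ follows from continuity and uniqueness. The exponent $1+24a$ comes from the first step, where $A\approx 8\log W$ with $W\lesssim(1+d)^3$.

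\emph{Uniqueness.} Your Osgood remark is also off. Uniqueness in $C([0,T];H)\cap L^4(Q)$ comes from the truncation comparison, which needs only $\gamma<1$, so nothing is borderline at $\gamma=1/2$.
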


\begin{remark}\label{cor:log-asymptotic-critical}
Under the notation and regularity assumptions of
Theorem~\ref{thm:log-critical-small}, its conclusions remain valid
if \eqref{eq:critical-log-two-coefficients} is replaced by
\[
 \max\left\{
 \operatorname*{ess\,limsup}_{r\downarrow0}
       \frac{r|\phi'(r)|}{\sqrt{1+|\log r|}},\;
 \operatorname*{ess\,limsup}_{r\to\infty}
       \frac{r|\phi'(r)|}{\sqrt{1+|\log r|}}
 \right\}<D_*.
\]
Indeed, choose $D_1<D_*$ above both limits; local Lipschitz
regularity allows the contribution from the remaining compact
amplitude range to be absorbed into a finite $D_0$ in
\eqref{eq:critical-log-two-coefficients}.
\end{remark}

\subsection{Extension to Riemannian manifolds}\label{subsec:riemannian-extension}

The weighted control argument extends to compact Riemannian
manifolds under a covariant version of the local phase condition.
The result again allows arbitrary $L^2$ endpoints in every
prescribed positive time, without restricting the Lipschitz constant.
Riemannian Carleman estimates for Schr\"odinger equations were developed
by Triggiani and Xu~\cite{TriggianiXu2007} at the $H^1$ level.
In our setting, the control acts in the interior, and the phase
condition uses the covariant Hessian and tangency on the unobserved boundary.

\begin{proposition}\label{prop:riemannian-control}
Let $(M,g)$ be a smooth compact connected Riemannian manifold, either with smooth
boundary or without boundary, and let $\omega\subset M^\circ$ be
nonempty and open, where $M^\circ$ denotes the interior. Suppose that
there are a relatively open neighborhood
$\mathcal O$ of $K_\omega=\overline{M^\circ\setminus\omega}^{\,M}$, a real
$\psi\in C^\infty(M)$, and $c_0,c_1>0$ such that
\begin{equation}\label{eq:riemannian-phase}
 \begin{gathered}
 |\nabla_g\psi|_g\geq c_0,\qquad
 \operatorname{Hess}_g\psi\geq c_1g\quad\text{on }\mathcal O,\\
 \partial_{\nu_g}\psi=0\quad\text{on }\mathcal O\cap\partial M.
 \end{gathered}
\end{equation}
The last condition is omitted if $\partial M=\varnothing$.
If $f$ satisfies Assumption~\ref{ass:lip}, then for every $T>0$
and $y_0,y_1\in L^2(M;\C)$, the equation
\[
 i\partial_t y=\Delta_g y+f(y)+\one_\omega h
\]
has a control $h\in L^2((0,T)\times\omega;\C)$ whose unique mild
solution satisfies $y\in C([0,T];L^2(M;\C))$, $y(0)=y_0$ and
$y(T)=y_1$. Homogeneous Dirichlet conditions are imposed when
$\partial M\ne\varnothing$. Moreover,
\begin{equation}\label{eq:riemannian-control-cost}
 \begin{aligned}
 &\|h\|_{L^2((0,T)\times\omega)}
       +\|y\|_{C([0,T];L^2(M))}\\
 &\qquad\leq C e^{C(T^{-1}+L^{2/3})}
 \bigl(\|y_0\|_{L^2(M)}+\|y_1\|_{L^2(M)}
          +T\operatorname{Vol}_g(M)^{1/2}|f(0)|\bigr).
 \end{aligned}
\end{equation}
All spatial norms use the Riemannian volume measure. The constant
$C$ depends only on the fixed manifold, metric, control region and
phase data; it is independent of $T,L,f,y_0,y_1$.
\end{proposition}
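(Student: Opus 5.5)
The plan is to transplant the Euclidean proof of Theorem~\ref{thm:main}(i) to $(M,g)$ essentially verbatim. The argument has three components: a parameter-uniform zeroth-order Carleman estimate for $P_g=i\partial_t-\Delta_g$, a weighted duality (HUM-type) construction of controls for the linear problem with source, and a weighted contraction that absorbs the nonlinearity. Only the first component uses the geometry. The other two are abstract and use only that $A_g=-\Delta_g$ (Dirichlet if $\partial M\neq\varnothing$) is self-adjoint and nonnegative on $L^2(M;\C)$ with the Riemannian volume. In particular, $U_g(t)=e^{itA_g}$ is unitary, and the Nemytskii map of $f$ satisfies $\|f(v)\|\le L\|v\|+\operatorname{Vol}_g(M)^{1/2}|f(0)|$. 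Existence and uniqueness of mild solutions then follow exactly as for \eqref{eq:mild}.

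\textbf{Step 1 (Carleman estimate on $M$).} Take the weight $\varphi(t,x)=(e^{\lambda\psi(x)}-e^{2\lambda\|\psi\|_\infty})/(t(T-t))$, or the weight used in the Euclidean case, built from the $\psi$ in \eqref{eq:riemannian-phase}. The conjugated operator $e^{s\varphi}P_ge^{-s\varphi}$ is split into symmetric and antisymmetric parts, and the commutator is computed covariantly. The Euclidean terms $D^2\psi(\nabla u,\nabla\bar u)$ become $\operatorname{Hess}_g\psi(\nabla_g u,\overline{\nabla_g u})$. Curvature enters only through the Bochner/Ricci identity $[\nabla_g,\Delta_g]$, which yields lower-order terms $\operatorname{Ric}(\nabla_g\psi,\cdot)$. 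These terms are bounded and are absorbed for $s,\lambda$ large.

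Positivity is needed only on $\mathcal O\supset K_\omega$. On $\omega$ one localizes with a cutoff $\chi\in C^\infty$, $\chi=1$ on a neighborhood of $M\setminus\omega$ and supported in $\mathcal O$, and treats the resulting terms as observation terms on $(0,T)\times\omega$. Boundary terms arising from the integrations by parts on $\mathcal O\cap\partial M$ contain $\partial_{\nu_g}\psi$ or $|\partial_{\nu_g}u|^2\partial_{\nu_g}\psi$, and these vanish by the tangency condition together with $u=0$. The $H^1$ estimate is then lowered to zeroth order (the $L^2$, parameter-uniform form) by the same duality/shift argument as in the Euclidean section. This gives
\[
 \|e^{s\varphi}u\|_{L^2}\le C s^{-\kappa}\|e^{s\varphi}P_gu\|_{L^2}+C\|e^{s\varphi}u\|_{L^2((0,T)\times\omega)}
\]
with some $\kappa>0$, with $C$ independent of $s\ge s_0(T)$, and with admissible $s\sim C(T+T^2)$ as in the Euclidean case. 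The Euclidean version of this estimate used local coordinates only through the quantities above, so the proof transfers once it is written invariantly, with a partition of unity used near $\partial M$ if necessary.

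\textbf{Step 2 (linear control and nonlinear contraction).} Exactly as in the proof of Theorem~\ref{thm:main}, the Carleman estimate yields, for any source $F$ in the weighted space, a control $h=\mathcal H(F)$ and state $y=\mathcal Y(F)$ steering $y_0$ to $y_1$. The state satisfies $\|e^{s\varphi}\cdots\|$-type bounds in which the source-to-state gain is $\le Cs^{-\kappa}$. We then define the map $z\mapsto \mathcal Y(f(z))$. Since $f$ is $L$-Lipschitz pointwise, it is Lipschitz in the weighted $L^2$ space with constant $CLs^{-\kappa}$. The constant term $f(0)$ is handled separately, contributing $T\operatorname{Vol}_g(M)^{1/2}|f(0)|$. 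Choosing $s\simeq C(T^{-1}\cdot\text{scale}+L^{2/3})$, so that the map is a contraction ($\kappa=3/2$ produces the $L^{2/3}$), we obtain a fixed point by Banach's theorem. Unweighting at the endpoints costs $e^{Cs}$, which gives \eqref{eq:riemannian-control-cost}. Continuity into $C([0,T];L^2(M))$ follows from the mild formula.

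The main obstacle is Step 1: checking that the covariant commutator keeps the principal positivity with constants depending only on $c_0,c_1$ and the geometry, and that every boundary term on $\mathcal O\cap\partial M$ is killed by $\partial_{\nu_g}\psi=0$. I would first redo the Euclidean multiplier identity in normal coordinates at each point, where the Euclidean formulas hold up to Christoffel terms of lower order. Only if some boundary term resisted would I switch to an invariant divergence-form computation.
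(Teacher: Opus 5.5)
Your $H^1$-level computation matches the paper's strong estimate: $w$ is supported in $\mathcal O$, and the boundary flux is removed by $\partial_{\nu_g}\psi=0$. Curvature is harmless; in the divergence form $[-\Delta_g,2\nabla_g\psi\cdot\nabla_g+\Delta_g\psi]=-4\operatorname{div}_g(\operatorname{Hess}_g\psi\,\nabla_g\,\cdot\,)-\Delta_g^2\psi$ it does not appear separately. The gap is the order reduction, which is the real content of the proof, and you both misorder it and leave it unspecified. If you localize first and keep $[P_g,\chi]u=-2\langle\nabla_g\chi,\nabla_gu\rangle_g-(\Delta_g\chi)u$ as an observation term, the right-hand side contains $\int_{Q_\omega}\kappa\theta^2|\nabla_gu|^2$. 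No later argument can trade a gradient of $u$ on $\omega$ for $\int_{Q_\omega}\kappa^3\theta^2|u|^2$: local gradients of Schr\"odinger solutions are not controlled by local $L^2$ norms (test with $e^{i\lambda_jt}e_j$ as in Remark~\ref{rem:gradient-obstruction}).

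The paper regularizes before applying the strong estimate. It applies \eqref{eq:riemannian-strong} to $\chi_2S_\kappa v$, $v=\theta\chi p$, where $S_\kappa=\kappa(\kappa^2+A)^{-1/2}$ is the intrinsic Dirichlet functional calculus of $A=-\Delta_g$; chart-wise Euclidean regularizers glued by a partition of unity are not available near $\partial M$. It then writes
\[
\theta[P_g,\chi]p=-2\operatorname{div}_g(\theta p\nabla_g\chi)+(\Delta_g\chi+2\kappa\langle\nabla_g\psi,\nabla_g\chi\rangle_g)\theta p .
\]
The ingredients are $\|S_\kappa\operatorname{div}_gF\|_2\le\kappa\|F\|_2$, the energy identity $\kappa\|\nabla_gS_\kappa v\|_2^2+\kappa^3\|S_\kappa v\|_2^2=\kappa^3\|v\|_2^2$, off-diagonal decay, and $\|[X,S_\kappa]\|\le C$, $\|[b,S_\kappa]\|\le C\kappa^{-1}$. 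The bound on $[X,S_\kappa]$ comes from $[X,R_\mu]=R_\mu[A,X]R_\mu$. This identity needs $XR_\mu u$ to have zero trace, i.e.\ $X$ tangent to all of $\partial M$. That is why the transport field is replaced by $\rho\nabla_g\psi$, and it is a second use of $\partial_{\nu_g}\psi=0$ that your plan does not account for.

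Step~2 does not work as formulated. Since $\theta\le e^{-\alpha\kappa}$ with $\kappa\to\infty$ at $t=0,T$, a continuous state in $\mathcal Y_s$ must vanish at both ends, and a source in $\mathcal S_s$ must be exponentially small there. A fixed point of $z\mapsto\mathcal Y(f(z))$ with $z(0)=y_0$ has source $f(z(t))\to f(y_0)$ as $t\to0$, which is not in $\mathcal S_s$, already for $f(z)=Lz$. For the same reason $f(0)$ cannot be ``handled separately.''

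The missing idea is the nonlinear bridge. Let $u^0$ be the uncontrolled solution from $y_0$ and $u^1$ the backward uncontrolled solution ending at $y_1$, and set $r=\eta u^0+(1-\eta)u^1$. Then $R=f(r)-P_gr$ vanishes outside $[T/3,2T/3]$, so $R\in\mathcal S_s$. One contracts $F\mapsto R+f(r+v_F)-f(r)$, where $v_F$ has zero endpoints, with factor $L\sqrt{C_{\rm Car}}\,m_s^{-3/2}$ and $m_s=4s/T^2$. The cost bookkeeping must also be done in $m_s$, not $s$: the cost is $e^{Cm_s}$, whereas $e^{Cs}$ with $s\ge aT+bT^2$ would be $e^{CT^2}$.

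Finally, the bridge carries a Gronwall factor $e^{LT}$, so one application cannot give a constant independent of $T$ and $L$. The paper removes it by splitting $[0,T]$ into $J=\max\{1,\lceil T(1+L)^{1/3}\rceil\}$ intervals, so that $L\tau\le(1+L)^{2/3}$, and steering $y_0\to0\to\cdots\to0\to y_1$. Summing the segment contributions is also what produces the term $T\operatorname{Vol}_g(M)^{1/2}|f(0)|$.
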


The proof is presented in Section~\ref{subsec:riemannian-proof}.
The local phase condition can be illustrated by an interior
geodesic ball $B_g(p,R)$, chosen sufficiently small that
$r(x)^2=d_g(p,x)^2$ is smooth and $\operatorname{Hess}_g(r^2)\geq g$
on the ball. If $K_\omega$ lies in $\{a\leq r\leq b\}$ with
$0<a<b<R$, a smooth extension of $r^2$ near $K_\omega$
satisfies \eqref{eq:riemannian-phase}.

\begin{corollary}\label{cor:one-dimensional-manifolds}
Let $(M,g)$ be a smooth, compact, connected one-dimensional
Riemannian manifold, possibly with boundary, and let
$\omega\subset M^\circ$ be nonempty and open. The conclusions of
Theorems~\ref{thm:main}, \ref{thm:log-main},
and~\ref{thm:log-critical-small} hold for
\[
 i\partial_t y-\Delta_g y=f(y)+\one_\omega h,
 \qquad y(0)=y_0,\quad y(T)=y_1,
\]
with homogeneous Dirichlet conditions when $\partial M\ne\varnothing$.
Here $H=L^2(M;\C)$, all spatial integrals use the Riemannian
volume, and the respective nonlinear assumptions are unchanged.
The dependence of the constants on the interval and control
region is replaced by dependence on $(M,g)$ and $\omega$;
all other stated dependencies remain unchanged.
\end{corollary}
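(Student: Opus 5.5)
The plan is to reduce Corollary~\ref{cor:one-dimensional-manifolds} to the interval results, Theorems~\ref{thm:main}(ii), \ref{thm:log-main} and~\ref{thm:log-critical-small}, through the classification of compact connected one-dimensional manifolds. Such an $(M,g)$ is isometric either to a closed interval $[0,\ell]$ with $\ell=\operatorname{length}_g(M)$, or to a circle $\R/\ell\mathbb Z$. In each case we use an arclength parametrization.

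\textbf{Interval case.} Arclength coordinates give an isometry. Under it, $\Delta_g$ becomes $\partial_x^2$, the Riemannian volume becomes $\dd x$, Dirichlet conditions are preserved, and $\omega$ maps to a nonempty open subset of $(0,\ell)$. Thus $L^2(M)$ norms, the mild solution formula \eqref{eq:mild}, the spaces $C([0,T];H)\cap L^4(Q)$, and the condition $f(y)\in L^2(Q)$ all transfer verbatim. The nonlinear assumptions are pointwise in $z$, so they are unaffected. The three theorems therefore apply directly, with constants depending on $(\ell,\omega)$, hence on $(M,g)$ and $\omega$.

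\textbf{Circle case.} This is the genuine step, since the interval theorems are stated for Dirichlet intervals only. I would not rely on reproving the Carleman estimate. Instead I would re-run the proofs of those theorems, which rest on two one-dimensional ingredients:
\begin{itemize}
\item[(1)] the parameter-uniform weighted $L^2$ Carleman/observability estimate used for (ii) of Theorem~\ref{thm:main}, which on the interval works for an arbitrary open $\omega$;
\item[(2)] the one-dimensional Strichartz-type $L^4(Q)$ bounds used in the logarithmic theorems.
\end{itemize}

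For (1), pick a small arc $J\Subset\omega$ and a point $p\in J$. Cutting the circle at $p$ turns $M\setminus\{p\}$ into the open interval $(0,\ell)$ with $K_\omega$ compactly contained away from the endpoints. There one can choose a convex phase, for instance $\psi(x)=(x-\ell/2)^2+\kappa$ modified inside $J$. This phase satisfies $|\psi'|\ge c_0$ and $\psi''\ge c_1$ on a neighbourhood of $K_\omega$, provided the critical point $\ell/2$ lies in $\omega$ (rotate $p$ so that this holds). It then satisfies the covariant condition \eqref{eq:riemannian-phase} on the boundaryless manifold $M$. Alternatively, for the Lipschitz part one may simply invoke Proposition~\ref{prop:riemannian-control} with $\partial M=\varnothing$, since smoothness of $\psi$ is only needed near $\mathcal O$ and the weight can be glued smoothly inside $\omega$.

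For (2), periodic $L^4_{t,x}$ Strichartz estimates on $\R/\ell\mathbb Z$ hold on finite time intervals (Bourgain's $L^4$ estimate on $\mathbb T$). They replace the Dirichlet ones; the Dirichlet estimates are in fact usually derived by odd extension to the circle. The energy-type identities involving $\Imag(f(y)\bar y)$ and the radial structure $z\phi(|z|)$ are local and do not see boundary conditions. With (1) and (2) in hand, the fixed-point and cost arguments of Theorems~\ref{thm:log-main} and~\ref{thm:log-critical-small} go through unchanged, yielding the same estimates \eqref{eq:log-main-cost}--\eqref{eq:critical-main-cost}. The constants $D_*,c_*$ now depend on $(M,g),\omega$.

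\textbf{Main obstacle.} The expected difficulty is justifying that the logarithmic-class proofs use only these two ingredients and nothing specific to the Dirichlet spectrum, such as eigenfunction bounds. I would check this first, verifying for each lemma in those proofs that its periodic analogue holds with the same exponents. Uniqueness in $C([0,T];H)\cap L^4(Q)$ follows by the same $L^4$ Gronwall argument.
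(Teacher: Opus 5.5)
Your proposal follows the paper's proof: arclength reduces the case $\partial M\ne\varnothing$ directly to the interval theorems. On the circle, one cuts at a point of $\omega$ and builds a convex phase modified only inside $\omega$. This makes Proposition~\ref{prop:riemannian-control} and the shifted estimate \eqref{eq:shifted-lipschitz-cost} available; the shifted estimate is what Section~\ref{sec:logarithmic} actually uses. One then replaces Lemma~\ref{lem:interval-l4} by the periodic $L^4$ bound from the resonance relations.

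One correction: your sample phase $(x-\ell/2)^2$ needs both the cut point and its antipode in $\omega$, which is impossible when $\omega$ lies in an arc of length less than $\ell/2$. Instead, take a phase with no critical point on the cut interval, e.g.\ $(x+1)^2$ near $K_\omega$ as in the paper; this needs only the cut point in $\omega$.
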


%In one dimension, arclength coordinates identify the metric with
%that of an interval or a circle. The proof in
%Section~\ref{subsec:riemannian-proof} uses this reduction and
%the periodic $L^4$ estimate. In higher dimensions,
%the logarithmic argument still requires a suitable derivative-free
%space--time estimate. General Strichartz estimates on compact manifolds
%involve a loss of derivatives; see Burq et~al.~\cite{BurqGerardTzvetkov2004}
%in the boundaryless case and Blair et~al.~\cite{BlairSmithSogge2012}
%for manifolds with boundary. They do not directly supply
%the bound needed here for arbitrary $L^2$ data and $L^2$ controls.

\subsection{Main idea of the proof}

The proof relies on a linear control operator whose source-to-state
gain decreases as the Carleman parameter increases. For each prescribed $T>0$, this gain can
be made sufficiently small to obtain a contraction for any
finite Lipschitz constant and arbitrary endpoint data.
The logarithmic extension then follows by combining the
resulting control-cost estimate with quantitative bounds on the
truncation error. We describe the construction in three steps.

\medskip 
\emph{1. A zeroth-order Carleman estimate with a free parameter.}
We use the parameter-dependent weighted contraction strategy
developed by Hern\'andez-Santamar\'ia
et~al.~\cite{HernandezSantamariaLeBalchPeralta2023}
for semilinear stochastic parabolic equations, where
their Carleman construction adapts the time weights of
Badra et~al.~\cite{BadraErvedozaGuerrero2016}
for nonhomogeneous incompressible Navier--Stokes equations.
To the best of our knowledge, this strategy appears to be new for
nonlinear Schr\"odinger equations. For Schr\"odinger equations, Lasiecka
et~al.~\cite{LasieckaTriggianiZhang2004a,LasieckaTriggianiZhang2004b}
developed Carleman estimates for observability.
To apply the weighted contraction argument to the present
exact controllability problem, we use weights singular at
both time endpoints and establish a zeroth-order state estimate
with a zeroth-order observation. The nonlinear argument requires both zeroth-order norms;
a gradient estimate alone does not provide them.

To this end, we lower the order of a strong Carleman estimate
by applying a Bessel regularizer with parameter
$\kappa=s/[t(T-t)]$.
The positive term has size $\kappa^3$, while the squared
commutator errors are bounded by $C\kappa^2$.
Under the local phase condition, the transport field is tangent
to the unobserved boundary. The boundary flux therefore vanishes,
and the Dirichlet resolvent commutator can be used.
The interval case is treated separately by reflecting
only the linear test functions.
By tracking the constants in these estimates, we obtain
the threshold $s\geq aT+bT^2$.

\medskip 
\emph{2. Global exact controllability from the parameter gain.}
Using the Carleman estimate and weighted Hilbert-space
duality, we construct a linear solution operator with zero
initial and final states. Its norm from the weighted source
space to the weighted state space is uniform in $s$.
For fixed $T$, the embedding of the state space into the
source space has norm $O(s^{-3/2})$, which provides the
small factor needed for the nonlinear correction.

To accommodate arbitrary endpoints, we first join a forward
uncontrolled trajectory starting from $y_0$ to a backward
uncontrolled trajectory ending at $y_1$.
The resulting residual is supported away from the time endpoints
and hence belongs to the weighted source space,
including when $f(0)\ne0$.
The nonlinear correction is then a contraction provided that
$CLs^{-3/2}<1$, with no restriction on the endpoint norms.
By choosing $s$ quantitatively and subdividing the time
interval into finitely many pieces, we obtain the control-cost
factor $e^{C(T^{-1}+L^{2/3})}$ without using a conservation law.
The same construction gives a uniform control estimate
for the shifted nonlinearity $f(u+z)-f(u)$ along a prescribed
reference function $u$. We use this estimate to control the
shifted logarithmic truncations in the next step.

\medskip 
\emph{3. The logarithmic extension by Lipschitz truncation.}
We first clamp $\log|z|$ to $[-K,K]$ to obtain a globally
Lipschitz nonlinearity $f_K$ with Lipschitz constant
$O(K^{1+\gamma})$.
The preceding control estimate gives a cost with exponent
$C(\tau^{-1}+K^{2(1+\gamma)/3})$, while the truncation error
satisfies
\[
|f(z)-f_K(z)|
\leq C(1+K)^\gamma e^{-K}(1+|z|^2).
\]
We compare the original and truncated equations through a common
truncation, so the comparison introduces only the factor
$e^{C\tau K^\gamma}$.
The remaining tail is controlled by the one-dimensional
space--time $L^4$ estimate.
Since $2(1+\gamma)/3<1$ when $\gamma<1/2$,
the truncation error decays faster than the control cost grows.

These estimates allow us to construct controls on shrinking
time intervals whose squared norms are summable.
The resulting controls drive the true solution toward an
uncontrolled reference trajectory ending at the desired target.
By tracking the first truncation level, we obtain a control cost
with nearly linear dependence on the data.

At the endpoint $\gamma=1/2$, the leading exponent in the control cost
is linear in $K$, and the argument closes when its coefficient
is sufficiently small.
By keeping the lower-order term separate, we obtain the
two-coefficient critical theorem.
Separating a compact interval of amplitudes from the two
tails then yields the asymptotic condition.
For the canonical logarithmic nonlinearity, amplitude scaling
gives the stronger control-cost bound.

\subsection{Organization of the paper}\label{sec:organization}

The paper is organized as follows.
Section~\ref{sec:estimates} establishes the parameter-dependent
Carleman estimates and their Riemannian extension.
Section~\ref{sec:controllability} constructs the weighted linear
control operator and proves global exact controllability for
globally Lipschitz nonlinearities, together with the quantitative
control-cost estimates.
Section~\ref{sec:logarithmic} proves the logarithmic results
in the subcritical and critical regimes.
The planar phase condition for the control geometry in
Example~\ref{ex:control-geometries} is verified in
Appendix~\ref{app:planar-phase}.

\FloatBarrier

\section{Parameter-dependent Carleman estimates}\label{sec:estimates}

\subsection{Weights and regularization}\label{sec:weights}

All complex scalar products are linear in the first variable.
Unless otherwise specified, the spatial $L^2$ norms in
full-space estimates are taken over $\R^n$. The constants may depend
on the fixed domain, control region, cutoffs, spatial phase, and
$T$. They are independent of $s$ and $\kappa\geq1$. Uniformity
for all $T>0$ is established in
Proposition~\ref{prop:uniform-short-time-carleman}.

For the full-space estimates below and the collar estimate in
Section~\ref{sec:carleman}, we choose the following quadratic phase
independently of the local phase in \eqref{eq:phase-geometry}.
Choose a bounded open set $U$ with $\overline\Om\subset U$, a point $x_0\notin\overline U$, and a constant $M$ such that
\begin{equation}\label{eq:psi}
 \psi(x)=|x-x_0|^2-M<-1\quad(x\in\overline U).
\end{equation}
There are constants $\alpha,\beta,c_*>0$ such that
\begin{equation*}
 1<\alpha\le-\psi\le\beta,\qquad
 |\nabla\psi|\ge c_*,\qquad D^2\psi=2I
 \quad\text{on }\overline U.
\end{equation*}
Define
\begin{equation*}
 a(t)=\frac1{t(T-t)},\qquad
 \kappa(t)=sa(t),\qquad
 \theta(t,x)=e^{\kappa(t)\psi(x)}.
\end{equation*}
The dependence of $\theta$ on $s$ is left implicit.

\begin{lemma}\label{lem:timeweights}
For $s>0$ and $0<t<T$,
\begin{equation}\label{eq:kappabounds}
 \kappa\ge\frac{4s}{T^2},\qquad
 |\kappa'|\le\frac{T}{s}\kappa^2,\qquad
 0<\kappa''\le\frac{2T^2}{s^2}\kappa^3.
\end{equation}
In particular,
\begin{equation*}
 |\kappa''|+\frac{|\kappa'|^2}{\kappa}
 \le\frac{3T^2}{s^2}\kappa^3.
\end{equation*}
For each fixed $s>0$, the product of any polynomial in $\kappa$ with $\theta$ tends to zero at both temporal endpoints, uniformly on $\overline U$.
\end{lemma}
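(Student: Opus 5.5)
The plan is to verify everything by direct computation from the explicit formula $a(t)=1/(t(T-t))$, since $\kappa=sa$ and all three bounds in \eqref{eq:kappabounds} reduce to elementary inequalities for $a$. Write $p(t)=t(T-t)$, so $a=1/p$, $p'=T-2t$, $p''=-2$, and $0<p\le T^2/4$ on $(0,T)$.

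First, $p\le T^2/4$ gives $a\ge 4/T^2$, hence $\kappa\ge 4s/T^2$.

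Next, $a'=-p'/p^2=-(T-2t)a^2$. Since $|T-2t|\le T$, we get $|\kappa'|=s|a'|\le sTa^2=(T/s)\kappa^2$.

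Then $a''=-p''/p^2+2(p')^2/p^3=2a^2+2(T-2t)^2a^3>0$. For the upper bound, use the identity
\[
(T-2t)^2=T^2-4p .
\]
This gives $a''=2a^2+2T^2a^3-8a^2=2T^2a^3-6a^2\le 2T^2a^3$. Multiplying by $s$ yields $\kappa''\le 2sT^2a^3=(2T^2/s^2)\kappa^3$, and positivity follows from the first expression for $a''$.

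For the combined bound, $|\kappa'|^2/\kappa\le (T^2/s^2)\kappa^3$ by the second estimate. Adding this to the third estimate gives the constant $3T^2/s^2$.

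For the decay statement, fix $s>0$. On $\overline U$ we have $\psi\le-\alpha<-1$, so $0<\theta\le e^{-\alpha\kappa}$. As $t\to0^+$ or $t\to T^-$, we have $\kappa\to\infty$, and $\kappa^m e^{-\alpha\kappa}\to0$ for every $m$. This limit is independent of $x$, so the convergence is uniform on $\overline U$, and it extends to any polynomial in $\kappa$.

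The only step needing care is the sharp upper bound for $\kappa''$ with constant exactly $2T^2/s^2$, which relies on the identity $(T-2t)^2=T^2-4p$. I do not expect any genuine obstacle.
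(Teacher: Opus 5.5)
Your proposal is correct and follows essentially the same route as the paper: direct differentiation of $1/(t(T-t))$, with the upper bound on $\kappa''$ obtained from an elementary identity (your $(T-2t)^2=T^2-4t(T-t)$ is equivalent to the paper's $t(T-t)+(T-2t)^2=T^2-3t(T-t)$, both giving $\kappa''=2s\,a^3\bigl(T^2-3t(T-t)\bigr)$). The endpoint decay also matches the paper, via $\theta\le e^{-\alpha\kappa}$ on $\overline U$ and $\kappa\to\infty$ at both temporal endpoints.
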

\begin{proof}
Put $d=t(T-t)$. Direct differentiation gives
\[
 \kappa'=-s(T-2t)d^{-2},\qquad
 \kappa''=2s d^{-2}+2s(T-2t)^2d^{-3}.
\]
It follows from the identity $d+(T-2t)^2=T^2-3t(T-t)\le T^2$ that
\eqref{eq:kappabounds} holds. Finally, $\theta\le e^{-\alpha\kappa}$
shows that exponential decay dominates every polynomial in $\kappa$
at the temporal endpoints.
\end{proof}

\begin{lemma}\label{lem:bessel}
For $\kappa\ge1$, let
\[
 S_\kappa=\kappa(\kappa^2-\Delta_{\R^n})^{-1/2}.
\]
Then
\begin{equation}\label{eq:multbounds}
 \norm{S_\kappa}_{2\to2}\le1,\qquad
 \norm{S_\kappa\operatorname{div}}_{L^2(\R^n;\C^n)\to L^2(\R^n)}\le\kappa,
 \qquad
 \norm{\partial_\kappa S_\kappa}_{2\to2}\le C\kappa^{-1}.
\end{equation}
For $b\in C_b^\infty(\R^n)$,
\begin{equation}\label{eq:commbounds}
 \norm{[b,S_\kappa]}_{2\to2}
 \le C\kappa^{-1}\norm{\nabla b}_\infty,
 \qquad
 \norm{[b,S_\kappa]\partial_j}_{2\to2}
 \le C\norm{\nabla b}_\infty.
\end{equation}
Moreover, for every $v\in L^2(\R^n)$,
\begin{equation}\label{eq:energyidentity}
 \kappa\norm{\nabla S_\kappa v}_2^2
 +\kappa^3\norm{S_\kappa v}_2^2
 =\kappa^3\norm v_2^2.
\end{equation}
The commutators are initially defined on $C_c^\infty(\R^n)$
and extended by continuity. The constants depend only on the
dimension, apart from the displayed coefficient norm.
\end{lemma}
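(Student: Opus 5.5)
The whole proof runs on the Fourier side, because $S_\kappa$ is the Fourier multiplier with symbol $m_\kappa(\xi)=\kappa(\kappa^2+|\xi|^2)^{-1/2}$. Since $0<m_\kappa\le1$, Plancherel gives $\norm{S_\kappa}_{2\to2}\le1$ at once. The operator $S_\kappa\operatorname{div}$ has symbol $i\xi_j m_\kappa(\xi)$ on the $j$-th component. Its matrix norm is $|\xi|\kappa(\kappa^2+|\xi|^2)^{-1/2}\le\kappa$, which gives the second bound. For the third, differentiate in $\kappa$:
\[
\partial_\kappa m_\kappa=\frac{|\xi|^2}{(\kappa^2+|\xi|^2)^{3/2}},
\]
which is bounded by $(\kappa^2+|\xi|^2)^{-1/2}\le\kappa^{-1}$. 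So $C=1$ works here.

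The energy identity \eqref{eq:energyidentity} also follows from Plancherel. We have
\[
\kappa\norm{\nabla S_\kappa v}^2+\kappa^3\norm{S_\kappa v}^2
=\int\kappa(|\xi|^2+\kappa^2)\,\kappa^2(\kappa^2+|\xi|^2)^{-1}|\hat v|^2\,\dd\xi
=\kappa^3\norm v^2 .
\]

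The commutator bounds \eqref{eq:commbounds} are the main step. My plan is a subordination (integral) representation of $(\kappa^2-\Delta)^{-1/2}$ in terms of resolvents:
\[
(\kappa^2-\Delta)^{-1/2}=\frac1\pi\int_0^\infty\lambda^{-1/2}(\lambda+\kappa^2-\Delta)^{-1}\,\dd\lambda .
\]
Write $R_\mu=(\mu-\Delta)^{-1}$ with $\mu=\lambda+\kappa^2$. Then
\[
[b,R_\mu]=R_\mu[\Delta,b]R_\mu=R_\mu\bigl(\operatorname{div}((\nabla b)\,\cdot)+(\nabla b)\cdot\nabla\bigr)R_\mu .
\]
The needed resolvent bounds are $\norm{R_\mu}\le\mu^{-1}$ and $\norm{\nabla R_\mu}\le\tfrac12\mu^{-1/2}$, the latter also for its adjoint $R_\mu\operatorname{div}$. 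Together they give $\norm{[b,R_\mu]}\le C\norm{\nabla b}_\infty\mu^{-3/2}$. Integrating,
\[
\int_0^\infty\lambda^{-1/2}(\lambda+\kappa^2)^{-3/2}\,\dd\lambda=C\kappa^{-2},
\]
and multiplying by $\kappa$ yields $\norm{[b,S_\kappa]}\le C\kappa^{-1}\norm{\nabla b}_\infty$.

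For $[b,S_\kappa]\partial_j$ the same formula applies with one extra derivative on the right. The expression $R_\mu[\Delta,b]R_\mu\partial_j$ contains terms with two derivatives inside the outer resolvents. I bound these by $\norm{\nabla R_\mu}\cdot\norm{\nabla b}_\infty\cdot\norm{\nabla R_\mu}\lesssim\mu^{-1}$, or by $\norm{R_\mu}\cdot\norm{\nabla b}_\infty\cdot\norm{\nabla^2R_\mu}\lesssim\mu^{-1}\cdot1$. In either case the operator norm is $\lesssim\mu^{-1}$.

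Here only $\nabla b$ may appear, not $\nabla^2b$. To arrange this, I write the commutator in divergence form, $[\Delta,b]u=\operatorname{div}(u\nabla b)+\nabla b\cdot\nabla u$, and move every derivative onto a resolvent: $\partial R_\mu$, $R_\mu\partial$ or $\partial R_\mu\partial$, each bounded by $\mu^{-1/2}$ or $1$. This gives $\lesssim\norm{\nabla b}_\infty\mu^{-1}$. The $\lambda$-integral $\int\lambda^{-1/2}(\lambda+\kappa^2)^{-1}\,\dd\lambda=\pi\kappa^{-1}$, multiplied by $\kappa$, gives a constant. This is the claimed bound.

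I will carry out all computations on $C_c^\infty$ and then extend by density. The delicate point, the main obstacle, is this derivative bookkeeping: I must check that no term hides a $\nabla^2b$ and that the $\lambda$-integrals converge at both ends. If the bookkeeping fails for one term, the fallback is a Calderón-type first-order commutator estimate. Its symbol $\kappa\xi_j(\kappa^2+|\xi|^2)^{-1/2}$ is a uniformly bounded order-zero multiplier, rescaled by $\kappa$, and so the same Lipschitz-commutator bound follows.
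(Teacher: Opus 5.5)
Your proposal is correct. The Plancherel part (the three multiplier bounds and the energy identity) is the same as the paper's. You differ in the commutator estimates.

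The paper uses heat-kernel subordination to write $S_\kappa$ as convolution with an explicit positive kernel $K_\kappa$. It represents $[b,S_\kappa]$ by the kernel $(b(x)-b(y))K_\kappa(x-y)$ and concludes by Young's inequality from the moment bounds $\int|z|K_\kappa\le C\kappa^{-1}$ and $\int|z|\,|\nabla K_\kappa|\le C$. For the $\partial_j$ version it first integrates by parts in $y$, with a truncation near $r=0$.

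You instead use resolvent subordination together with $[b,R_\mu]=R_\mu[b,\Delta]R_\mu$ in divergence form. Note the sign: it is $[b,\Delta]$, not $[\Delta,b]$, which is harmless for norms.

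Your bookkeeping closes, so the Calder\'on fallback is unnecessary. For the second commutator the two terms are $R_\mu\operatorname{div}\bigl((\nabla b)\,R_\mu\partial_j\,\cdot\bigr)$ and $R_\mu\bigl(\nabla b\cdot\nabla R_\mu\partial_j\,\cdot\bigr)$. The first has norm at most $\tfrac14\mu^{-1}\norm{\nabla b}_\infty$. The second has norm at most $\mu^{-1}\norm{\nabla b}_\infty$, because $\nabla R_\mu\partial_j$ has symbol bounded by one. Only $\nabla b$ appears, both $\lambda$-integrals converge at $0$ and $\infty$, and the subordination integrals converge in operator norm, so commuting $b$ through them is immediate.

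Your route is purely spectral and gives dimension-free constants. It is exactly the mechanism the paper itself uses for the Dirichlet regularizer in Lemma~\ref{lem:dirichlet-commutators} and on manifolds, where no translation-invariant kernel exists. Your step $[\partial_j,R_\mu]=0$ is special to $\R^n$; in the Dirichlet case the paper works with $[A,X]$ for a tangent field $X$ instead.

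The kernel route pays off right away: the same representation of $K_\kappa$ yields the off-diagonal decay in Lemma~\ref{lem:tails}. Your resolvent argument does not give that by itself; it would need a separate Caccioppoli-type iteration as in Lemma~\ref{lem:dirichlet-tails}.
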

\begin{proof}
The multiplier of $S_\kappa$ is
\[
 m_\kappa(\xi)=\frac{\kappa}{(\kappa^2+|\xi|^2)^{1/2}}.
\]
The bounds in \eqref{eq:multbounds} and the identity \eqref{eq:energyidentity} follow from Plancherel's theorem. For the parameter derivative, we use
\[
 \partial_\kappa m_\kappa(\xi)
 =\frac{|\xi|^2}{(\kappa^2+|\xi|^2)^{3/2}}.
\]
To prove \eqref{eq:commbounds}, we use the positive convolution kernel
\begin{equation}\label{eq:kernel}
 K_\kappa(z)=\frac{\kappa}{\sqrt\pi}
 \int_0^\infty r^{-1/2}e^{-\kappa^2r}
 (4\pi r)^{-n/2}e^{-|z|^2/(4r)}\dd r.
\end{equation}
The heat-kernel moment estimates give
\begin{equation}\label{eq:moments}
 \int_{\R^n}K_\kappa\dd z=1,\qquad
 \int_{\R^n}|z|K_\kappa(z)\dd z\le C\kappa^{-1},\qquad
 \int_{\R^n}|z||\nabla K_\kappa(z)|\dd z\le C.
\end{equation}
Indeed, the first moment of the heat kernel is $C_n\sqrt r$, whereas
the integral of $|z|$ times the magnitude of its gradient is uniform
in $r$. Substituting these bounds into \eqref{eq:kernel} gives
\eqref{eq:moments}.

The kernel of $[b,S_\kappa]$ is
\[
 (b(x)-b(y))K_\kappa(x-y).
\]
The first commutator bound follows from $|b(x)-b(y)|\le\norm{\nabla b}_\infty|x-y|$ and Young's inequality.
To obtain the second bound, we take $v\in C_c^\infty(\R^n)$ and integrate by parts in $y_j$. The resulting kernel is the sum of
\[
 (\partial_jb)(y)K_\kappa(x-y),\qquad
 (b(x)-b(y))(\partial_jK_\kappa)(x-y).
\]
The moment bounds in \eqref{eq:moments} control both kernels on
$L^2$. We justify the integration by parts by first truncating the
heat-kernel integral away from $r=0$ and then removing the truncation
using the same bounds. Density gives the required extensions.
\end{proof}

\begin{lemma}\label{lem:tails}
Let $K\Subset U$, and let $\zeta\in C_c^\infty(U)$ equal one
on a fixed neighborhood of $K$. For every integer $N\geq1$,
there is $C_N>0$, depending only on $n$, $N$, $K$, and $\zeta$,
such that
\begin{equation}\label{eq:tails}
 \norm{(1-\zeta)S_\kappa v}_{H^1(\R^n)}
 \le C_N\kappa^{-N}\norm v_2
 \quad\bigl(v\in L^2(\R^n),\ \supp v\subset K,\ \kappa\ge1\bigr).
\end{equation}
If $\eta\in C_c^\infty(\R^n)$ has support at a positive
distance from $K$, the same bound holds for
$\|\eta S_\kappa v\|_2+\|\eta\nabla S_\kappa v\|_2$,
with a constant that may also depend on $\eta$.
\end{lemma}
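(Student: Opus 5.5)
We prove \eqref{eq:tails} from the explicit kernel \eqref{eq:kernel}: away from its diagonal the Bessel kernel decays exponentially in $\kappa$. Set $\delta=\dist(K,\supp(1-\zeta))>0$, which is positive because $\zeta=1$ on a neighborhood of $K$. For $x\in\supp(1-\zeta)$ and $y\in K$ we have $|x-y|\geq\delta$. We may therefore write
\[
 (1-\zeta)S_\kappa v=(1-\zeta)\bigl(K_\kappa\one_{\{|z|\geq\delta\}}\bigr)*v ,
\]
and similarly
\[
 \nabla\bigl[(1-\zeta)S_\kappa v\bigr]
 =-\nabla\zeta\,\bigl(K_\kappa\one_{\{|z|\geq\delta\}}\bigr)*v
 +(1-\zeta)\bigl(\nabla K_\kappa\one_{\{|z|\geq\delta\}}\bigr)*v .
\]
These identities hold pointwise because $K_\kappa$ is smooth away from $z=0$. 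By Young's inequality, and since $\zeta$ has bounded derivatives, it then suffices to show
\[
 \int_{|z|\geq\delta}\bigl(K_\kappa(z)+|\nabla K_\kappa(z)|\bigr)\dd z\leq C_N\kappa^{-N}.
\]

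The key step is the kernel bound for $|z|\geq\delta$. We use the subordination formula \eqref{eq:kernel} and differentiate under the integral, using $|\nabla_z e^{-|z|^2/(4r)}|=\frac{|z|}{2r}e^{-|z|^2/(4r)}$. This reduces matters to bounding
\[
 \kappa\int_0^\infty r^{-1/2-n/2}\Bigl(1+\frac{|z|}{r}\Bigr)e^{-\kappa^2r-|z|^2/(4r)}\dd r .
\]
We split the exponent as
\[
 \kappa^2r+\frac{|z|^2}{4r}\geq\frac{\kappa^2r}{2}+\frac{|z|^2}{8r}+\frac{\kappa|z|}{2},
\]
which follows from applying AM--GM to half of each term. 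This extracts a factor $e^{-\kappa|z|/2}$. The remaining $r$-integral, with $e^{-|z|^2/(8r)}$ controlling the singularity at $r=0$ and $e^{-\kappa^2 r/2}$ with $\kappa\geq1$ controlling $r\to\infty$, is bounded by $C_\delta(1+|z|)^{m}$ for $|z|\geq\delta$, uniformly in $\kappa\geq1$. It follows that
\[
 K_\kappa(z)+|\nabla K_\kappa(z)|\leq C_\delta\,\kappa(1+|z|)^{m}e^{-\kappa|z|/2}\qquad(|z|\geq\delta).
\]
Integrating over $|z|\geq\delta$ gives a bound $C\kappa e^{-\kappa\delta/4}$, and this is at most $C_N\kappa^{-N}$.

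The second claim follows from the same argument. With $\eta\in C_c^\infty$ and $\dist(\supp\eta,K)=\delta'>0$, we have
\[
 \eta S_\kappa v=\eta\bigl(K_\kappa\one_{\{|z|\geq\delta'\}}\bigr)*v,
 \qquad
 \eta\nabla S_\kappa v=\eta\bigl(\nabla K_\kappa\one_{\{|z|\geq\delta'\}}\bigr)*v .
\]
Young's inequality together with the kernel bound then gives the estimate, with a constant depending on $\|\eta\|_\infty$ and $\delta'$.

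The only delicate point is the uniform $r$-integral bound. The exponent splitting must leave enough decay both at $r\to0$ and at $r\to\infty$ while producing the factor $e^{-c\kappa|z|}$. A secondary point is justifying differentiation under the integral; this is harmless because $|z|\geq\delta$ keeps us away from the singularity at $r=0$.
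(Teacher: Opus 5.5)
Your proposal is correct and follows essentially the same route as the paper. Both arguments bound the off-diagonal mass $\int_{|z|\ge\delta}\bigl(K_\kappa+|\nabla K_\kappa|\bigr)\,\dd z$ from the subordination formula \eqref{eq:kernel}, using that $\kappa^2r+|z|^2/(8r)$ dominates a multiple of $\kappa|z|$, and then apply Young's inequality before and after one derivative. You simply make explicit what the paper leaves implicit: the AM--GM splitting of the exponent, the uniformity of the $r$-integral for $\kappa\ge1$, and the product-rule term involving $\nabla\zeta$.
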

\begin{proof}
For $d>0$ and $|\gamma|\le1$, the kernel representation \eqref{eq:kernel} implies
\[
 \int_{\{z\in\R^n:\,|z|\ge d\}}|\partial_z^\gamma K_\kappa(z)|\dd z
 \le C_{N,d}\kappa^{-N}.
\]
The heat-kernel tail contributes $e^{-d^2/(8r)}$, and
$\kappa^2r+d^2/(8r)$ is bounded below by a positive multiple of
$\kappa d$. The resulting exponential decay absorbs the polynomial
factors from the remaining integrals. Since $x\notin\{\zeta=1\}$
and $y\in K$ are separated by a fixed positive distance, Young's
inequality gives \eqref{eq:tails}, both before and after one spatial
derivative.
\end{proof}

\subsection{A boundary collar}\label{sec:carleman}

We first consider a control region containing a full boundary collar:
\begin{equation}\label{eq:collar}
 \{x\in\Om:\dist(x,\partial\Om)<\delta\}\subset\omega
 \quad\text{for some }\delta>0.
\end{equation}
This geometry satisfies Assumption~\ref{ass:phase-geometry}.
Indeed, $K_\omega$ has a neighborhood compactly contained in $\Om$, on which
$\psi(x)=|x-x_*|^2$, with $x_*\notin\overline\Om$, is strictly convex
and has no critical point. The boundary condition on the phase is
then vacuous.
We retain the weights of Section~\ref{sec:weights} and
set $P=i\partial_t-\Delta$. All integrals below are taken with respect to Lebesgue measure.
Constants may depend on the fixed sets, cutoffs, weight, and $T$.
They do not depend on $s$ once its stated lower bound is imposed.

\begin{proposition}\label{thm:carleman}
Assume \eqref{eq:collar}. There are $s_0>0$ and $C>0$ such that, for every $s\geq s_0$ and every
$p\in C^\infty([0,T]\times\overline\Omega;\C)$,
\begin{equation}\label{eq:carleman}
 \int_Q\kappa^3\theta^2\abs p^2
 \leq C\left(
   \int_Q\theta^2\abs{Pp}^2+
   \int_{Q_\omega}\kappa^3\theta^2\abs p^2
 \right).
\end{equation}
The test function $p$ is unrestricted at $t=0,T$ and on the spatial
boundary.
\end{proposition}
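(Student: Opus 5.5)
The plan is to localize away from the collar, establish a strong ($H^1$-level) Carleman estimate for compactly supported functions on $\R^n$ with the quadratic phase \eqref{eq:psi}, and then lower its order with the Bessel regularizer $S_\kappa$ of Lemma~\ref{lem:bessel}.

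\emph{Localization.} By \eqref{eq:collar} we choose $\chi\in C_c^\infty(\Om)$ with $\chi=1$ on $\{\dist(x,\partial\Om)\geq\delta/2\}$. Then $\nabla\chi$ is supported in $\omega$, and $\supp\chi$ lies in a fixed compact $K\Subset\Om\subset U$. Set $z=\theta\chi p$, extended by zero to $\R^n$. By Lemma~\ref{lem:timeweights}, $z$ and all its derivatives multiplied by polynomials in $\kappa$ vanish at $t=0,T$, so time integrations by parts produce no endpoint terms. This is why $p$ may be unrestricted there. Let $P_\theta=\theta P\theta^{-1}$. Writing $\kappa'\psi$ for the time derivative of the weight, we get
\[
P_\theta=i\partial_t-\Delta+2\kappa\nabla\psi\cdot\nabla+\kappa\Delta\psi-\kappa^2|\nabla\psi|^2-i\kappa'\psi .
\]
Moreover,
\[
P_\theta z=\theta\chi Pp-\theta\bigl(2\nabla\chi\cdot\nabla p+(\Delta\chi)p\bigr).
\]
The term with $\nabla p$ is rewritten as $-2\operatorname{div}(\theta p\nabla\chi)$ plus zeroth-order terms of size $\kappa\theta|p|$ supported in $\omega$.

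\emph{Strong estimate.} For $q$ supported near $U$, vanishing at the time endpoints, we prove
\[
\int\bigl(\kappa|\nabla q|^2+\kappa^3|q|^2\bigr)\leq C\int|P_\theta q|^2 .
\]
We split $P_\theta$ into its symmetric and antisymmetric parts and expand $2\Real$ of their cross term. Since $D^2\psi=2I$ and $|\nabla\psi|\geq c_*$, the commutator gives positive terms $4\kappa|\nabla q|^2$ and $c\kappa^3|q|^2$. The errors involving $\kappa'$ and $\kappa''$ are bounded through \eqref{eq:kappabounds} by $CT^2s^{-2}\kappa^3|q|^2$ and $CTs^{-1}\kappa^2(\cdots)$, and are absorbed once $s\geq s_0$. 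Because $\psi$ is only fixed on $U$, its coefficients are replaced by a smooth bounded extension; Lemma~\ref{lem:tails} makes the resulting contributions $O(\kappa^{-N})$.

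\emph{Lowering the order.} Apply the strong estimate to $q=S_\kappa z$. By \eqref{eq:energyidentity}, the left side is at least $c\int\kappa^3|z|^2\geq c\int_Q\kappa^3\theta^2|p|^2$ minus the part where $\chi\ne1$, which lies in $\omega$. On the right we write $P_\theta S_\kappa z=S_\kappa P_\theta z+[P_\theta,S_\kappa]z$ and estimate each term as follows.
\begin{enumerate}
\item The term $\norm{S_\kappa(\theta\chi Pp)}\leq\norm{\theta Pp}$ is acceptable.
\item The divergence term satisfies $\norm{S_\kappa\operatorname{div}(\theta p\nabla\chi)}\leq\kappa\norm{\theta p}_{L^2(\omega)}$. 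Its square is $\kappa^2\theta^2|p|^2\leq C\kappa^3\theta^2|p|^2$ on $Q_\omega$.
\item The zeroth-order pieces of $[P,\chi]$ contribute $C\kappa^2\int_{Q_\omega}\theta^2|p|^2$.
\item For the commutators, \eqref{eq:commbounds} gives $\norm{[\kappa\nabla\psi\cdot\nabla,S_\kappa]}\leq C\kappa$ and $\norm{[\kappa^2|\nabla\psi|^2,S_\kappa]}\leq C\kappa$. The term $[\kappa\Delta\psi,S_\kappa]$ is $O(1)$, and $[\kappa'\psi,S_\kappa]$ is $O(|\kappa'|/\kappa)$.
\item The time commutator $[i\partial_t,S_\kappa]=i\kappa'\partial_\kappa S_\kappa$ has norm at most $C|\kappa'|/\kappa\leq CTs^{-1}\kappa$.
\end{enumerate}
Thus the squared commutator errors are bounded by $C\kappa^2\int|z|^2\leq C(T^2/4s)\int\kappa^3|z|^2$. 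This is absorbed into the left side for $s$ large, which yields \eqref{eq:carleman}.

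The main obstacle is the strong full-space Carleman estimate itself. One must verify that, for the real weight $e^{\kappa\psi}$ with $\kappa$ depending on $t$, the cross term of the symmetric and antisymmetric parts produces both positive terms with only $\kappa'$-errors of the stated size. Extra care is needed so that $\kappa^3$ appears with no loss and the constants are tracked to give an explicit threshold on $s$. I would first carry out this multiplier computation with the standard choice $\mathcal S=-\Delta-\kappa^2|\nabla\psi|^2-i\kappa'\psi$ and $\mathcal A=i\partial_t+2\kappa\nabla\psi\cdot\nabla+\kappa\Delta\psi$.
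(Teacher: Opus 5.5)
Your three steps are the paper's: the collar cutoff with the cutoff term in divergence form, a strong estimate for the quadratic phase, and order lowering by $S_\kappa$ with $O(\kappa)$ commutators absorbed via $\inf_t\kappa=4s/T^2$. Your bookkeeping in items 1--5 also matches. The concrete failure is the splitting you propose for the step you call the main obstacle. On flat test functions, $i\partial_t$ is self-adjoint and multiplication by $-i\kappa'\psi$ is skew-adjoint. So your $\mathcal S$ is not symmetric and your $\mathcal A$ is not antisymmetric; you have carried over the parabolic pattern, where $\partial_t$ is skew and $\varphi_t$ is real. With your assignment, $2\Real(\mathcal Sw,\mathcal Aw)$ is not a commutator. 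It contains $2\Imag\int\nabla w\cdot\overline{\nabla\partial_tw}-2\Imag\int\kappa^2|\nabla\psi|^2w\,\overline{\partial_tw}$, which has no sign and involves $\partial_tw$. It also contains $4\Imag\int\kappa\kappa'\psi\,w\,\nabla\psi\cdot\nabla\overline w$, which is only bounded by $C(T/s)\kappa^3|w||\nabla w|$ and cannot be absorbed by $\kappa|\nabla w|^2+\kappa^3|w|^2$. You must swap the two terms, as the paper does: $A=i\partial_t-\Delta-\kappa^2|\nabla\psi|^2$ and $B=2\kappa\nabla\psi\cdot\nabla+\kappa\Delta\psi-i\kappa'\psi$. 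Then $2\Real(Aw,Bw)=([A,B]w,w)$ gives exactly \eqref{eq:strong-cross-collar}. Its only time-weight errors are $\int\kappa''\psi|w|^2$ and $-4\int\kappa'\Imag(\nabla\psi\cdot\nabla w\,\overline w)$, both absorbed using Lemma~\ref{lem:timeweights}.

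A second, smaller gap is that you apply the strong estimate to $q=S_\kappa z$. That function is not compactly supported, while the estimate is proved only for functions supported in $U$. A bounded extension of $\psi$ to $\R^n$ cannot stay uniformly convex, so extending the coefficients does not by itself give the estimate on $\R^n$. The paper instead applies it to $w=\chi_2S_\kappa z$, with $\chi_2=1$ near $\supp\chi$, and uses Lemma~\ref{lem:tails} twice. The first use is the energy comparison $\tfrac14\kappa^3\|z\|^2\le\kappa\|\nabla w\|^2+\kappa^3\|w\|^2$. The second is the bound $\|[P_\theta,\chi_2]S_\kappa z\|\le C_N\kappa^{1-N}\|z\|$ for the operator with extended coefficients; the factor $\kappa$ comes from the transport term $2\kappa\nabla\psi\cdot\nabla\chi_2$. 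Alternatively, you could use the pointwise integrand of the integration-by-parts identity with a single $C_b^\infty$ extension of $\psi$, and bound the region where the extension is not convex by tails. Your tail remark points in this direction, but the step must be made explicit. You also need the temporal flatness of the localized function $\chi_2S_\kappa z$, which follows from the flatness of $z$ and the bounds on $\partial_\kappa^rS_\kappa$.
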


We first prove an estimate for functions supported away from the
spatial boundary. Direct conjugation gives
\begin{equation*}
 L=\theta P\theta^{-1}
 =i\partial_t-\Delta+2\kappa\nabla\psi\cdot\nabla
  +\kappa\Delta\psi-\kappa^2\abs{\nabla\psi}^2-i\kappa'\psi.
\end{equation*}

\begin{lemma}\label{lem:strong}
Fix $T>0$ and the phase \eqref{eq:psi}.
There exist $s_0>0$ and $C>0$, depending only on $T$, $U$,
and the fixed phase, such that, for every $s\geq s_0$,
\begin{equation}\label{eq:strong-collar}
 \int_0^T\!\int_U
       \bigl(\kappa\abs{\nabla w}^2+\kappa^3\abs w^2\bigr)
       \,\dd x\,\dd t
 \leq C\int_0^T\!\int_U\abs{Lw}^2\,\dd x\,\dd t
\end{equation}
for every $w\in C_c^\infty((0,T)\times U;\C)$.
The same estimate holds for smooth functions with a fixed compact
spatial support in $U$ that are flat at both temporal endpoints.
Here flatness means smooth extension to $[0,T]\times U$ with
every temporal derivative equal to zero at $t=0,T$.
\end{lemma}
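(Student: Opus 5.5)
The strategy is the standard symmetric/antisymmetric splitting of the conjugated operator $L$, followed by computation of the commutator. Since $\psi$ is real, $\kappa$ is real and $w$ has compact support in $(0,T)\times U$, the operator decomposes as $L=S+A$ with
\[
 S=-\Delta-\kappa^2\abs{\nabla\psi}^2,\qquad
 A=i\partial_t+2\kappa\nabla\psi\cdot\nabla+\kappa\Delta\psi-i\kappa'\psi .
\]
Here $S$ is formally symmetric and $A$ is formally antisymmetric on $L^2((0,T)\times U)$; this uses $\operatorname{div}(\nabla\psi)=\Delta\psi$, and the $i\kappa'\psi$ term is antisymmetric because it is imaginary. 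Expanding $\norm{Lw}^2=\norm{Sw}^2+\norm{Aw}^2+\ip{[S,A]w}{w}$, it therefore suffices to show
\[
 \ip{[S,A]w}{w}\geq c\int\bigl(\kappa\abs{\nabla w}^2+\kappa^3\abs w^2\bigr)-R(w),
\]
with a remainder $R$ that can be absorbed once $s\geq s_0$.

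The commutator is computed term by term. First, $[-\Delta,2\kappa\nabla\psi\cdot\nabla]$ yields $-4\kappa\,\partial_j\partial_k\psi\,\partial_j\partial_k$ plus first-order terms. After integration by parts this contributes $4\kappa\int D^2\psi\nabla w\cdot\overline{\nabla w}=8\kappa\int\abs{\nabla w}^2$, because $D^2\psi=2I$. Second, $[-\kappa^2\abs{\nabla\psi}^2,2\kappa\nabla\psi\cdot\nabla]$ equals $2\kappa^3\nabla\psi\cdot\nabla(\abs{\nabla\psi}^2)=4\kappa^3\,D^2\psi\nabla\psi\cdot\nabla\psi=8\kappa^3\abs{\nabla\psi}^2\geq8c_*^2\kappa^3$. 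These two are the principal positive terms. The remaining pieces are lower order:
\begin{enumerate}
 \item $[-\Delta,\kappa\Delta\psi]=0$, since $\Delta\psi$ is constant;
 \item $[-\Delta,-i\kappa'\psi]$ is first order with coefficient $\abs{\kappa'}\leq (T/s)\kappa^2$, so it is bounded by $(T/s)\int(\kappa\abs{\nabla w}^2+\kappa^3\abs w^2)$;
 \item $[S,i\partial_t]=2i\kappa\kappa'\abs{\nabla\psi}^2$, of size $\abs{\kappa\kappa'}\leq (T/s)\kappa^3$.
\end{enumerate}
All the time-weight bounds come from Lemma~\ref{lem:timeweights}, namely $\abs{\kappa'}\leq (T/s)\kappa^2$, $\kappa''\leq 2(T^2/s^2)\kappa^3$ and $\kappa\geq 4s/T^2$. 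Any $\kappa''$ terms arising from the $i\partial_t$ part, as well as products like $\kappa\abs{\nabla w}^2$ against lower-order pieces, carry a factor $T/s$ or $T^2/s^2$, or else lose a power of $\kappa$, which is made small by $\kappa\geq 4s/T^2$. Choosing $s_0$ large compared with $T$ and the fixed constants of $\psi$ on $\overline U$ (namely $\alpha,\beta,c_*$ and $\sup\abs{\nabla\psi}$) absorbs everything, giving
\[
 \ip{[S,A]w}{w}\geq 4\int\bigl(\kappa\abs{\nabla w}^2+c_*^2\kappa^3\abs w^2\bigr).
\]

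The main obstacle is bookkeeping: the complex inner product and the $i\partial_t$ term must be handled carefully, so that $\Real\ip{Sw}{Aw}$ is expressed exactly as $\frac12\ip{[S,A]w}{w}$ with correct signs, and the $\partial_t$ integrations by parts must produce no boundary terms. For $w\in C_c^\infty$ there are none. For the extension to functions flat at $t=0,T$ with fixed compact spatial support, I would approximate by $\chi_\epsilon(t)w$ with temporal cutoffs. Flatness makes $\chi_\epsilon'w$ vanish to infinite order, so $\norm{L(\chi_\epsilon w)-\chi_\epsilon Lw}\to0$ and both sides converge (the weights $\kappa^j$ stay bounded on the support for fixed $s$, and flatness controls the endpoint growth); alternatively, the boundary terms in $t$ vanish directly. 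A spatial smoothing argument is not needed, since the support is already compact in $U$.
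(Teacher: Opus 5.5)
\textbf{Gap: your splitting misclassifies $i\partial_t$.} On functions compactly supported in time, $i\partial_t$ is formally self-adjoint, not skew-adjoint. Both $\partial_t$ and multiplication by $i$ are skew, and they commute, so $\ip{i\partial_t u}{v}=\ip{u}{i\partial_t v}$. Hence your $A$ is not antisymmetric, and the identity $2\Real\ip{Sw}{Aw}=\ip{[S,A]w}{w}$ is false. Write $A=A_0+i\partial_t$ with $A_0=2\kappa\nabla\psi\cdot\nabla+\kappa\Delta\psi-i\kappa'\psi$. Then
\[
 2\Real\ip{Sw}{Aw}=\ip{[S,A_0]w}{w}
 +2\Imag\int\nabla w\cdot\overline{\nabla\partial_t w}
 -2\int\kappa^2\abs{\nabla\psi}^2\Imag\bigl(w\,\overline{\partial_t w}\bigr).
\]
So $i\partial_t$ enters through an anticommutator with $S$, not through the commutator $[S,i\partial_t]$ of your item (iii). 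These extra terms are top order and have no sign. Take $w=\chi(t)e^{i\lambda t}\varphi(x)$ with real $\chi,\varphi$. The first term on the right is independent of $\lambda$, while the other two equal
\[
 -2\lambda\int_0^T\chi^2\Bigl(\int_U\abs{\nabla\varphi}^2\dd x
 -\kappa^2\int_U\abs{\nabla\psi}^2\varphi^2\dd x\Bigr)\dd t .
\]
This is linear in $\lambda\in\R$, hence unbounded below for generic $\varphi$. So no lower bound $c\int(\kappa\abs{\nabla w}^2+\kappa^3\abs{w}^2)-R(w)$ with absorbable $R$ can hold for your cross term.

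\textbf{Fix: use the paper's grouping.} Take $\mathcal A=i\partial_t-\Delta-\kappa^2\abs{\nabla\psi}^2$ as the symmetric part and $\mathcal B=2\kappa\nabla\psi\cdot\nabla+\kappa\Delta\psi-i\kappa'\psi$ as the antisymmetric part. The time derivative then appears only through
\[
 [i\partial_t,\mathcal B]=2i\kappa'\nabla\psi\cdot\nabla+i\kappa'\Delta\psi+\kappa''\psi .
\]
Together with $[-\Delta,-i\kappa'\psi]=2i\kappa'\nabla\psi\cdot\nabla+i\kappa'\Delta\psi$, this contributes the real form $\int\kappa''\psi\abs{w}^2-4\int\kappa'\Imag(\nabla\psi\cdot\nabla w\,\overline{w})$. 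The first term is genuinely negative, since $\kappa''>0$ and $\psi<-1$. It is absorbed using $\kappa''\le 2(T/s)^2\kappa^3$. The second is handled by Young's inequality, as in your item (ii).

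\textbf{What is already correct.} After this correction, the rest matches the paper's proof: your spatial commutators giving $8\kappa\abs{\nabla w}^2$ and $8\kappa^3\abs{\nabla\psi}^2\abs{w}^2$, the choice of $s_0$, and the temporal-cutoff extension to flat functions. In that last step, the powers of $\kappa$ are not bounded near $t=0,T$. It is flatness that makes the weighted integrals finite and the cutoff error vanish.
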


\begin{proof}
Put $g=\nabla\psi$ and decompose $L=A+B$, where
\[
 A=i\partial_t-\Delta-\kappa^2\abs g^2,\qquad
 B=2\kappa g\cdot\nabla+\kappa\Delta\psi-i\kappa'\psi.
\]
On the stated test class, integration by parts gives
$A^*=A$ and $B^*=-B$. Thus
\[
 \norm{Lw}_{L^2}^2
 =\norm{Aw}_{L^2}^2+\norm{Bw}_{L^2}^2
   +2\Real\int_{(0,T)\times U}Aw\,\overline{Bw}\,\dd x\,\dd t.
\]
The $L^2$ norms in this proof are taken over $(0,T)\times U$.
A direct calculation of the commutator gives
\[
 [A,B]
 =-8\kappa\Delta+8\kappa^3\abs g^2+\kappa''\psi
   +4i\kappa'g\cdot\nabla+2i\kappa'\Delta\psi .
\]
The spatial terms follow from
\[
 [-\Delta,\,2\kappa g\cdot\nabla+\kappa\Delta\psi]
 =-8\kappa\Delta,
 \qquad
 [-\kappa^2\abs g^2,\,2\kappa g\cdot\nabla]
 =8\kappa^3\abs g^2,
\]
because $D^2\psi=2I$ and $\Delta\psi=2n$. The remaining terms follow
from
\[
 [i\partial_t,\,2\kappa g\cdot\nabla+\kappa\Delta\psi]
 =2i\kappa'g\cdot\nabla+i\kappa'\Delta\psi,
\]
\[
 [-\Delta,\,-i\kappa'\psi]
 =2i\kappa'g\cdot\nabla+i\kappa'\Delta\psi,
 \qquad
 [i\partial_t,\,-i\kappa'\psi]=\kappa''\psi .
\]
Taking the real quadratic form of the commutator gives
\begin{equation}\label{eq:strong-cross-collar}
\begin{aligned}
 2\Real\int_{(0,T)\times U}Aw\,\overline{Bw}\,\dd x\,\dd t
 ={}&8\int_{(0,T)\times U}\kappa\abs{\nabla w}^2\,\dd x\,\dd t\\
    &+8\int_{(0,T)\times U}\kappa^3\abs g^2\abs w^2\,\dd x\,\dd t
     +\int_{(0,T)\times U}\kappa''\psi\abs w^2\,\dd x\,\dd t\\
    &-4\int_{(0,T)\times U}
       \kappa'\Imag(g\cdot\nabla w\,\overline w)\,\dd x\,\dd t.
\end{aligned}
\end{equation}
The support conditions eliminate the boundary terms, and
$2i\kappa'\Delta\psi$ contributes nothing to the real quadratic form.

Let
\[
 d=\inf_U\abs g>0,\qquad
 G=\sup_U\abs g,\qquad H=\sup_U\abs\psi.
\]
Young's inequality gives
\[
 4\abs{\kappa'}\,G\abs{\nabla w}\abs w
 \leq2\kappa\abs{\nabla w}^2
       +2G^2\frac{\abs{\kappa'}^2}{\kappa}\abs w^2.
\]
Lemma~\ref{lem:timeweights} implies
\[
 \frac{\abs{\kappa'}^2}{\kappa}
 \leq\frac{T^2}{s^2}\kappa^3,\qquad
 \abs{\kappa''}\leq\frac{2T^2}{s^2}\kappa^3.
\]
Combining these bounds, we find that the right-hand side of \eqref{eq:strong-cross-collar} is bounded below by
\[
 6\int_{(0,T)\times U}\kappa\abs{\nabla w}^2\,\dd x\,\dd t+
 \left(8d^2-\frac{2T^2(G^2+H)}{s^2}\right)
       \int_{(0,T)\times U}\kappa^3\abs w^2\,\dd x\,\dd t.
\]
We choose $s$ so that the coefficient in parentheses is at least
$4d^2$. The decomposition of $\norm{Lw}_{L^2}^2$ then gives
\eqref{eq:strong-collar}.

To extend the estimate to flat functions, we multiply $w$ by temporal
cutoffs equal to one on $[2\varepsilon,T-2\varepsilon]$ and zero
near the endpoints, with first derivatives of order
$O(\varepsilon^{-1})$. Flatness makes the commutator with
$i\partial_t$ tend to zero in $L^2$ and controls all polynomial
factors in $\kappa$, $\kappa'$, and $\kappa''$ above. The estimate
therefore passes to the limit.
\end{proof}

To obtain a zeroth-order observation term, we regularize the test
function before applying the strong estimate. The resulting source
norm controls the spatial cutoff errors without differentiating the
observed state. For this purpose, we use the spatial operator $S_\kappa$ from
Lemma~\ref{lem:bessel}.

\begin{lemma}\label{lem:lower}
Fix $T>0$, a compact set $K\subset U$, and
$\chi_2\in C_c^\infty(U;[0,1])$ equal to one on a neighborhood
of $K$. The spatial coefficients of $L$ admit fixed smooth bounded
extensions to $\R^n$, independent of $s$ and $t$.
Let $\widetilde L_s$ denote the resulting differential operator on
$(0,T)\times\R^n$, chosen to agree with $L$ near
$(0,T)\times\supp\chi_2$. There exist $s_0>0$ and $C>0$,
depending only on the fixed spatial data and $T$, such that,
for every $s\geq s_0$, the following estimate holds, with the
subscript $s$ on the operators suppressed:
\begin{equation}\label{eq:lower-collar}
 \int_0^T\kappa^3\norm{v(t)}_{L^2(\R^n)}^2\,\dd t
 \leq C\int_0^T
       \norm{S_\kappa\widetilde L v(t)}_{L^2(\R^n)}^2\,\dd t
\end{equation}
whenever $v\in C_c^\infty((0,T)\times\R^n;\C)$ has spatial support
in $K$. The estimate also holds when $v$ is smooth with that spatial
support and flat at $t=0,T$.
\end{lemma}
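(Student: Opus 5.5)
The plan is to apply Lemma~\ref{lem:strong} to the regularized function $w=\chi_2 S_\kappa v$, which is what the Bessel regularizer is for. Since $\supp v\subset K$ and $\chi_2$ equals one near $K$, Lemma~\ref{lem:tails} shows that $(1-\chi_2)S_\kappa v$ is $O(\kappa^{-N})\|v\|_2$ in $H^1$. Here $\kappa=\kappa(t)$ depends on time, so $w$ is smooth with fixed compact support $\supp\chi_2\subset U$. It is also flat at $t=0,T$ whenever $v$ is: $v$ vanishes to infinite order there, and $\partial_t$ falling on $S_{\kappa(t)}$ produces $\kappa'\partial_\kappa S_\kappa$, which is controlled by \eqref{eq:multbounds} and polynomial growth in $\kappa$. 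Lemma~\ref{lem:strong} then gives
\[
\int_0^T\bigl(\kappa\|\nabla w\|^2+\kappa^3\|w\|^2\bigr)\dd t\le C\int_0^T\|Lw\|^2\dd t .
\]
For the left side, \eqref{eq:energyidentity} and the tail bound give $\kappa\|\nabla S_\kappa v\|^2+\kappa^3\|S_\kappa v\|^2=\kappa^3\|v\|^2$. Replacing $S_\kappa v$ by $w$ costs $O(\kappa^{3-2N})\|v\|^2$, which is absorbed. So the left side is bounded below by $\int\kappa^3\|v\|^2-C\int\|v\|^2$.

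For the right side, $L=\widetilde L$ on $\supp\chi_2$, so I write
\[
Lw=\chi_2 S_\kappa\widetilde L v+\chi_2[\widetilde L,S_\kappa]v+[\widetilde L,\chi_2]S_\kappa v .
\]
The last term is supported where $\chi_2$ is nonconstant, at positive distance from $K$. By the second part of Lemma~\ref{lem:tails}, including the $\kappa^2|\nabla\psi|^2$ and $\kappa\nabla\psi\cdot\nabla$ factors, it is $O(\kappa^{-N})\|v\|$. The main term is bounded by $\|S_\kappa\widetilde Lv\|$.

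The central step is the commutator, where I need $\|[\widetilde L,S_\kappa]v\|\le C\kappa\|v\|$, so that its square $C\kappa^2\|v\|^2$ is absorbed by $\kappa^3\|v\|^2$ once $\kappa\ge 4s/T^2$ is large. I would take the pieces in turn.
\begin{itemize}
\item $[-\Delta,S_\kappa]=0$.
\item The time derivative gives $[i\partial_t,S_{\kappa(t)}]=i\kappa'\partial_\kappa S_\kappa$, of size $|\kappa'|\kappa^{-1}\le (T/s)\kappa$ by \eqref{eq:multbounds} and Lemma~\ref{lem:timeweights}.
\item The first-order term is $[2\kappa\tilde g\cdot\nabla,S_\kappa]=2\kappa\sum_j[\tilde g_j,S_\kappa]\partial_j$, which is $O(\kappa)$ by \eqref{eq:commbounds}.
\item The zeroth-order multipliers $\kappa\Delta\psi$, $\kappa^2|g|^2$ and $\kappa'\psi$ (extended) give $\kappa\cdot C\kappa^{-1}$, $\kappa^2\cdot C\kappa^{-1}$ and $(T/s)\kappa^2\cdot C\kappa^{-1}$ respectively. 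All are $O(\kappa)$ with constants depending on $T$ and $s_0$.
\end{itemize}
Hence $\|Lw\|^2\le 3\|S_\kappa\widetilde Lv\|^2+C\kappa^2\|v\|^2+C\kappa^{-2N}\|v\|^2$. Combining the two sides and choosing $s_0$ so that $C\kappa^2+C\le\frac12\kappa^3$ for all $t$ gives \eqref{eq:lower-collar}.

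The remaining task is the justification for flat $v$. This means checking that the commutator identities hold for $\kappa$ depending on $t$, and that flatness of $w$ passes through $S_{\kappa(t)}$. I expect the time-derivative commutator and the density arguments in Lemma~\ref{lem:bessel} to be the only delicate points.
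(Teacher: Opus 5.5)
Your proposal is correct and is essentially the paper's argument. You regularize to $w=\chi_2S_\kappa v$, use the energy identity and off-diagonal tails to bound $\kappa^3\|v\|^2$ by the energy of $w$, apply Lemma~\ref{lem:strong}, split $Lw$ into the main term, the commutator $[\widetilde L,S_\kappa]v$ of size $C\kappa\|v\|$ (via $|\kappa'|/\kappa\le (T/s)\kappa$) and the cutoff commutator of size $O(\kappa^{1-N})\|v\|$, and absorb $C\kappa^2\|v\|^2$ for large $s$. The only deviation is the flat case: the paper applies the compactly supported estimate to $\rho_\varepsilon v$ and lets $i\rho_\varepsilon'S_\kappa v$ vanish instead of checking flatness of $w$, but your direct route also works (the cross term in replacing $S_\kappa v$ by $w$ is $O(\kappa^{3-N})\|v\|^2$ rather than $O(\kappa^{3-2N})\|v\|^2$, which is harmless for $N\ge3$).
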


\begin{proof}
We choose fixed functions
$a_1,\ldots,a_n,b,c,d\in C_b^\infty(\R^n)$ which agree, respectively,
with
\[
 \partial_1\psi,\ldots,\partial_n\psi,\quad
 \Delta\psi,\quad \abs{\nabla\psi}^2,\quad \psi
\]
on a neighborhood of $\supp\chi_2$. These extensions are independent
of $s$ and $t$. Set
\[
 \widetilde L=i\partial_t-\Delta
      +2\kappa a\cdot\nabla+\kappa b-\kappa^2c-i\kappa'd.
\]
All spatial norms of the fixed coefficients are finite.
We may increase the lower bound on $s$ so that
$\kappa(t)\geq1$ throughout $(0,T)$.

For each fixed $t$, set $z=S_\kappa v$ and $w=\chi_2z$. We use the energy
\[
 E_\kappa(z)=\kappa\norm{\nabla z}_{L^2(\R^n)}^2
                   +\kappa^3\norm z_{L^2(\R^n)}^2.
\]
Lemma~\ref{lem:bessel} gives the exact identity
\begin{equation}\label{eq:energy-identity-collar}
 E_\kappa(S_\kappa v)=\kappa^3\norm v_{L^2(\R^n)}^2.
\end{equation}
Let $r=(1-\chi_2)z$ and $E=\supp\nabla\chi_2$. The distance from $K$
to both $\supp(1-\chi_2)$ and $E$ is positive. By
Lemma~\ref{lem:tails}, for every integer $N\geq1$,
\begin{equation}\label{eq:tails-collar}
\begin{aligned}
 &\norm r_{L^2}+\norm{\nabla r}_{L^2}
   +\norm{1_Ez}_{L^2}+\norm{1_E\nabla z}_{L^2}\\
 &\hspace{35mm}\leq C_N\kappa^{-N}\norm v_{L^2}.
\end{aligned}
\end{equation}
The fixed separation makes these constants independent of $t$ and
$s$. The product rule also controls the terms in which a derivative falls
on $1-\chi_2$. Consequently,
\[
 E_\kappa(r)\leq C_N\kappa^{3-2N}\norm v_{L^2}^2.
\]
Since $z=w+r$, \eqref{eq:energy-identity-collar} implies
\[
 \kappa^3\norm v_{L^2}^2
 \leq2E_\kappa(w)+C_N\kappa^{3-2N}\norm v_{L^2}^2.
\]
With $N=2$ fixed, increasing $\inf_t\kappa(t)$ absorbs the last term.
In particular,
\begin{equation}\label{eq:energy-comparison-collar}
 \frac14\kappa^3\norm v_{L^2}^2\leq E_\kappa(w).
\end{equation}

We next estimate the conjugated equation. Since $\widetilde L=L$ near
$\supp\chi_2$, we have
\begin{equation}\label{eq:lower-expansion-collar}
 Lw=\chi_2S_\kappa\widetilde Lv
       +\chi_2[\widetilde L,S_\kappa]v
       +[\widetilde L,\chi_2]S_\kappa v.
\end{equation}
The first commutator is
\[
\begin{aligned}
 [\widetilde L,S_\kappa]v={}&
   i(\partial_tS_\kappa)v
   +2\kappa\sum_{j=1}^n[a_j,S_\kappa]\partial_jv
   +\kappa[b,S_\kappa]v\\
   &-\kappa^2[c,S_\kappa]v-i\kappa'[d,S_\kappa]v.
\end{aligned}
\]
Applying Lemma~\ref{lem:bessel} to these terms gives
\[
 \norm{[\widetilde L,S_\kappa]v}_{L^2}
 \leq C\left(\kappa+1+\frac{\abs{\kappa'}}{\kappa}\right)
                  \norm v_{L^2}.
\]
Here the time derivative contributes
$\norm{\partial_tS_\kappa}_{L^2\to L^2}
\leq C\abs{\kappa'}/\kappa$.
By Lemma~\ref{lem:timeweights},
$\abs{\kappa'}/\kappa\leq(T/s)\kappa$. Hence, once $s\geq T$,
\begin{equation}\label{eq:commutator-collar}
 \norm{[\widetilde L,S_\kappa]v}_{L^2}
 \leq C\kappa\norm v_{L^2}.
\end{equation}
Thus the term arising from the singular time coefficient has the
same order as the spatial commutator terms.

The second commutator has the explicit form
\[
 [\widetilde L,\chi_2]z
 =-2\nabla\chi_2\cdot\nabla z-(\Delta\chi_2)z
       +2\kappa(a\cdot\nabla\chi_2)z.
\]
Its coefficients are supported away from $K$.
The same off-diagonal estimate as in \eqref{eq:tails-collar}
(with $\supp\Delta\chi_2$ included, if necessary) gives
\begin{equation}\label{eq:cutoff-tail-collar}
 \norm{[\widetilde L,\chi_2]S_\kappa v}_{L^2}
 \leq C_N\kappa^{1-N}\norm v_{L^2}.
\end{equation}
By applying Lemma~\ref{lem:strong} to $w$ and combining the energy
comparison \eqref{eq:energy-comparison-collar} with
\eqref{eq:lower-expansion-collar}--\eqref{eq:cutoff-tail-collar}, we obtain
\[
 \int_0^T\kappa^3\norm v_{L^2}^2\,\dd t
 \leq C_1\int_0^T\norm{S_\kappa\widetilde Lv}_{L^2}^2\,\dd t
       +C_2\int_0^T\kappa^2\norm v_{L^2}^2\,\dd t.
\]
The constants $C_1,C_2$ are independent of $s$. Since
$\inf_t\kappa(t)=4s/T^2$, we can absorb the last term by choosing $s$
so that $\inf_t\kappa(t)\geq2C_2$. We obtain
\eqref{eq:lower-collar}.

Finally, let $v$ be flat at both endpoints. We apply the result to
$v_\varepsilon=\rho_\varepsilon v$, with the temporal cutoffs used
in Lemma~\ref{lem:strong}. Since $S_\kappa$ acts only in space,
\[
 S_\kappa\widetilde L v_\varepsilon
 =\rho_\varepsilon S_\kappa\widetilde Lv
       +i\rho_\varepsilon'S_\kappa v.
\]
The last term tends to zero in $L^2$ by
$\norm{S_\kappa}_{L^2\to L^2}\leq1$ and flatness of $v$.
All remaining terms converge in the displayed norms, which proves
the extension to flat functions. No bound on $\sup_t\kappa$ is needed.
\end{proof}

\begin{proof}[Proof of Proposition~\ref{thm:carleman}]
The collar assumption allows us to choose a cutoff $\chi\in C_c^\infty(\Omega)$
such that
\[
 0\leq\chi\leq1,\qquad \chi=1\ \hbox{on }\Omega\setminus\omega,
 \qquad
 \supp\nabla\chi\cup\supp\Delta\chi\subset\omega.
\]
Choose $\chi_2$ as in Lemma~\ref{lem:lower}, with
$K=\supp\chi$. Extend
\[
 v=\theta\chi p
\]
by zero from $\Omega$ to $\R^n$. The compact support of $\chi$
makes this extension smooth in space. Since $\psi<-1$ and $p$ is
smooth up to $t=0,T$, the extension is also flat at both temporal
endpoints: every derivative is bounded by a polynomial in $\kappa$
times a decaying exponential. Thus Lemma~\ref{lem:lower} applies.

On the support of $v$, the extended operator agrees with the
conjugated operator. Hence
\[
 \widetilde Lv
 =\theta P(\chi p)
 =\theta\chi Pp+\theta[P,\chi]p.
\]
The first term satisfies, at every $t$,
\[
 \norm{S_\kappa(\theta\chi Pp)}_{L^2(\R^n)}
 \leq\norm{\theta Pp}_{L^2(\Omega)}.
\]
To estimate the cutoff term, we write its derivative in divergence
form and apply the regularizer. With $q=\theta p$ in $\Omega$,
\begin{equation}\label{eq:source-divergence-collar}
 \theta[P,\chi]p
 =-2\operatorname{div}(q\nabla\chi)
   +\bigl(\Delta\chi+2\kappa\nabla\psi\cdot\nabla\chi\bigr)q.
\end{equation}
The products on the right have compact support in $\omega$ and
are extended by zero to $\R^n$. By Lemma~\ref{lem:bessel},
\[
 \norm{S_\kappa\operatorname{div}(q\nabla\chi)}_{L^2(\R^n)}
 \leq C\kappa\norm q_{L^2(\omega)}.
\]
The multiplication term in \eqref{eq:source-divergence-collar}
satisfies the same bound, since
$\norm{S_\kappa}\leq1$ and $\kappa\geq1$. Thus
\[
 \norm{S_\kappa\widetilde Lv}_{L^2(\R^n)}^2
 \leq C\left(
      \norm{\theta Pp}_{L^2(\Omega)}^2
          +\kappa^2\norm{\theta p}_{L^2(\omega)}^2
       \right).
\]
Inserting this bound into Lemma~\ref{lem:lower}, we obtain
\[
 \int_Q\kappa^3\theta^2\chi^2\abs p^2
 \leq C\left(
        \int_Q\theta^2\abs{Pp}^2+
        \int_{Q_\omega}\kappa^2\theta^2\abs p^2
       \right).
\]
Finally, $\chi=1$ outside $\omega$ implies pointwise that
$\abs p^2\leq\chi^2\abs p^2+1_\omega\abs p^2$.
Adding the resulting observation term and using $\kappa^2\leq\kappa^3$, we obtain
\eqref{eq:carleman}. The argument requires no integration by parts at
$\partial\Omega$.
\end{proof}

\begin{remark}\label{rem:gradient-obstruction}
The zeroth-order left-hand side in \eqref{eq:carleman} cannot in general
be replaced by a left-hand side that also controls
$\int_Q\kappa\theta^2\abs{\nabla p}^2$, while keeping the same
zeroth-order source and observation terms. To see this, fix $s$
and let $e_j$ be normalized Dirichlet eigenfunctions,
$-\Delta e_j=\lambda_je_j$, with $\lambda_j\to\infty$.
Then $p_j(t,x)=e^{i\lambda_jt}e_j(x)$ satisfies $Pp_j=0$.
On $J=[T/4,3T/4]$ the minimum of $\kappa\theta^2$ over
$J\times\overline\Omega$ is a positive number $a_s$.
Therefore
\[
 \int_Q\kappa\theta^2\abs{\nabla p_j}^2
 \geq a_s\abs J\,\lambda_j.
\]
In contrast, if
$b_s=\sup_{Q}\kappa^3\theta^2<\infty$, then
\[
 \int_{Q_\omega}\kappa^3\theta^2\abs{p_j}^2\leq b_sT.
\]
This contradicts such a strengthened estimate as $j\to\infty$.
Lemma~\ref{lem:strong} remains valid, since its test functions have
compact spatial support. The order reduction is therefore essential
to obtaining an observation term without spatial derivatives.
\end{remark}

\subsection{A local boundary phase}\label{sec:geometry}

We next establish the estimate under the Euclidean local phase
condition of Assumption~\ref{ass:phase-geometry}. We use a Dirichlet
regularizer so that the spatial localization can reach the unobserved
boundary. The interval case of Theorem~\ref{thm:main} is treated
separately in Section~\ref{subsec:interval-geometry}.

Assume \eqref{eq:phase-geometry}, and retain the notation
$K_\omega$ and $\mathcal O$ from the introduction.
We subtract a constant from $\psi$ so that $\psi<-1$ on
$\overline\Om$. All supports and neighborhoods in this subsection
are relative to $\overline\Om$.

\begin{proposition}\label{thm:tangent-carleman}
Assume \eqref{eq:phase-geometry}, and fix a constant shift of
$\psi$ such that $\psi<-1$ on $\overline\Om$.
For every $T>0$, set
$\kappa=s/[t(T-t)]$ and $\theta=e^{\kappa\psi}$.
There exist $s_0>0$ and $C>0$, depending only on $T$
and the fixed spatial data, such that
\[
 \int_Q\kappa^3\theta^2|p|^2
 \leq C\left(\int_Q\theta^2|Pp|^2
             +\int_{Q_\omega}\kappa^3\theta^2|p|^2\right)
\]
for every $s\geq s_0$ and every
$p\in C^\infty([0,T]\times\overline\Om;\C)$ satisfying
$p=0$ on $\Sigma$. No condition is imposed on $p(0)$ or $p(T)$.
Here the fixed spatial data comprise the domain, control region,
phase, phase neighborhood, and the constants in
\eqref{eq:phase-geometry}. In particular, $s_0$ and $C$ are
independent of $s$ and $p$.
\end{proposition}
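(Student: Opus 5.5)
We follow the architecture of the proof of Proposition~\ref{thm:carleman}. The regularizer $S_\kappa$ is replaced by a Dirichlet Bessel regularizer $S^D_\kappa=\kappa(\kappa^2+A)^{-1/2}$, with $A=-\Delta_D$. The convex phase of \eqref{eq:phase-geometry} replaces the quadratic phase.

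\textbf{Step 1: cutoff and strong estimate.} Choose $\chi\in C^\infty(\overline\Om;[0,1])$ with $\chi=1$ on $K_\omega$, $\supp\chi\subset\mathcal O$, and $\supp\nabla\chi\cup\supp\Delta\chi\subset\omega$. This is possible because $\mathcal O\setminus K_\omega$ lies in $\omega$ near $K_\omega$. We allow $\supp\chi$ to touch $\partial\Om$ only on $\mathcal O\cap\partial\Om$. Set $v=\theta\chi p$; it vanishes on $\Sigma$ and is flat at $t=0,T$ by Lemma~\ref{lem:timeweights}.

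We first prove the analogue of Lemma~\ref{lem:strong} for smooth $w$ with $w=0$ on $\Sigma$ and support in $\overline{\mathcal O'}$, where $\mathcal O'\Subset\mathcal O$. We repeat the decomposition $L=A+B$ with $g=\nabla\psi$. The commutator now reads
\[
-4\kappa\operatorname{div}(D^2\psi\,\nabla\cdot)+4\kappa^3 g\cdot D^2\psi\,g+\text{l.o.t.}
\]
Its positivity follows from $D^2\psi\ge c_1I$ and $|g|\ge c_0$ on $\mathcal O$. The term involving $\nabla\Delta\psi\cdot\nabla$ is of order $\kappa$ without a derivative gain; it is absorbed by Young's inequality into $\kappa|\nabla w|^2$ plus $\kappa|w|^2$, and the latter is absorbed into $\kappa^3|w|^2$ for large $s$. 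The time terms are handled as before via $s\ge aT+bT^2$.

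The new issue is the boundary terms. Since $w=0$ on the boundary, $\nabla w=\partial_\nu w\,\nu$ there. The flux from $2\Real\int Aw\,\overline{Bw}$ is then a multiple of $\int_\Sigma\kappa\,\partial_\nu\psi\,|\partial_\nu w|^2$, which vanishes because $\partial_\nu\psi=0$ on $\mathcal O\cap\partial\Om$. The symmetry relations $A^*=A$ and $B^*=-B$ hold because $g\cdot\nu=0$ and $w|_\Sigma=0$.

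\textbf{Step 2: order reduction.} For fixed $t$, let $z=S^D_\kappa v$ and $w=\chi_2 z$, with $\chi_2=1$ near $\supp\chi$ and $\supp\chi_2\subset\mathcal O'$. Spectral calculus gives the exact identity
\[
\kappa\|\nabla z\|^2+\kappa^3\|z\|^2=\kappa^3\|v\|^2,
\]
since $\|\nabla z\|^2=\langle Az,z\rangle$. We then expand $Lw$ as in \eqref{eq:lower-expansion-collar}.

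Three bounds are needed:
\begin{enumerate}
\item The off-diagonal decay $\|(1-\chi_2)S^D_\kappa v\|_{H^1}\le C_N\kappa^{-N}\|v\|$ for $v$ supported in $\supp\chi$. We obtain it from finite propagation speed for $\cos(t\sqrt A)$ together with the subordination formula $(\kappa^2+A)^{-1/2}=\int_0^\infty J_0(\kappa t)$-type $\cos(t\sqrt A)\,dt$, or alternatively from Davies--Gaffney heat-kernel bounds for the Dirichlet heat semigroup inserted into \eqref{eq:kernel}.
\item The commutator bounds $\|[b,S^D_\kappa]\|\le C\kappa^{-1}$ and $\|[b,S^D_\kappa]\partial_j\|\le C$ for the coefficients $b$ of $L$.
\item The divergence bound $\|S^D_\kappa\operatorname{div}F\|\le C\kappa\|F\|$ for $F$ compactly supported in $\omega$, which follows from $\|S^D_\kappa A^{1/2}\|\le\kappa$ and duality.
\end{enumerate}

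\textbf{Main obstacle.} I expect the commutator bound $\|[b,S^D_\kappa]\partial_j\|\le C$ to be the hardest step, together with checking that $w\in D(A)$ so that Step 1 applies. For the latter, $z\in D(A^{1/2})=H^1_0$ is all we get directly, and Step 1 requires $H^2$. I would first get $H^2$ by density, since $S^D_\kappa$ maps $D(A)$ to $D(A^{3/2})$ and $v(t)\in D(A)$ because $v$ vanishes on $\Sigma$ and is smooth.

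For the commutator, I would use the resolvent representation
\[
(\kappa^2+A)^{-1/2}=\pi^{-1}\int_0^\infty\mu^{-1/2}(\kappa^2+\mu+A)^{-1}\,d\mu.
\]
The identity $[b,R]=R[A,b]R$, with $[A,b]=-2\nabla b\cdot\nabla-\Delta b$, preserves the Dirichlet domain because $b$ is a multiplier. The estimates $\|\nabla R_\mu\|\le(\kappa^2+\mu)^{-1/2}$ and $\|R_\mu\|\le(\kappa^2+\mu)^{-1}$ then give both commutator bounds after integration in $\mu$. For the version with $\partial_j$ on the right, we rewrite $\partial_j R_\mu$ through its adjoint, which is bounded by $(\kappa^2+\mu)^{-1/2}$.

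\textbf{Step 3: conclusion.} With these bounds, the remaining steps are verbatim those of Lemma~\ref{lem:lower} and the proof of Proposition~\ref{thm:carleman}. The source term $\theta[P,\chi]p$ is written in divergence form supported in $\omega$, and the pointwise bound $|p|^2\le\chi^2|p|^2+\one_\omega|p|^2$ yields the asserted estimate.
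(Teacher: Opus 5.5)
There is a genuine gap in Step~3, where you say the remaining steps are verbatim those of Lemma~\ref{lem:lower}. That lemma writes the transport commutator as $2\kappa\sum_j[a_j,S_\kappa]\partial_jv$. This uses $[\partial_j,S_\kappa]=0$, which holds for the full-space Fourier multiplier but fails for $S^D_\kappa=\kappa(\kappa^2+A)^{-1/2}$. For the Dirichlet regularizer,
\[
 [g\cdot\nabla,S^D_\kappa]=\sum_j[g_j,S^D_\kappa]\partial_j+\sum_j g_j[\partial_j,S^D_\kappa],
\]
and your three bounds say nothing about the second sum. This term is not harmless. In a half-space model, odd and even reflection give, for $u\in H^1_0$,
\[
 [\partial_n,S^D_\kappa]u=(S^N_\kappa-S^D_\kappa)\partial_nu,
\]
where $S^N_\kappa$ is the Neumann regularizer. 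The difference $S^N_\kappa-S^D_\kappa$ is a boundary-layer operator of width $\kappa^{-1}$ with norm of order one. So for $u$ varying on the scale $\kappa^{-1}$ at the boundary, the commutator has size comparable to $\|\partial_nu\|\sim\kappa\|u\|$. If the transport field has a nonzero normal component there, $2\kappa g\cdot\nabla$ produces a commutator of order $\kappa^2\|v\|$. The resulting remainder $\kappa^4\|v\|^2$ cannot be absorbed into $\kappa^3\|v\|^2$. Your choice of supports does place $\supp\chi_2\cap\partial\Om$ where $g\cdot\nu=0$, but you never use this in the commutator estimate. Tangency must enter here a second time; your proposal uses $\partial_\nu\psi=0$ only to kill the boundary flux in Step~1.

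The repair, which is what the paper does, runs as follows.
\begin{enumerate}
\item Take $\rho\in C^\infty(\overline\Om)$ supported in $\mathcal O$ with $\rho=1$ near $\supp\chi_2$. Replace $\nabla\psi$ in the transport term by $a=\rho\nabla\psi$. Then $a\cdot\nu=0$ on all of $\partial\Om$, and the modified operator $\widetilde L$ agrees with $L$ near $\supp\chi_2$, so your expansion of $Lw$ is unchanged.
\item Estimate the commutator with the whole field $X=a\cdot\nabla$, not componentwise. Tangency gives $XR_\mu v\in H^1_0$. Hence the weak identity $[X,R_\mu]=R_\mu[A,X]R_\mu$ holds, with $[A,X]:H^1_0\to H^{-1}$ bounded, so $\|[X,R_\mu]\|\le C\mu^{-1}$.
\item Integrating the resolvent formula gives $\|[X,S^D_\kappa]\|\le C$, and hence $\|[\widetilde L,S^D_\kappa]v\|\le C\kappa\|v\|$. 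The rest of your argument then goes through.
\end{enumerate}

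Two minor points. First, $\supp\nabla\chi$ generally meets $\partial\Om$, so you can only require $\supp\nabla\chi\cap\Om\subset\omega$. This is harmless, since the divergence bound holds for every $F\in L^2$ with $\operatorname{div}F$ taken in $H^{-1}$. Second, the subordination kernel for $(\kappa^2+A)^{-1/2}$ is $\frac2\pi K_0(\kappa t)$, not $J_0(\kappa t)$.
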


Tangency enters the proof in two places. It cancels the boundary
term in the strong estimate and ensures that the transport field
preserves the Dirichlet boundary condition in the resolvent commutator below.
A favorable boundary sign alone does not ensure the latter
property. Thus the normal condition in
Assumption~\ref{ass:phase-geometry} is a restriction of this
order-reduction argument, not a necessary condition for exact
controllability.

Put $A=-\Delta_D$ on $H=L^2(\Omega)$ and
\[
 R_\mu=(\mu+A)^{-1},\qquad
 S_\kappa=\kappa(\kappa^2+A)^{-1/2}.
\]
In this subsection, $S_\kappa$ denotes this Dirichlet
operator. Its functional-calculus representation is
\begin{equation}\label{eq:dirichlet-bessel-resolvent}
 S_\kappa=\frac\kappa\pi\int_0^\infty
       r^{-1/2}R_{\kappa^2+r}\,\mathrm dr.
\end{equation}
The integral converges in operator norm on $H$.

\begin{lemma}\label{lem:dirichlet-commutators}
Let $b\in C^\infty(\overline\Omega)$, and let
$X=a\cdot\nabla$, where $a\in C^\infty(\overline\Omega;\mathbb R^n)$
satisfies $a\cdot\nu=0$ on $\partial\Omega$.
For every $\kappa\geq1$ and $F\in L^2(\Om;\C^n)$,
with $\operatorname{div}F$ defined in the conjugate-linear dual
of $H^1_0(\Om;\C)$, one has
\begin{align*}
 &\|S_\kappa\|_{H\to H}\leq1,
 \qquad
 \|S_\kappa\operatorname{div}F\|_H\leq\kappa\|F\|_{L^2(\Omega)^n},\\
 &\|[b,S_\kappa]\|_{H\to H}\leq C\kappa^{-1},
 \qquad
 \|[X,S_\kappa]\|_{H\to H}\leq C,\\
 &\|\partial_tS_{\kappa(t)}\|_{H\to H}
       \leq C|\kappa'|/\kappa.
\end{align*}
The commutator with $X$, initially defined on smooth Dirichlet
functions, extends to the displayed bounded operator.
For every $v\in H$, one also has
\begin{equation}\label{eq:dirichlet-bessel-energy}
 \kappa\|\nabla S_\kappa v\|_2^2
       +\kappa^3\|S_\kappa v\|_H^2
 =\kappa^3\|v\|_H^2.
\end{equation}
The constants depend only on the fixed domain and coefficients
$b,a$, and are independent of $\kappa$, $v$, and $F$.
\end{lemma}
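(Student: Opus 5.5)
The approach is spectral calculus for the self-adjoint operator $A=-\Delta_D\geq0$, combined with the resolvent representation \eqref{eq:dirichlet-bessel-resolvent}.

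\emph{The first line and the energy identity.} The operator $S_\kappa$ is the function $m_\kappa(\lambda)=\kappa(\kappa^2+\lambda)^{-1/2}\in(0,1]$ of $A$, so $\|S_\kappa\|\leq1$. For $v\in H$ we have $S_\kappa v\in D(A^{1/2})=H^1_0$ and $\|\nabla S_\kappa v\|_2^2=\|A^{1/2}S_\kappa v\|^2$. The identity \eqref{eq:dirichlet-bessel-energy} then reduces to the scalar identity
\[
\kappa\lambda m_\kappa^2+\kappa^3m_\kappa^2=\kappa^3 .
\]
For the divergence bound, $\operatorname{div}F$ lies in $H^{-1}$ with $\|\operatorname{div}F\|_{H^{-1}}\leq\|F\|$, where $H^{-1}$ carries the norm $\|A^{-1/2}\cdot\|$ realized via the gradient. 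We write $S_\kappa\operatorname{div}F=(S_\kappa A^{1/2})(A^{-1/2}\operatorname{div}F)$. Since $\sup_\lambda \kappa\lambda^{1/2}(\kappa^2+\lambda)^{-1/2}\leq\kappa$, this gives the bound $\kappa\|F\|$. The same formula extends $S_\kappa$ to $H^{-1}$ by duality, using the self-adjointness of $S_\kappa$.

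\emph{The time derivative.} We have $\partial_t S_{\kappa(t)}=\kappa'(\partial_\kappa m_\kappa)(A)$ and
\[
\partial_\kappa m_\kappa=\lambda(\kappa^2+\lambda)^{-3/2}\leq\kappa^{-1}.
\]

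\emph{Commutators.} We insert \eqref{eq:dirichlet-bessel-resolvent} and commute under the integral, using $[T,R_\mu]=R_\mu[\mu+A,T]R_\mu$ applied on smooth Dirichlet functions.

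For multiplication by $b$, the identity $[A,b]w=-(\Delta b)w-2\nabla b\cdot\nabla w$ gives
\[
[b,R_\mu]=R_\mu\bigl(2\nabla b\cdot\nabla+\Delta b\bigr)R_\mu .
\]
This is legitimate because $b$ preserves $H^1_0$, and $bR_\mu$ maps into $D(A)$ up to the commutator. We write $2\nabla b\cdot\nabla w=2\operatorname{div}(w\nabla b)-2(\Delta b)w$. The standard bounds are
\[
\|R_\mu\|\leq\mu^{-1},\qquad \|R_\mu\operatorname{div}\|_{L^2\to L^2}=\|\nabla R_\mu\|\leq\mu^{-1/2},\qquad \|\nabla R_\mu\|_{L^2\to L^2}\leq \mu^{-1/2}.
\]
They yield $\|[b,R_\mu]\|\leq C\mu^{-3/2}$. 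Integrating, with $\mu=\kappa^2+r$,
\[
\frac\kappa\pi\int_0^\infty r^{-1/2}(\kappa^2+r)^{-3/2}\,dr=C\kappa\cdot\kappa^{-2}=C\kappa^{-1}.
\]

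For $X=a\cdot\nabla$, the tangency $a\cdot\nu=0$ ensures that $X$ maps $D(A)$ into $H^1_0$. Indeed, $Xw$ involves only tangential derivatives of $w$ at $\partial\Omega$, which vanish. Hence $[A,X]$ is a second-order operator with smooth coefficients, mapping $D(A)\to H$ and $H^1_0\to H^{-1}$. This is the step I expect to be the main obstacle: the domain issue must be handled honestly. One must check that $XR_\mu w\in H^1_0$, so that $R_\mu(\mu+A)XR_\mu$ makes sense, and write
\[
[A,X]=\sum_{j,k}\partial_j c_{jk}\partial_k+\text{first order},
\]
with the $\partial_j$ placed in divergence form. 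Then
\[
\|R_\mu\partial_j c\,\partial_kR_\mu\|\leq\|R_\mu\operatorname{div}\|\,\|c\|_\infty\,\|\nabla R_\mu\|\leq C\mu^{-1},
\]
while the first-order terms give $C\mu^{-3/2}$. The integral
\[
\kappa\int_0^\infty r^{-1/2}(\kappa^2+r)^{-1}\,dr=\pi
\]
is bounded, which gives $\|[X,S_\kappa]\|\leq C$ on smooth Dirichlet functions, and density extends it. Along the way one needs $\|\nabla R_\mu\|_{H\to L^2}\leq\mu^{-1/2}$, which follows from the identity $\|\nabla u\|^2+\mu\|u\|^2=\operatorname{Re}\langle(\mu+A)u,u\rangle$. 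The adjoint form $R_\mu\operatorname{div}=-(\nabla R_\mu)^*$ on $H^{-1}$ completes the argument. All constants depend only on $\|a\|_{C^2}$, $\|b\|_{C^2}$ and $\Omega$.
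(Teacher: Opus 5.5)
Your proposal is correct and follows essentially the same route as the paper. You obtain the contraction, divergence, time-derivative and energy statements from spectral calculus. For the commutators you combine the resolvent representation \eqref{eq:dirichlet-bessel-resolvent} with the weak identity $[T,R_\mu]=R_\mu[A,T]R_\mu$: tangency gives $XR_\mu w\in H^1_0$, and the divergence form of $[A,X]$ yields the $\mu^{-1}$ and $\mu^{-3/2}$ bounds.

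There are two small points to fix. First, your displayed formula for $[b,R_\mu]$ has the wrong overall sign; it should be $-R_\mu(2\nabla b\cdot\nabla+\Delta b)R_\mu$, which is harmless for the norm bound. Second, moving $X$ under the resolvent integral needs the justification the paper gives: for $w\in D(A)$ the truncated integrals converge in the graph norm of $D(A)$, and $X$ is continuous there.
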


\begin{proof}
The contraction and time derivative estimates, together with
\eqref{eq:dirichlet-bessel-energy}, follow from the spectral theorem.
The divergence estimate follows by duality from
\[
 \|\nabla S_\kappa u\|_2^2
 =\sum_j\frac{\kappa^2\lambda_j}{\kappa^2+\lambda_j}
          |(u,e_j)_H|^2
 \leq\kappa^2\|u\|_H^2.
\]
Here the divergence is interpreted in $H^{-1}(\Omega)$, the dual
of $H^1_0(\Omega)$. Thus no boundary flux of $F$ is introduced.

For $\mu\geq1$ the spectral theorem and duality give
\[
 \|R_\mu\|_{H\to H}\leq\mu^{-1},\qquad
 \|R_\mu\|_{H\to H^1_0}
       +\|R_\mu\|_{H^{-1}\to H}\leq C\mu^{-1/2}.
\]
Tangency ensures that $X$ preserves the zero boundary trace.
In particular, for $v\in H$, we have $R_\mu v\in D(A)$ and
$XR_\mu v\in H^1_0$, because tangential derivatives of a Dirichlet
function have zero trace. Interpreting $X:H\to H^{-1}$
distributionally, we obtain the weak resolvent identity
\[
 [X,R_\mu]=R_\mu[A,X]R_\mu,
\]
where
\[
 [A,X]u=-2\sum_{i,j}\partial_i
             \bigl((\partial_i a_j)\partial_j u\bigr)
                  +\sum_j(\Delta a_j)\partial_j u.
\]
In this divergence form, the commutator defines a bounded map
$[A,X]:H^1_0\to H^{-1}$, whose norm depends only on fixed
derivatives of $a$. To verify the resolvent identity, note that
$(\mu+A)XR_\mu v=Xv+[A,X]R_\mu v$ in $H^{-1}$;
applying its Dirichlet resolvent yields the identity. This
argument does not require $X$ to preserve $D(A)$.
Consequently,
\[
 \|[X,R_\mu]\|_{H\to H}\leq C\mu^{-1}.
\]
Similarly,
\[
 [b,R_\mu]=R_\mu[A,b]R_\mu,\qquad
 [A,b]=-2\nabla b\cdot\nabla-\Delta b,
\]
so that
\[
 \|[b,R_\mu]\|_{H\to H}\leq C\mu^{-3/2}.
\]
Integration in \eqref{eq:dirichlet-bessel-resolvent}, using
\[
 \int_0^\infty\frac{r^{-1/2}}{\kappa^2+r}\,\mathrm dr
      =\frac\pi\kappa,
 \qquad
 \int_0^\infty\frac{r^{-1/2}}{(\kappa^2+r)^{3/2}}\,\mathrm dr
      =\frac2{\kappa^2},
\]
gives the two commutator bounds. We first justify interchanging the integral and the unbounded
operator $X$ for $v\in D(A)$.
The truncated resolvent integrals converge in the graph norm
of $D(A)$, because $A$ commutes with $R_\mu$. Thus $X$ can be
applied before passing to the limit. The integrals of the
commutators converge in operator norm on $H$, and the resulting
commutator extends to $H$ by density.
\end{proof}

\begin{lemma}\label{lem:dirichlet-tails}
Let $K,E\subset\overline\Om$ be closed sets with
$\operatorname{dist}(K,E)>0$. For every integer $N\geq1$,
there exists $C_N>0$ such that, for all
$v\in H$ supported in $K$ and all $\kappa\geq1$,
\[
 \|1_E S_\kappa v\|_H
       +\|1_E\nabla S_\kappa v\|_2
 \leq C_N\kappa^{-N}\|v\|_H.
\]
The same estimate holds for $\|\eta S_\kappa v\|_{H^1}$ when
$\eta$ is a fixed smooth cutoff supported away from $K$.
The constants depend only on the domain, the fixed sets, $N$,
and, in the cutoff version, $\eta$.
\end{lemma}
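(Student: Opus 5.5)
The plan is to reduce the statement to an off-diagonal estimate for the Dirichlet resolvents $R_\mu=(\mu+A)^{-1}$, $\mu\geq1$, and then integrate it through the representation \eqref{eq:dirichlet-bessel-resolvent}.

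\emph{Cutoffs.} Since $\operatorname{dist}(K,E)=:\delta>0$, I fix cutoffs $\eta_0,\eta_1,\dots,\eta_{N+1}\in C^\infty(\overline\Om;[0,1])$ with the following properties:
\begin{itemize}
\item $\eta_0=1$ on a neighborhood of $E$ (or $\eta_0=\eta$ in the cutoff version);
\item $\eta_{j+1}=1$ on a neighborhood of $\supp\eta_j$;
\item every $\eta_j$ vanishes on the $\delta/2$-neighborhood of $K$.
\end{itemize}
These are restrictions to $\overline\Om$ of smooth functions on $\R^n$, chosen once in terms of $\delta$ and $N$. They need not vanish on $\partial\Om$. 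Multiplication by $\eta_j$ preserves $H^1_0$, so the weak commutator identity from the proof of Lemma~\ref{lem:dirichlet-commutators} applies:
\[
[\eta_j,R_\mu]=R_\mu[A,\eta_j]R_\mu,\qquad [A,\eta_j]u=-2\operatorname{div}(u\nabla\eta_j)+(\Delta\eta_j)u .
\]
This holds in the $H^{-1}$ sense, with no boundary flux. Since the coefficients of $[A,\eta_j]$ are supported where $\eta_{j+1}=1$, we have $[A,\eta_j]=[A,\eta_j]\eta_{j+1}$. The divergence form shows that $[A,\eta_j]$ maps $H$ to $H^{-1}$ with norm $C_j$.

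\emph{Iteration.} Let $v$ be supported in $K$. Then $\eta_jv=0$, so
\[
\eta_jR_\mu v=[\eta_j,R_\mu]v=R_\mu[A,\eta_j]\eta_{j+1}R_\mu v .
\]
I will use the bounds $\|R_\mu\|_{H^{-1}\to H}\leq C\mu^{-1/2}$ and $\|R_\mu\|_{H^{-1}\to H^1_0}\leq C$, both from the spectral theorem. The first gives
\[
\|\eta_jR_\mu v\|_H\leq C\mu^{-1/2}\|\eta_{j+1}R_\mu v\|_H .
\]
Iterating from $j=1$ up to $N$ and ending with $\|R_\mu v\|_H\leq\mu^{-1}\|v\|_H$ yields
\[
\|\eta_1R_\mu v\|_H\leq C_N\mu^{-1-N/2}\|v\|_H .
\]
For the top level I apply the $H^{-1}\to H^1_0$ bound once:
\[
\|\eta_0R_\mu v\|_{H^1}\leq C\|\eta_1R_\mu v\|_H\leq C_N\mu^{-1-N/2}\|v\|_H .
\]
This controls both $\|1_ER_\mu v\|_H$ and $\|1_E\nabla R_\mu v\|_2$, and in the cutoff version $\|\eta R_\mu v\|_{H^1}$.

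\emph{Integration in $\mu$.} Set $\mu=\kappa^2+r$ in \eqref{eq:dirichlet-bessel-resolvent}. The integral converges in $\mathcal L(H,H^1)$ after multiplication by $\eta_0$, thanks to the bound just proved. Hence
\[
\|\eta_0S_\kappa v\|_{H^1}\leq C_N\,\kappa\int_0^\infty r^{-1/2}(\kappa^2+r)^{-1-N/2}\,\mathrm dr\,\|v\|_H=C_N'\kappa^{-N}\|v\|_H .
\]
Replacing $N$ by a larger integer is harmless, so this gives the lemma for all $\kappa\geq1$. The constants depend only on $\Om$, the cutoffs (hence on $K$, $E$, $N$, or on $\eta$), and the resolvent bounds; they do not depend on $\kappa$ or $v$.

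\emph{Main obstacle.} The only delicate point is justifying the commutator identity when the cutoffs do not vanish on $\partial\Om$. I would handle it exactly as in Lemma~\ref{lem:dirichlet-commutators}. For $u=R_\mu v\in D(A)$ one has $\eta_ju\in H^1_0$ and
\[
(\mu+A)(\eta_ju)=\eta_jv+[A,\eta_j]u\quad\text{in }H^{-1}.
\]
Applying the Dirichlet resolvent to this identity gives the formula, and no information on normal derivatives at $\partial\Om$ is needed.
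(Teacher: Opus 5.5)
Your proof is correct and follows essentially the same route as the paper: nested cutoffs between $E$ and $K$, a gain of $\mu^{-1/2}$ per cutoff for the Dirichlet resolvent starting from $\|R_\mu v\|_H\le\mu^{-1}\|v\|_H$, and integration through \eqref{eq:dirichlet-bessel-resolvent} with $r=\kappa^2z$. The only difference is in how each step's gain is derived: the paper uses the Caccioppoli identity $\mu\|\eta u\|_H^2+\|\nabla(\eta u)\|_2^2=\||\nabla\eta|u\|_H^2$, obtained by testing the resolvent equation with $\eta^2u$, whereas you use the weak commutator identity $\eta_jR_\mu v=R_\mu[A,\eta_j]R_\mu v$ together with the duality bound $\|R_\mu\|_{H^{-1}\to H}\le C\mu^{-1/2}$, which is the same mechanism written in operator form.
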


\begin{proof}
Let $u=R_\mu v$, with $\mu\geq1$. A real cutoff
$\eta\in C^\infty(\overline\Omega)$ that vanishes near $K$ is
admissible in the Dirichlet energy identity, even when its support
meets the boundary. Testing $(\mu-\Delta)u=v$ with $\eta^2u$
and taking the real part gives
\begin{equation}\label{eq:dirichlet-caccioppoli}
 \mu\|\eta u\|_H^2+\|\nabla(\eta u)\|_2^2
       =\||\nabla\eta|u\|_H^2.
\end{equation}
We iterate \eqref{eq:dirichlet-caccioppoli} using finitely many nested
cutoff neighborhoods that separate $E$ from $K$. Each step gains a
factor $\mu^{-1/2}$ in the localized $L^2$ norm. Starting from
$\|u\|_H\leq\mu^{-1}\|v\|_H$, iteration gives, for any integer
$m\geq1$,
\[
 \|1_E R_\mu v\|_H\leq C_m\mu^{-1-m/2}\|v\|_H,
 \qquad
 \|1_E\nabla R_\mu v\|_2
       \leq C_m\mu^{-1/2-m/2}\|v\|_H.
\]
To obtain the gradient estimate, we use the gradient term
in \eqref{eq:dirichlet-caccioppoli} at the last step; the preceding $m-1$ steps use
the $L^2$ term. The constants can depend on $m$ and the separation,
which are fixed independently of $\mu$.

We insert these estimates into \eqref{eq:dirichlet-bessel-resolvent}.
The substitution $r=\kappa^2 z$ gives
\[
 \|1_E S_\kappa v\|_H\leq C_m\kappa^{-m}\|v\|_H,
 \qquad
 \|1_E\nabla S_\kappa v\|_2
       \leq C_m\kappa^{1-m}\|v\|_H.
\]
The integrals converge for $m\geq1$. Taking $m=N+1$ proves the
claim. The product rule proves the cutoff version.
\end{proof}

\begin{proof}[Proof of Proposition~\ref{thm:tangent-carleman}]
If $K_\omega=\varnothing$, then $\omega=\Omega$ and the estimate follows
from its observation term. We henceforth assume $K_\omega\ne\varnothing$.
Write $g=\nabla\psi$ and
\[
 L=\theta P\theta^{-1}
   =i\partial_t-\Delta+2\kappa g\cdot\nabla+\kappa\Delta\psi
          -\kappa^2|g|^2-i\kappa'\psi.
\]
We first consider a smooth Dirichlet function $w$ whose spatial
support is a compact subset of $\mathcal O$ and which is
flat at both temporal endpoints. Decompose $L=\mathcal A+\mathcal B$,
where
\[
 \mathcal A=i\partial_t-\Delta-\kappa^2|g|^2,
 \qquad
 \mathcal B=2\kappa g\cdot\nabla+\kappa\Delta\psi-i\kappa'\psi.
\]
Integration by parts gives
\begin{align*}
 2\Real\int_Q\mathcal A w\,\overline{\mathcal B w}
 ={}&4\int_Q\kappa\,D^2\psi(\nabla w,\nabla\overline w)
       +4\int_Q\kappa^3 D^2\psi(g,g)|w|^2\\
 &-\int_Q\kappa\Delta^2\psi\,|w|^2
       +\int_Q\kappa''\psi|w|^2
       -4\int_Q\kappa'\Imag(g\cdot\nabla w\,\overline w)\\
 &-2\int_0^T\!\int_{\partial\Omega}
                  \kappa\partial_\nu\psi\,|\partial_\nu w|^2
                  \,\mathrm d\sigma\,\dd t.
\end{align*}
The boundary integral vanishes because $\partial_\nu\psi=0$ on
$\mathcal O\cap\partial\Omega$ and $w$ vanishes near the rest of
the boundary. Strict convexity on $\mathcal O$ and the lower bound
on $|g|$ make the first two volume terms coercive. More precisely,
Young's inequality and Lemma~\ref{lem:timeweights} give constants
$c,C>0$, depending only on the spatial data, such that
\[
 2\Real(\mathcal Aw,\mathcal Bw)_{L^2(Q)}
 \geq c\int_Q\kappa|\nabla w|^2
 +\int_Q\left(c-C\frac{T^2}{s^2}
                    -C\kappa^{-2}\right)\kappa^3|w|^2.
\]
Here the last term accounts for $-\kappa\Delta^2\psi$.
We therefore take $T/s$ sufficiently small and $\inf_t\kappa(t)$
sufficiently large to obtain
\begin{equation}\label{eq:tangent-strong}
 \int_0^T\left(\kappa\|\nabla w\|_2^2
                        +\kappa^3\|w\|_H^2\right)\,\mathrm dt
       \leq C\|Lw\|_{L^2(Q)}^2.
\end{equation}
We also need the estimate for functions in
$L^2(0,T;D(A))\cap H^1(0,T;H)$ with compact temporal support
and fixed spatial support compactly contained in $\mathcal O$.
We first apply Dirichlet spectral truncation and time smoothing.
We then multiply the approximants by a fixed smooth cutoff supported in
$\mathcal O$ and equal to one near the original support.
The graph norm of $D(A)$ is equivalent to the $H^2$ norm,
so this multiplication preserves both convergences.
Since the coefficients of $L$ are bounded on the temporal support,
the estimate passes to the limit. The estimate also extends to functions in
$C^\infty([0,T];D(A))$ that are flat at both endpoints in the
graph norm. We insert temporal cutoffs and then remove them:
flatness dominates every polynomial in the time weights,
and the error $i\rho_\varepsilon'w$ tends to zero in $L^2(Q)$.

We choose $\chi,\chi_2,\rho\in C^\infty(\overline\Omega)$, taking
values in $[0,1]$, with successively nested supports in $\mathcal O$.
We require that $\chi=1$ on a neighborhood of $K_\omega$, that
$\chi_2=1$ on a neighborhood of $\supp\chi$, and that $\rho=1$
on a neighborhood of $\supp\chi_2$. In particular,
\[
 \chi=1\ \hbox{on }\Omega\setminus\omega,
 \qquad
 (\supp\nabla\chi\cup\supp\Delta\chi)\cap\Omega\subset\omega.
\]
Such cutoffs exist because $K_\omega$ is compact and is contained in the
relatively open set $\mathcal O$. Put
\[
 a=\rho\nabla\psi,
 \qquad
 \widetilde L=i\partial_t-\Delta+2\kappa a\cdot\nabla
             +\kappa\Delta\psi-\kappa^2|\nabla\psi|^2-i\kappa'\psi.
\]
Then $a\cdot\nu=0$ on the entire boundary, since the field is tangent
on $\mathcal O\cap\partial\Omega$ and vanishes on the rest. Moreover,
$\widetilde L=L$ near $\supp\chi_2$.

Let $v\in C^\infty([0,T];D(A))$ be supported spatially in
$K=\supp\chi$ and flat at both temporal endpoints in the graph
norm of $D(A)$. Set $z=S_\kappa v$ and $w=\chi_2z$.
Both functions are smooth and flat at the temporal endpoints
in the graph norm. Indeed, spectral calculus gives
\[
 \|\partial_\kappa^rS_\kappa\|_{D(A)\to D(A)}
       \leq C_r\kappa^{-r}\qquad(r=0,1,2,\ldots).
\]
Time differentiation introduces only polynomial factors in
$\kappa$ and its derivatives, which are dominated by the
flatness of $v$. Multiplication by $\chi_2$ is bounded on $D(A)$.
Lemma~\ref{lem:dirichlet-tails}
and \eqref{eq:dirichlet-bessel-energy} imply
\[
 \tfrac14\kappa^3\|v\|_H^2
       \leq\kappa\|\nabla w\|_2^2+\kappa^3\|w\|_H^2
\]
for sufficiently large $\inf_t\kappa$. In fact the energy of
$(1-\chi_2)z$ is at most $C_N\kappa^{3-2N}\|v\|_H^2$.
We have the identity
\[
 Lw=\chi_2S_\kappa\widetilde Lv
          +\chi_2[\widetilde L,S_\kappa]v
          +[\widetilde L,\chi_2]S_\kappa v.
\]
Lemma~\ref{lem:dirichlet-commutators} gives
\[
 \|[\widetilde L,S_\kappa]v\|_H
       \leq C\left(\kappa+1+\frac{|\kappa'|}{\kappa}\right)\|v\|_H
       \leq C\kappa\|v\|_H,
\]
once $s\geq T$ and $\kappa\geq1$. The tangent transport and the multiplier $-\kappa^2|\nabla\psi|^2$
each contribute $\mathcal O(\kappa)$; the multiplier $-i\kappa'\psi$
contributes $\mathcal O(|\kappa'|/\kappa)$. The cutoff commutator
is supported a positive distance from $K$, so
Lemma~\ref{lem:dirichlet-tails} gives
\[
 \|[\widetilde L,\chi_2]S_\kappa v\|_H
       \leq C_N\kappa^{1-N}\|v\|_H.
\]
By applying \eqref{eq:tangent-strong} to $w$ and absorbing the resulting
$C\int_0^T\kappa^2\|v\|_H^2\,\mathrm dt$, we obtain
\begin{equation}\label{eq:tangent-lowered}
 \int_0^T\kappa^3\|v\|_H^2\,\mathrm dt
       \leq C\int_0^T\|S_\kappa\widetilde Lv\|_H^2\,\mathrm dt.
\end{equation}

Take $v=\theta\chi p$. It has zero Dirichlet trace and is flat at
both temporal endpoints. On its support,
\[
 \widetilde Lv=\theta\chi Pp+\theta[P,\chi]p.
\]
With $q=\theta p$, the last term is
\[
 \theta[P,\chi]p
       =-2\operatorname{div}(q\nabla\chi)
          +(\Delta\chi+2\kappa\nabla\psi\cdot\nabla\chi)q.
\]
The contraction and divergence bounds of
Lemma~\ref{lem:dirichlet-commutators} imply
\[
 \|S_\kappa\widetilde Lv\|_H^2
       \leq C\left(\|\theta Pp\|_H^2
                         +\kappa^2\|\theta p\|_{L^2(\omega)}^2\right).
\]
Inserting this estimate into \eqref{eq:tangent-lowered} and using
$|p|^2\leq\chi^2|p|^2+1_\omega|p|^2$, we obtain the Carleman inequality.
Together with the strict negativity of $\psi$, this inequality supplies
the hypotheses for the weighted control construction in
Section~\ref{sec:linear} and the nonlinear argument in
Section~\ref{sec:nonlinear}, with $\vartheta=\theta$.
\end{proof}

\subsection{One dimension case}
\label{subsec:interval-geometry}

On an interval, reflection allows us to treat any nonempty open
control set. We apply the full-space estimate to two reflected pieces
and fold the resulting weights back onto the interval. We use separate
state and observation weights to avoid comparing exponential weights
across the partition of unity.

\begin{proposition}\label{prop:interval-carleman}
Let $\Omega=(0,\ell)$, where $\ell>0$, and let
$\omega\subset(0,\ell)$ be a nonempty open set. Fix $T>0$ and set
$\kappa=s/[t(T-t)]$. There are positive weights $W_s,V_s$ on $Q$,
constants $\alpha,\beta,C>0$, and $s_0>0$ such that
\begin{equation}\label{eq:interval-weight-bounds}
 \frac14e^{-2\beta\kappa}\leq W_s\leq V_s
 \leq e^{-2\alpha\kappa}
\end{equation}
and
\begin{equation}\label{eq:interval-carleman}
 \int_Q\kappa^3W_s|p|^2
 \leq C\left(
   \int_QW_s|Pp|^2+
   \int_{Q_\omega}\kappa^3V_s|p|^2
 \right)
\end{equation}
for every $s\geq s_0$ and every
$p\in C^\infty([0,T]\times[0,\ell];\C)$ satisfying
$p(t,0)=p(t,\ell)=0$. The constants depend only on $T$, the interval, and a fixed
subinterval compactly contained in $\omega$, together with the
associated cutoffs and phase. They are independent of $s$ and $p$.
No temporal endpoint condition is imposed on $p$.
\end{proposition}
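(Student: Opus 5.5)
Fix a nonempty open interval $J=(x_1-2r,x_1+2r)\Subset\omega$. The plan is to split the test function with a partition of unity adapted to $J$, reflect each piece across the nearer endpoint so that it lives on a larger domain with the unobserved boundary point interior, and apply the full-space estimate of Lemma~\ref{lem:lower} to each reflected piece with a convex phase centred away from the reflected support. Only the linear test functions are reflected, so this is legitimate. Concretely, choose $\chi_L,\chi_R\in C^\infty([0,\ell];[0,1])$ with $\chi_L+\chi_R=1$, $\chi_L=1$ on $[0,x_1-r]$, $\chi_R=1$ on $[x_1+r,\ell]$, so that $\chi_L'$ and $\chi_R'$ are supported in $J$. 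For $p$ vanishing at $0$ and $\ell$, the odd reflection of $p_L=\chi_Lp$ across $x=0$ produces a function $\tilde p_L$ on $(-(x_1-r'),x_1-r')$. It is $C^1$ in space, and $P\tilde p_L$ is the odd reflection of $P p_L$; since $\partial_x^2$ commutes with odd reflection, this holds in $L^2$ (distributionally), with no boundary flux because $p(t,0)=0$. The same construction is applied to $p_R=\chi_Rp$ across $x=\ell$.

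For each reflected piece I would apply the collar-type argument of Proposition~\ref{thm:carleman} on the symmetric interval $\Om_L=(-(x_1+2r),x_1+2r)$, with observation set $J\cup(-J)$, which is a full boundary collar of $\Om_L$. The phase is $\psi_L(x)=|x-x_L|^2-M_L$ with $x_L\notin\overline{\Om_L}$. Proposition~\ref{thm:carleman} needs smooth test functions, so I would use Lemma~\ref{lem:lower} directly on $v=\theta_L\chi\tilde p_L$, where $\chi$ cuts off inside $\Om_L$ and equals one outside $J\cup(-J)$. Here only $L^2$ bounds on $\widetilde L v$ after $S_\kappa$ are needed, and these hold for $H^1\cap$ piecewise $H^2$ functions by density. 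This yields
\[
\int_Q\kappa^3\theta_L^2|\tilde p_L|^2\le C\Bigl(\int\theta_L^2|P\tilde p_L|^2+\int_{(0,T)\times(J\cup-J)}\kappa^3\theta_L^2|\tilde p_L|^2\Bigr).
\]
The left side controls the integral over $(0,\ell)$ with weight $\theta_L^2$. Unfolding the reflection bounds the right side by integrals over $(0,\ell)$ with weight $\Theta_L(x)=\theta_L^2(x)+\theta_L^2(-x)$, and similarly $\Theta_R$ for the right piece. Now $Pp_L=\chi_LPp+[P,\chi_L]p$. The commutator involves $\chi_L'\partial_xp$, a first derivative. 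I would keep it in divergence form as in \eqref{eq:source-divergence-collar}, absorbed through the $S_\kappa\operatorname{div}$ bound inside Lemma~\ref{lem:lower}, so that it contributes only $\kappa^2\int_{J}\Theta_L|p|^2$, which is an observation term.

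Finally, set $W_s=\tfrac14\min$-type combination of the weights, namely $W_s=\tfrac12\min\{\theta_L^2,\theta_R^2\}$ restricted to $(0,\ell)$, and $V_s=\Theta_L+\Theta_R$ normalized. Using $|p|^2\le 2(|p_L|^2+|p_R|^2)$ on the left side and $|Pp_{L}|^2\le|Pp|^2$ plus commutator terms on the right, summing the two estimates gives \eqref{eq:interval-carleman}. The bounds \eqref{eq:interval-weight-bounds} follow from $1<\alpha\le-\psi_{L,R}\le\beta$ on the relevant sets after common constant shifts of the two phases. To secure $W_s\le V_s$, the state weight is dominated by the observation weight, and $V_s$ must be capped by $e^{-2\alpha\kappa}$ by choosing $\alpha$ as a common lower bound.

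The main obstacle is the mismatch of the two exponential weights across the partition. The source term $\int\Theta_L|Pp_L|^2$ must be bounded by $\int W_s|Pp|^2$, but $\Theta_L$ may exceed $\min\{\theta_L^2,\theta_R^2\}$ by a factor $e^{c\kappa}$. The remedy is to define $W_s$ on the source side as the larger weight and to keep it distinct from the state weight. That would break the stated form, so I would first try choosing the two phases mirror-symmetric about $x_1$ and with centres far away, so that $\theta_L^2(\pm x)$ and $\theta_R^2$ agree up to weights comparable on $\supp\chi_L\setminus J$ and $\supp\chi_R\setminus J$. Failing exact agreement, I would take $W_s=\max(\theta_L^2\chi_L^2,\theta_R^2\chi_R^2)$-type weights, modified only inside $J$, where any discrepancy is paid by the observation weight $V_s$; this is exactly why separate $W_s,V_s$ appear.
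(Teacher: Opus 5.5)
Your reflection step and the divergence-form treatment of $[P,\chi_L]p$ are sound. The gap is in the assembly of the weights. The weight you commit to, $W_s=\frac12\min\{\theta_L^2,\theta_R^2\}$, cannot bound the source terms. After unfolding, you need $\chi_L^2\Theta_L+\chi_R^2\Theta_R\le C\min\{\theta_L^2,\theta_R^2\}$ on $(0,\ell)$ uniformly as $\kappa\to\infty$. This amounts to pointwise inequalities between phases: $\psi_L=\psi_R$ on the open set where both cutoffs are positive, together with $\psi_L(-x)\le\psi_L(x)$ on $\supp\chi_L$ and $\psi_R(2\ell-x)\le\psi_R(x)$ on $\supp\chi_R$.

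These conditions cannot be met. Two distinct phases of the form $|x-x_*|^2-M$ agree at most at one point, so no ``mirror-symmetric'' choice with far centres makes $e^{2\kappa\psi_L}$ and $e^{2\kappa\psi_R}$ comparable on an interval. With a single strictly convex phase, the two reflection inequalities contradict each other wherever both cutoffs are positive, since there $x$ lies strictly between $-x$ and $2\ell-x$. Your fallback does not close the gap either. An excess in the \emph{source} weight cannot be ``paid by the observation weight'', because the observation term controls $p$, not $Pp$. And the max-type weight built from $\theta_j^2$ still misses the reflected parts $\theta_L^2(-x)$ and $\theta_R^2(2\ell-x)$ of the source weight.

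The mismatch is self-inflicted: it comes from replacing the left-hand side by the integral over $(0,\ell)$ with weight $\theta_L^2$. Folding is an exact identity on both sides: $\int\kappa^3\theta_L^2|\tilde p_L|^2\,\mathrm dx=\int_0^\ell\kappa^3\chi_L^2\Theta_L|p|^2\,\mathrm dx$, and likewise for the odd reflection of $\chi_LPp$. So each piece gives an estimate whose state and source weights are the \emph{same} function $\chi_L^2\Theta_L$ (resp.\ $\chi_R^2\Theta_R$). The commutator terms carry $\chi_j'$ rather than $\chi_j$, so they are observation terms with weight $\kappa^2\Theta_j$ on $J$.

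Summing the two pieces, the paper's choice $W_s=\frac14(\chi_L^2\Theta_L+\chi_R^2\Theta_R)$ and $V_s=\frac14(\Theta_L+\Theta_R)$ closes the argument without comparing exponential weights at all:
\begin{itemize}
\item $W_s\le V_s$ since $\chi_j\le1$;
\item $V_s\le e^{-2\alpha\kappa}$ since each $\Theta_j\le2e^{-2\alpha\kappa}$;
\item $W_s\ge\frac14e^{-2\beta\kappa}$ since $\chi_L^2+\chi_R^2\ge\frac12$, a bound your proposal never uses.
\end{itemize}
$W_s$ and $V_s$ differ only because the factor $\chi_j^2$ is absent from the observation terms. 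Finally, your auxiliary collar cutoff and the observation on $J\cup(-J)$ are unnecessary. The reflected pieces are compactly supported, so Lemma~\ref{lem:lower} applies to the weighted reflected piece with no observation term, and all observation comes from $[P,\chi_j]p$.
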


\begin{proof}
We choose $0<a<b<\ell$ such that $[a,b]\subset\omega$.
We take $\chi_-\in C^\infty([0,\ell];[0,1])$ with
\[
 \chi_-=1\quad\hbox{on }[0,a],\qquad
 \chi_-=0\quad\hbox{on }[b,\ell],
\]
and set $\chi_+=1-\chi_-$. The cutoffs localize near opposite endpoints,
and their nonzero derivatives are supported in $[a,b]\subset\omega$.
In particular,
\begin{equation}\label{eq:interval-cutoff-cover}
 \chi_-^2+\chi_+^2\geq\frac12.
\end{equation}

We reflect the two localized functions across the left and right
endpoints, respectively. The reflection maps are $R_-x=-x$ and $R_+x=2\ell-x$.
We define functions $q_-,q_+$ on the real line by
\[
 q_-(x)=
 \begin{cases}
  \chi_-(x)p(x),&0\leq x\leq\ell,\\
  -\chi_-(-x)p(-x),&-\ell\leq x<0,\\
  0,&\text{otherwise},
 \end{cases}
\]
and
\[
 q_+(x)=
 \begin{cases}
  \chi_+(x)p(x),&0\leq x\leq\ell,\\
  -\chi_+(2\ell-x)p(2\ell-x),&\ell<x\leq2\ell,\\
  0,&\text{otherwise}.
 \end{cases}
\]
The time variable is suppressed in these formulas.
The supports of these functions are contained in $[-b,b]$ and
$[a,2\ell-a]$, respectively. Both functions depend smoothly on time with values in $H^2(\R)$.
At the reflecting point, the Dirichlet condition makes the odd
extension continuous, and odd reflection also matches its first
derivative. The distributional second derivative therefore contains
no interface measure. This $H^2$ regularity is sufficient; higher
smoothness across the interface is unnecessary.

We choose a bounded interval $U$ containing $[-\ell,2\ell]$ in its
interior. We then choose $x_0\notin\overline U$ and $M$ so large that
\[
 \psi(x)=|x-x_0|^2-M<-1\quad\hbox{on }\overline U.
\]
Write $\vartheta_0(t,x)=e^{\kappa(t)\psi(x)}$ and
\[
 \alpha=\min_{\overline U}(-\psi)>0,\qquad
 \beta=\max_{\overline U}(-\psi).
\]
We choose a fixed compact set $K'\Subset U$ whose interior contains
both reflected supports, and fix the cutoff and coefficient
extensions in Lemma~\ref{lem:lower} for $K'$.
Set $v_j=\vartheta_0q_j$.
After applying a smooth temporal cutoff supported in
$(0,T)$, we mollify in space. The mollified functions converge in
$H^1_tL^2_x\cap L^2_tH^2_x$. All sufficiently small mollifications
have support in $K'$, so the constant in the lowering estimate is unchanged.
The coefficients of $L$ are bounded on the temporal support;
hence the images under $L$ converge in $L^2$, as do their images
under $S_\kappa L$.
Thus the lowering estimate extends to the present $H^2$ functions.
For fixed $(T,s)$, $v_j$ is flat at both temporal endpoints
in $H^2(\R)$. We can therefore remove the temporal cutoff using
$\|S_\kappa\|_{2\to2}\leq1$ and exponential flatness,
with the same estimate constant. We obtain
\begin{equation}\label{eq:interval-piece-lowered}
 \int_0^T\kappa^3\|\vartheta_0q_j\|_{L^2(\R)}^2\,\dd t
 \leq C\int_0^T
   \|S_\kappa(\vartheta_0Pq_j)\|_{L^2(\R)}^2\,\dd t,
 \qquad j\in\{-,+\}.
\end{equation}

We next estimate the cutoff commutators. Let $\mathcal O_j$ denote
odd reflection of a function on $(0,\ell)$ across the corresponding
endpoint, followed by zero extension. This notation is applied
below only to functions vanishing near the opposite endpoint.
Define $F_j,G_j$ on $(0,\ell)$ by
\[
 F_j=\chi_j'p,\qquad G_j=\chi_j''p.
\]
Extend $F_j$ evenly and $G_j$ oddly across the reflecting endpoint,
and extend them by zero elsewhere. These extensions are smooth because $\chi_j'$ and
$\chi_j''$ are supported away from both endpoints. Direct differentiation gives
\begin{equation*}
 Pq_j=\mathcal O_j(\chi_jPp)-2\partial_xF_j+G_j.
\end{equation*}
On the original interval, the last two terms equal
$-2\chi_j'p_x-\chi_j''p$. On the reflected interval, differentiating
the even extension of $F_j$ gives the sign required by odd
reflection. Thus the identity holds in $L^2$ and in distributions.
Although $\mathcal O_j(\chi_jPp)$ may jump at the reflecting point,
continuity of $q_j$ and its first derivative excludes an interface
measure. No boundary condition on $Pp$ is needed.

Multiplication by the exponential weight yields
\[
 \vartheta_0(-2\partial_xF_j+G_j)
 =-2\partial_x(\vartheta_0F_j)
     +2\kappa\psi'\vartheta_0F_j+\vartheta_0G_j.
\]
Since $\|S_\kappa\|\leq1$ and
$\|S_\kappa\partial_x\|\leq\kappa$, we can estimate the commutator in
terms of $p$, without any derivative of $p$.
We now fold the full-space integrals back onto the interval.
For this purpose, introduce the weights on $(0,\ell)$,
\[
 w_-(t,x)=\vartheta_0(t,x)^2+\vartheta_0(t,-x)^2,\qquad
 w_+(t,x)=\vartheta_0(t,x)^2
                  +\vartheta_0(t,2\ell-x)^2.
\]
All original and reflected arguments lie in $U$. Thus the bounds
determined by $\alpha$ and $\beta$ apply to every exponential factor.
At each time,
\[
 \|\vartheta_0q_j\|_2^2
       =\int_0^\ell\chi_j^2w_j|p|^2\,\dd x,\qquad
 \|\vartheta_0\mathcal O_j(\chi_jPp)\|_2^2
       =\int_0^\ell\chi_j^2w_j|Pp|^2\,\dd x.
\]
Moreover,
\[
 \|S_\kappa[
    \vartheta_0(-2\partial_xF_j+G_j)]\|_2^2
 \leq C\kappa^2\int_\omega w_j|p|^2\,\dd x
\]
for $\kappa\geq1$. Inserting these estimates in
\eqref{eq:interval-piece-lowered} gives
\[
 \int_Q\kappa^3\chi_j^2w_j|p|^2
 \leq C\left(
  \int_Q\chi_j^2w_j|Pp|^2+
  \int_{Q_\omega}\kappa^2w_j|p|^2
 \right).
\]
Finally, set
\begin{equation*}
 W_s=\frac14(\chi_-^2w_-+\chi_+^2w_+),\qquad
 V_s=\frac14(w_-+w_+).
\end{equation*}
Adding the two estimates, dividing by four, and using
$\kappa^2\leq\kappa^3$ gives
\eqref{eq:interval-carleman}. Each $w_j$ lies between
$2e^{-2\beta\kappa}$ and $2e^{-2\alpha\kappa}$.
Together with \eqref{eq:interval-cutoff-cover}, this proves
\eqref{eq:interval-weight-bounds}.
In particular, $\theta=W_s^{1/2}$ and $\vartheta=V_s^{1/2}$
satisfy \eqref{eq:abstract-weight-envelopes} with $c_*=1/2$.
For fixed $s$, these weights and their inverses are bounded
on compact subsets of $Q$.
\end{proof}

The reflection construction applies only to linear test functions
and therefore imposes no parity assumption on the nonlinearity.
The interval case of Theorem~\ref{thm:main} follows by using
$\theta=W_s^{1/2}$ and $\vartheta=V_s^{1/2}$ in the control
construction of Section~\ref{sec:controllability}.

\begin{proposition}
\label{prop:uniform-short-time-carleman}
Fix either the Euclidean local-phase geometry of
Assumption~\ref{ass:phase-geometry} or an interval with any
nonempty open control set, together with the spatial phase and cutoffs. There are constants
$a,b,C_{\rm Car},c_*,\alpha,\beta>0$, depending only on these
spatial data, with the following property. For every $T>0$, set
$\kappa(t)=s/[t(T-t)]$. If
\begin{equation}\label{eq:uniform-admissible-parameter}
 s\geq s_0(T):=aT+bT^2,
\end{equation}
there are positive weights $\theta,\vartheta$ satisfying
the envelopes \eqref{eq:abstract-weight-envelopes} and the
Carleman estimate \eqref{eq:carleman-hum-form}.
The constants are independent of $T$ and $s$. In particular,
the threshold has order $T$ as $T\downarrow0$; no additional
condition $s\geq1$ is imposed. On an interval use
$\theta=W_s^{1/2}$ and $\vartheta=V_s^{1/2}$.
\end{proposition}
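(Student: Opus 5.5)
The plan is to revisit the proofs of Propositions~\ref{thm:tangent-carleman} and~\ref{prop:interval-carleman}, and Lemmas~\ref{lem:strong} and~\ref{lem:lower}, and record exactly where $T$ and $s$ enter. In each proof the temporal quantities appear only through two combinations. The first is the ratio $T/s$, which enters through Lemma~\ref{lem:timeweights}. The second is the lower bound $\inf_t\kappa=4s/T^2$, which is used to absorb commutator errors of lower order in $\kappa$. The spatial constants are fixed once the phase and cutoffs are fixed. These include $d=\inf|g|$, $G$, $H$, the convexity constant $c_1$, the commutator constants of Lemmas~\ref{lem:bessel} and~\ref{lem:dirichlet-commutators}, and the tail constants $C_N$ of Lemmas~\ref{lem:tails} and~\ref{lem:dirichlet-tails}. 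The lemmas providing these constants are stated for every $\kappa\geq1$, so none of them depends on $T$.

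The steps, in order, are as follows.

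\textbf{Strong estimate.} In the strong estimate (Lemma~\ref{lem:strong}, or \eqref{eq:tangent-strong} in the tangent case), the coercive coefficient is $c-CT^2/s^2-C\kappa^{-2}$. This is at least $c/2$ once $s\geq a_1T$ and $4s/T^2\geq b_1$, that is, once $s\geq b_1T^2/4$.

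\textbf{Commutator bound.} In the lowering step, the commutator bound $\|[\widetilde L,S_\kappa]v\|\leq C(\kappa+1+|\kappa'|/\kappa)\|v\|$ becomes $C\kappa\|v\|$ once $s\geq T$ and $\kappa\geq1$. The second condition again only requires $s\geq T^2/4$.

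\textbf{Absorption steps.} Two absorptions remain. The first absorbs the tail energy $C_N\kappa^{3-2N}\|v\|^2$, with $N=2$. The second absorbs the error term $C_2\kappa^2\|v\|^2$. Both only require $\inf_t\kappa\geq b_2$, with $b_2$ depending on spatial data.

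\textbf{Collecting thresholds.} Collecting these conditions gives a threshold of the form $s\geq aT+bT^2$, with $a,b$ depending only on the spatial data. The resulting constant $C_{\rm Car}$ is a product of the spatial constants and is independent of $T$.

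\textbf{Final assembly.} In the last step of each Carleman proof we use $\kappa^2\leq\kappa^3$. This needs $\kappa\geq1$, which is already implied by $\inf_t\kappa\geq b_2$ if we take $b_2\geq1$. No separate condition $s\geq1$ is needed.

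\textbf{Weight envelopes.} In the Euclidean case take $\theta=\vartheta=e^{\kappa\psi}$. Since $\psi$ has been shifted to lie in $[-\beta,-\alpha]$ with $\alpha>1$, the envelopes \eqref{eq:abstract-weight-envelopes} hold with $T$-independent $\alpha,\beta$. On the interval, \eqref{eq:interval-weight-bounds} gives the envelopes with $c_*=1/2$, using $\theta=W_s^{1/2}$ and $\vartheta=V_s^{1/2}$. The constants $\alpha$ and $\beta$ there come from $\overline U$ alone. The estimates \eqref{eq:carleman} and \eqref{eq:interval-carleman} then take the form \eqref{eq:carleman-hum-form}.

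\textbf{Main obstacle.} The hardest part is checking that no hidden $T$-dependence enters through the time derivative of the regularizer or through the flatness and limiting arguments. The estimate $\|\partial_tS_{\kappa(t)}\|\leq C|\kappa'|/\kappa\leq C(T/s)\kappa$ is uniform, by Lemma~\ref{lem:timeweights}. The temporal cutoff arguments only pass to a limit for fixed $(T,s)$, so they do not affect the constants. The density and mollification steps likewise keep the constant unchanged, as noted in the proofs.

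An alternative check is to rescale time by $t=T\tau$. Then $\kappa=(s/T^2)/[\tau(1-\tau)]$ and $P$ becomes $T^{-1}i\partial_\tau-\Delta$, and the spatial scale is not changed. One should confirm, however, that the $T^{-1}$ factor only appears in the combinations $T/s$ and $s/T^2$ identified above. For that reason I would rely on the direct bookkeeping and use the rescaling only as a consistency check.
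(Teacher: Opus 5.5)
Your proposal is correct and follows essentially the same route as the paper. The paper also tracks only $\delta=T/s$ and $m=\inf_t\kappa=4s/T^2$, verifies that the strong estimate, the regularizer commutators (including $\|\partial_tS_{\kappa(t)}\|\le C\delta\kappa$), the tail and cutoff absorptions, and the density and temporal-cutoff limits taken at fixed $(T,s)$ all close once $\delta\le\delta_0$ and $m\ge M_0$, and then takes $a\ge\delta_0^{-1}$ and $b\ge M_0/4$; your time-rescaling check is not used there but is consistent with this bookkeeping.
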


\begin{proof}
We keep all spatial data fixed and track the two dimensionless
quantities that enter the preceding proofs:
\[
 \delta=\frac Ts,\qquad m=\inf_t\kappa(t)=\frac{4s}{T^2}.
\]
We show that all preceding estimates hold once $\delta\leq\delta_0$ and
$m\geq M_0$, for suitably chosen constants $\delta_0>0$ and
$M_0\geq1$ depending only on the spatial data. By
Lemma~\ref{lem:timeweights},
\[
 |\kappa'|\leq\delta\kappa^2,\qquad
 |\kappa''|\leq2\delta^2\kappa^3,\qquad
 |\kappa'|^2/\kappa\leq\delta^2\kappa^3.
\]
In the strong estimate, the mixed time term is at most
$\varepsilon\kappa|\nabla w|^2+C_\varepsilon\delta^2
\kappa^3|w|^2$. The $\kappa''$ term has the same order.
We choose $\delta_0\leq1$ sufficiently small to absorb these
remainders, with a coercivity constant depending only on the phase. For a nonquadratic phase, the additional
$\kappa\Delta^2\psi$ term is absorbed by a fixed lower bound
on $m$. The boundary flux vanishes by tangency.

The spatial regularizer and off-diagonal estimates are uniform
for $\kappa\geq1$. Their time-dependent commutators satisfy
\[
 \|\partial_tS_{\kappa(t)}\|_{2\to2}
 +\|\kappa'[\psi,S_\kappa]\|_{2\to2}
 \leq C\frac{|\kappa'|}{\kappa}
 \leq C\delta_0\kappa.
\]
Every other regularizer commutator has norm at most $C\kappa$.
Thus, with constants now independent of $T$, the lowering
argument gives the remainder
$C_2\int_0^T\!\int_D\kappa^2|v|^2\,\dd x\,\dd t$,
where $D=\R^n$ in the full-space argument and $D=\Om$ in the
Dirichlet argument (with $D=\R$ for the reflected interval).
This term is absorbed when $m\geq2C_2$. The localized energy comparison and the cutoff source estimates
require only further fixed lower bounds on $m$.
We include all these bounds in $M_0$.
Reflection and folding on an interval involve only fixed
spatial maps and introduce no additional time dependence.

The temporal cutoff and density arguments do not introduce further
thresholds. For each fixed positive pair $(T,s)$, flatness ensures that the
approximation errors tend to zero with the same estimate constants. Hence the estimates remain valid even when
$s<1$, provided $T/s\leq\delta_0$ and $4s/T^2\geq M_0$.
By taking $a\geq\delta_0^{-1}$ and $b\geq M_0/4$, we obtain
\eqref{eq:uniform-admissible-parameter}. The weight envelopes
depend only on the extrema of the spatial phases.
\end{proof}

\subsection{The Laplace--Beltrami operator}
\label{subsec:riemannian-proof}

\begin{proof}[Proof of Proposition~\ref{prop:riemannian-control}]
We extend Proposition~\ref{thm:tangent-carleman} to the Riemannian
setting, with constants independent of the time horizon. All derivatives and inner products below are taken with
respect to $g$. Put $P_g=i\partial_t-\Delta_g$ and let $A=-\Delta_g$
have its Dirichlet realization when $\partial M\ne\varnothing$.
Otherwise $A$ is the self-adjoint Laplacian on $M$.
Its group $e^{itA}$ is unitary on $L^2(M)$ in both cases.
If $K_\omega=\varnothing$, the Carleman estimate below follows
from its observation term. We may therefore assume
$K_\omega\ne\varnothing$.
Shift $\psi$ so that $\psi<-1$, and set
$\kappa=s/[t(T-t)]$, $\theta=e^{\kappa\psi}$ and
$V=\nabla_g\psi$. Then
\[
 \theta P_g\theta^{-1}=\mathcal A+\mathcal B,\qquad
 \mathcal A=i\partial_t-\Delta_g-\kappa^2|V|_g^2,
 \quad
 \mathcal B=2\kappa V\cdot\nabla_g+\kappa\Delta_g\psi-i\kappa'\psi.
\]
For a smooth function $w$ with spatial support compactly contained
in $\mathcal O$, flat at the
temporal endpoints and satisfying the Dirichlet condition where
applicable, integration by parts gives
\begin{align*}
 2\Real(\mathcal Aw,\mathcal Bw)_{L^2}
 ={}&4\int_0^T\!\int_M\kappa\operatorname{Hess}_g\psi
               (\nabla_gw,\nabla_g\bar w)
       +4\int_0^T\!\int_M\kappa^3
                \operatorname{Hess}_g\psi(V,V)|w|^2\\
 &-\int_0^T\!\int_M\kappa\Delta_g^2\psi|w|^2
       +\int_0^T\!\int_M\kappa''\psi|w|^2
       -4\int_0^T\!\int_M\kappa'
                   \Imag(V\cdot\nabla_gw\,\bar w)\\
 &-2\int_0^T\!\int_{\partial M}
           \kappa\partial_{\nu_g}\psi\,|\partial_{\nu_g}w|^2.
\end{align*}
Here and below, volume and boundary integrals are taken with respect to
$\dd V_g\,\dd t$ and $\dd S_g\,\dd t$, respectively.
The boundary term is absent on a closed manifold. No separate curvature term occurs in this scalar
identity. Indeed,
\[
 [-\Delta_g,2V\cdot\nabla_g+\Delta_g\psi]
 =-4\operatorname{div}_g
       (\operatorname{Hess}_g\psi\,\nabla_g\,\cdot\,)
       -\Delta_g^2\psi,
\]
where the Hessian is viewed as a symmetric endomorphism. This
divergence form incorporates the curvature terms produced by
commuting covariant derivatives.

The boundary term vanishes by tangency. Using
$|\kappa'|\leq(T/s)\kappa^2$ and
$|\kappa''|\leq2(T/s)^2\kappa^3$, we obtain
\begin{equation}\label{eq:riemannian-strong}
 \int_0^T\bigl(\kappa\|\nabla_gw\|_2^2
                         +\kappa^3\|w\|_2^2\bigr)\,\dd t
 \leq C\|\theta P_g\theta^{-1}w\|_{L^2}^2
\end{equation}
once $T/s\leq\delta_0$ and $\inf_t\kappa\geq M_0$, with constants
depending only on the fixed geometric data.

The order reduction also has an intrinsic form. For $\mu\geq1$ and
$\kappa\geq1$, set $R_\mu=(\mu+A)^{-1}$ and
$S_\kappa=\kappa(\kappa^2+A)^{-1/2}$.
For a smooth vector field $X$ tangent to $\partial M$, let
$E_X$ denote the symmetric part of its covariant derivative.
The identity
\[
 [A,X]u=-2\operatorname{div}_g(E_X\nabla_g u)
              +\langle\nabla_g\operatorname{div}_gX,
                                      \nabla_g u\rangle_g
\]
defines a bounded map $H^1_0(M)\to H^{-1}(M)$.
On a closed manifold these spaces are $H^1(M)$, with its full norm,
and its dual.
Tangency ensures that $XR_\mu u$ has zero boundary trace.
Consequently, the weak resolvent identity
$[X,R_\mu]=R_\mu[A,X]R_\mu$ gives
$\|[X,R_\mu]\|_{2\to2}\leq C\mu^{-1}$.
For a smooth multiplier $b$,
$[A,b]=-2\nabla_gb\cdot\nabla_g-\Delta_gb$ similarly gives
$\|[b,R_\mu]\|_{2\to2}\leq C\mu^{-3/2}$.
Integrating the resolvents as in
\eqref{eq:dirichlet-bessel-resolvent} yields
\[
 \|[X,S_\kappa]\|_{2\to2}\leq C,
 \qquad \|[b,S_\kappa]\|_{2\to2}\leq C\kappa^{-1},
 \qquad
 \|S_\kappa\operatorname{div}_g F\|_2\leq\kappa\|F\|_2.
\]
The last bound follows from spectral calculus and duality, which also
give
\[
 \kappa\|\nabla_gS_\kappa v\|_2^2
       +\kappa^3\|S_\kappa v\|_2^2=\kappa^3\|v\|_2^2,
 \qquad
 \|\partial_tS_\kappa\|_{2\to2}\leq C|\kappa'|/\kappa.
\]
These spectral estimates use only $A\geq0$ and also hold on
$\ker A$ in the closed case.
The same cutoff argument gives the off-diagonal estimates of
Lemma~\ref{lem:dirichlet-tails}. Indeed, if
$u=R_\mu v$ and $\eta$ vanishes near $\operatorname{supp}v$,
\[
 \mu\|\eta u\|_2^2+\|\nabla_g(\eta u)\|_2^2
       =\||\nabla_g\eta|_g u\|_2^2.
\]

We choose the nested cutoffs $\chi,\chi_2,\rho$ as in the proof of
Proposition~\ref{thm:tangent-carleman}, and replace $V$ in the
transport term by the globally tangent field $\rho V$.
We apply \eqref{eq:riemannian-strong} to $\chi_2S_\kappa v$,
where $v=\theta\chi p$. The preceding commutator bounds give a
remainder $C\int_0^T\kappa^2\|v\|_2^2\,\dd t$, which we
absorb by increasing $M_0$. For the localized source, we use
\[
 \theta[P_g,\chi]p
 =-2\operatorname{div}_g(\theta p\nabla_g\chi)
       +(\Delta_g\chi+2\kappa\langle V,\nabla_g\chi\rangle_g)
              \theta p.
\]
The cutoff derivatives in this identity are supported in $\omega$.
The density and temporal cutoff arguments of the Euclidean proof
apply through the spectral resolution of $A$ and elliptic regularity.
Thus, for every smooth test function $p$ with unrestricted temporal
endpoint values and homogeneous Dirichlet trace when $\partial M\ne\varnothing$,
\begin{equation}\label{eq:riemannian-carleman}
 \int_0^T\!\int_M\kappa^3\theta^2|p|^2
 \leq C\left(\int_0^T\!\int_M\theta^2|P_gp|^2
       +\int_0^T\!\int_\omega\kappa^3\theta^2|p|^2\right).
\end{equation}
This estimate holds for $s\geq aT+bT^2$, with $a,b,C$ independent of $T,s$:
take $a\geq\delta_0^{-1}$ and $4b\geq M_0$.
The weight envelopes in \eqref{eq:abstract-weight-envelopes} follow
from $\psi<-1$, with $\vartheta=\theta$.

The argument in Section~\ref{sec:controllability} now applies with $\dd x$
replaced by $\dd V_g$ and $c_f=\operatorname{Vol}_g(M)^{1/2}|f(0)|$.
The duality, bridge and contraction arguments use only this Carleman
estimate, finite volume and the unitary group of $A$.
The same argument gives the shifted estimate
\eqref{eq:shifted-lipschitz-cost}, with a constant independent of
the reference trajectory, since
$z\mapsto f(u+z)-f(u)$ has Lipschitz constant $L$ and vanishes at zero.
Using the uniform threshold, we obtain
\eqref{eq:riemannian-control-cost} by choosing the parameter
and subdividing time as in Section~\ref{subsec:uniform-cost}.
This completes the proof of the proposition.
\end{proof}

\begin{proof}[Proof of Corollary~\ref{cor:one-dimensional-manifolds}]
Write $\ell=\operatorname{Vol}_g(M)$. If $\partial M\ne\varnothing$,
arc length identifies $M$ isometrically with $[0,\ell]$.
Indeed, in a coordinate with $g=a(x)^2\,\mathrm dx^2$, the change
$s=\int_0^x a(r)\,\mathrm dr$ gives
\[
 \mathrm dV_g=\mathrm ds,\qquad
 \Delta_g=a^{-1}\partial_x(a^{-1}\partial_x)=\partial_s^2.
\]
The change of variable preserves the pointwise nonlinearity,
Dirichlet conditions, control supports, and every norm in the three
theorems. Their interval statements therefore apply directly.

If $\partial M=\varnothing$, arc length identifies $M$ with
the circle $\R/\ell\mathbb Z$. Choose a cut point inside
$\omega$, so that $K=\overline{M\setminus\omega}$ is a compact
subset of the coordinate interval $(0,\ell)$. We choose a smooth
periodic function that agrees with $(s+1)^2$ near
$K$, modifying it only near the cut inside $\omega$.
Its derivative is nonzero and its second derivative equals
$2$ near $K$. Thus the intrinsic phase hypothesis holds,
and Proposition~\ref{prop:riemannian-control} gives the Lipschitz
conclusion and the shifted estimate
\eqref{eq:shifted-lipschitz-cost} on the circle.

The remaining dispersive estimate follows from the periodic
calculation in Lemma~\ref{lem:interval-l4}. For completeness,
put $P_*=\ell^2/(2\pi)$ and write
$v(s)=\sum_n c_n e^{2\pi i n s/\ell}$. For a Fourier polynomial,
the resonance relations
\[
 n_1+n_2=n_3+n_4,\qquad n_1^2+n_2^2=n_3^2+n_4^2
\]
imply $\{n_1,n_2\}=\{n_3,n_4\}$ and hence
\[
 \int_0^{P_*}\!\int_0^\ell |e^{-it\Delta_g}v|^4
             \,\mathrm ds\,\mathrm dt
 =\ell P_*\left[2\left(\sum_n|c_n|^2\right)^2
                       -\sum_n|c_n|^4\right]
 \leq\frac{2P_*}{\ell}\|v\|_2^4.
\]
Density and a finite covering by temporal periods give the
free $L^4$ estimate uniformly for $0<\tau\leq T_*$, for each fixed
$T_*>0$.
Minkowski's inequality gives the corresponding Duhamel estimate
\eqref{eq:interval-l4} for $L^1_tL^2_x$ forcing.

The remaining arguments in Section~\ref{sec:logarithmic}
use only this estimate, finite spatial volume, unitarity of the
free group, and the shifted Lipschitz control estimate.
The truncation and mass estimates rely on pointwise and Hilbert-space
arguments, and their volume factors remain $\ell^{1/2}$.
The forced well-posedness proof and both control cascades
therefore apply on the circle, with the same exponents,
regularity, and uniqueness classes. The canonical logarithmic
cost follows from the same scalar gauge transformation.
\end{proof}

\section{Lipschitz exact controllability}\label{sec:controllability}

First, we use the weighted duality to construct a linear control operator
with zero endpoint states. To handle arbitrary endpoints, we join
two uncontrolled trajectories and correct the resulting bridge by
a contraction argument. Then, we subdivide the time interval to obtain
the control cost in terms of the control time and the Lipschitz constant.

\subsection{Weighted linear control}\label{sec:linear}

Fix $T>0$, set $Q=(0,T)\times\Om$ and
$Q_\omega=(0,T)\times\omega$, and write $H=L^2(\Om;\C)$.
The inner product $\ip{\cdot}{\cdot}$ on $H$ is linear in its first
argument. We use the Dirichlet realization
$A=-\Delta_D$ and its unitary group $U(t)=e^{itA}$, so that
$P=i\partial_t+A=i\partial_t-\Delta$.
We formulate the geometric assumptions using a common weight for
the state and source, together with a separate observation weight.
This formulation includes the folded interval weights of
Proposition~\ref{prop:interval-carleman}; the observation weight
determines the control weight in the dual construction.

Set
\[
 \kappa(t)=\frac{s}{t(T-t)},\qquad s\geq s_0>0.
\]
Suppose that positive measurable weights $\theta=\theta_s$ and
$\vartheta=\vartheta_s$ on $Q$ satisfy
\begin{equation}\label{eq:abstract-weight-envelopes}
 c_*e^{-\beta\kappa}\leq\theta\leq e^{-\alpha\kappa},
 \qquad 0<\vartheta\leq e^{-\alpha\kappa},
\end{equation}
where the inequalities hold almost everywhere and
$c_*,\alpha,\beta>0$ are independent of $s$.
No lower envelope for $\vartheta$ or comparison between the two
weights is needed. Suppose, in addition, that
\begin{equation}\label{eq:carleman-hum-form}
 \int_Q\kappa^3\theta^2\abs{q}^2\,\dd x\,\dd t
 \le C_{\rm Car}\left(
 \int_Q\theta^2\abs{Pq}^2\,\dd x\,\dd t
 +\int_{Q_\omega}\kappa^3\vartheta^2\abs{q}^2\,\dd x\,\dd t
 \right)
\end{equation}
for smooth Dirichlet test functions $q$, with a constant
$C_{\rm Car}>0$ independent of $s\geq s_0$. The test functions have
unrestricted values at $t=0$ and $t=T$. The freedom to choose these
values independently enforces both endpoint conditions on the controlled
state.
Proposition~\ref{thm:carleman},
Proposition~\ref{thm:tangent-carleman}, and
Proposition~\ref{prop:interval-carleman} ensure these hypotheses
in the geometric configurations considered here. The argument
below uses only
\eqref{eq:abstract-weight-envelopes} and
\eqref{eq:carleman-hum-form}.

For this fixed value of $s$, define the weighted Hilbert spaces
\[
\begin{aligned}
 \Ss&=L^2(Q;\kappa^{-3}\theta^{-2}\,\dd x\,\dd t),\\
 \Ys&=L^2(Q;\theta^{-2}\,\dd x\,\dd t),\\
 \Us&=L^2(Q_\omega;\kappa^{-3}\vartheta^{-2}\,\dd x\,\dd t).
\end{aligned}
\]
These spaces consist of complex-valued functions. We equip
intersections of Banach spaces with the sum norm. We also set
\begin{equation*}
 m_s=\inf_{0<t<T}\kappa(t)=\frac{4s}{T^2},\qquad
 B_s=\mathop{\rm ess\,sup}_{(t,x)\in Q}
   \kappa(t)^3\max\{\theta(t,x)^2,\vartheta(t,x)^2\}<\infty.
\end{equation*}
The finiteness of $B_s$ follows from
\eqref{eq:abstract-weight-envelopes}, since
$\sup_{a>0}a^3e^{-2\alpha a}<\infty$.
Consequently,
\begin{equation}\label{eq:weighted-embeddings}
 \norm{F}_{L^2(Q)}\le B_s^{1/2}\norm{F}_{\Ss},\qquad
 \norm{h}_{L^2(Q_\omega)}\le B_s^{1/2}\norm{h}_{\Us},\qquad
 \norm{v}_{L^2(Q)}\le\norm{v}_{\Ys}.
\end{equation}
In formulas on $Q$, a control on $Q_\omega$ is extended by zero and
denoted by $\one_\omega h$.

\begin{proposition}\label{thm:linear}
Assume \eqref{eq:abstract-weight-envelopes} and
\eqref{eq:carleman-hum-form}.
For every $s\ge s_0$ there is a bounded linear map
\[
 \Lambda_s:\Ss\longrightarrow
 \bigl(\Ys\cap C([0,T];H)\bigr)\times\Us,
 \qquad F\longmapsto(v_F,h_F),
\]
such that
\[
 Pv_F=F+\one_\omega h_F,\qquad
 v_F(0)=v_F(T)=0.
\]
The equation and the homogeneous Dirichlet boundary condition are
understood in the mild sense associated with $A$. Moreover,
\begin{equation}\label{eq:linearbound}
 \norm{v_F}_{\Ys}^2+\norm{h_F}_{\Us}^2
 \le C_{\rm Car}\norm{F}_{\Ss}^2,
\end{equation}
and
\begin{equation}\label{eq:linear-continuous-bound}
 \norm{v_F}_{C([0,T];H)}
 \le \sqrt{T}\,B_s^{1/2}(1+\sqrt{C_{\rm Car}})
       \norm{F}_{\Ss}.
\end{equation}
\end{proposition}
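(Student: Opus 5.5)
The plan is the weighted Hilbert-space duality (Fursikov--Imanuvilov type) built directly on the Carleman estimate \eqref{eq:carleman-hum-form}. Let $\mathcal P_0$ be the space of smooth Dirichlet test functions $q$ on $\overline Q$, with unrestricted values at $t=0,T$. On $\mathcal P_0$ define the bilinear form
\[
 a(q,r)=\int_Q\theta^2 Pq\,\overline{Pr}\,\dd x\,\dd t
 +\int_{Q_\omega}\kappa^3\vartheta^2 q\,\overline r\,\dd x\,\dd t .
\]
By \eqref{eq:carleman-hum-form}, $a(q,q)=0$ forces $\int_Q\kappa^3\theta^2|q|^2=0$, so $a$ is an inner product. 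Let $\mathcal P$ be the completion of $\mathcal P_0$ under it. The Carleman estimate extends to $\mathcal P$ and gives
\[
 \norm{\kappa^{3/2}\theta q}_{L^2(Q)}^2\le C_{\rm Car}\,a(q,q).
\]
For $F\in\Ss$ the functional $q\mapsto\int_QF\bar q$ then satisfies
\[
 \Bigl|\int_QF\bar q\Bigr|\le\norm F_{\Ss}\,\norm{\kappa^{3/2}\theta q}_{L^2}\le C_{\rm Car}^{1/2}\norm F_{\Ss}\,a(q,q)^{1/2}.
\]
Lax--Milgram (Riesz) gives a unique $\hat q\in\mathcal P$ with $a(\hat q,r)=\int_QF\bar r$ for all $r$, and $a(\hat q,\hat q)\le C_{\rm Car}\norm F_{\Ss}^2$.

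The candidate solution is
\[
 v_F=\theta^2P\hat q,\qquad h_F=-\kappa^3\vartheta^2\hat q\,\one_\omega ,
\]
with the sign fixed by the duality convention. We then have
\[
 \norm{v_F}_{\Ys}^2+\norm{h_F}_{\Us}^2=\int_Q\theta^2|P\hat q|^2+\int_{Q_\omega}\kappa^3\vartheta^2|\hat q|^2=a(\hat q,\hat q),
\]
which is exactly \eqref{eq:linearbound}. The Euler--Lagrange identity reads $\int_Qv_F\overline{Pr}=\int_Q(F+\one_\omega h_F)\bar r$ for all $r\in\mathcal P_0$. Because $r(0)$ and $r(T)$ are arbitrary, this is the transposition formulation of $Pv_F=F+\one_\omega h_F$ together with $v_F(0)=v_F(T)=0$ and the Dirichlet condition. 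Linearity of $F\mapsto(v_F,h_F)$ follows from uniqueness of $\hat q$.

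The main obstacle is converting this transposition identity into the mild-solution statement with $v_F\in C([0,T];H)$ and both endpoint values zero. The plan is to compare $v_F$ with the mild solution
\[
 \tilde v(t)=-i\int_0^tU(t-r)\bigl(F+\one_\omega h_F\bigr)(r)\,\dd r .
\]
This is well defined in $C([0,T];H)$, since $F$ and $h_F$ lie in $L^2$ by \eqref{eq:weighted-embeddings}. For test functions $r$ with $r(T)=0$, the mild and weak formulations coincide; this is checked via the spectral decomposition of $A$, taking $r=\varphi(t)e_j$ and using density. Subtracting the two formulations yields $\int_Q(v_F-\tilde v)\overline{Pr}=0$ for all such $r$. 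Choosing $r$ to solve $Pr=g$ with $r(T)=0$, for arbitrary $g\in C_c^\infty$, gives $v_F=\tilde v$. Next, use test functions with arbitrary $r(T)=r_T\in D(A)$. The boundary term $-i\langle\tilde v(T),r_T\rangle$ must vanish, so $\tilde v(T)=0$; $\tilde v(0)=0$ holds by construction. One point needs care: such $r$ are not smooth on $\overline Q$, so they must be approximated in the $a$-norm. This is possible because $\theta,\vartheta$ are bounded and the weights appear only as multipliers.

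Finally, \eqref{eq:linear-continuous-bound} follows from unitarity of $U$ and the Cauchy--Schwarz inequality in time. For every $t$,
\[
 \norm{v_F(t)}_H\le\sqrt T\bigl(\norm F_{L^2(Q)}+\norm{h_F}_{L^2(Q_\omega)}\bigr).
\]
By \eqref{eq:weighted-embeddings} and \eqref{eq:linearbound}, the right-hand side is at most $\sqrt T\,B_s^{1/2}\bigl(\norm F_{\Ss}+\sqrt{C_{\rm Car}}\norm F_{\Ss}\bigr)$, which is the claimed bound.
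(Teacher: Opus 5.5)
Your proposal is correct, and its core is the paper's construction: the same Hermitian form on smooth Dirichlet test functions, the same Riesz argument, the same formulas $v_F=\theta^2P\hat q$ and $h_F=-\kappa^3\vartheta^2\hat q$, and the same derivation of \eqref{eq:linearbound} and \eqref{eq:linear-continuous-bound}. You do not realize the completion as a function space, as the paper does. This is harmless if $\theta P\hat q$ and $\kappa^{3/2}\vartheta\hat q|_{Q_\omega}$ are read as the continuous extensions of $q\mapsto\theta Pq$ and $q\mapsto\kappa^{3/2}\vartheta q|_{Q_\omega}$ to the completion.

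The routes differ in how the mild formulation and the endpoint values are recovered.

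\emph{The paper's route.} It tests the transposition identity with $q=\zeta(t)e_k(x)$ for arbitrary $\zeta\in C^\infty([0,T])$. This gives $iv_k'+\lambda_kv_k=G_k$ in distributions, hence $v_k\in H^1(0,T)$. Integrating by parts with independent values $\zeta(0),\zeta(T)$ then yields $v_k(0)=v_k(T)=0$ at once. Identification with the Duhamel integral is scalar ODE uniqueness.

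\emph{Your route.} You first prove $v_F=\tilde v$ by a uniqueness-by-duality argument with backward adjoint solutions, and only afterwards extract $\tilde v(T)=0$. This works, but two points need adjusting.
\begin{enumerate}
\item You must check that the backward solution $r$ of $Pr=g$, $r(T)=0$, with $g\in C_c^\infty(Q)$, is an admissible test function. This follows from $C_c^\infty(\Om)\subset D(A^k)$ for all $k$ together with elliptic regularity up to the smooth boundary. Alternatively, extend the transposition identity by density to $r\in L^2(0,T;D(A))\cap H^1(0,T;H)$. That extension is legitimate because $v_F$ and $F+\one_\omega h_F$ lie in $L^2(Q)$.
\item The approximation issue you raise for test functions with prescribed $r(T)$ does not arise. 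The function $\zeta(t)e_k(x)$ with $\zeta(T)=1$ is already a smooth Dirichlet test function, so no approximation in the $a$-norm is needed.
\end{enumerate}

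The paper's coefficient-wise argument is shorter and uses only $e_k\in C^\infty(\overline\Om)$. Yours is the standard transposition-uniqueness argument and reaches the same conclusion once the admissibility of the backward test functions is justified.
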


\begin{proof}
Let
\[
 \mathcal D=
 \{q\in C^\infty([0,T]\times\overline\Om;\C):
              q=0\text{ on }[0,T]\times\partial\Om\}.
\]
On this space, define the Hermitian form
\begin{equation}\label{eq:hum-form}
 a_s(p,q)=
 \int_Q\theta^2Pp\,\overline{Pq}\,\dd x\,\dd t
 +\int_{Q_\omega}\kappa^3\vartheta^2p\bar q\,\dd x\,\dd t.
\end{equation}
Estimate~\eqref{eq:carleman-hum-form} shows that $a_s$ is positive
definite. Let $\Hs$ be the completion of $\mathcal D$ with respect to
the norm $a_s(q,q)^{1/2}$.

We first realize this completion as a space of functions. Let
$(p_j)$ be an $a_s$-Cauchy sequence. The sequences
$\kappa^{3/2}\theta p_j$ and $\theta Pp_j$ converge in $L^2(Q)$;
for the first sequence, this follows from~\eqref{eq:carleman-hum-form}.
The sequence $\kappa^{3/2}\vartheta p_j$ also converges in $L^2(Q_\omega)$
by the definition of $a_s$. On each compact subset of $Q$,
$\theta$ and $\kappa^{3/2}\theta$ are bounded above and below by
positive constants in view of \eqref{eq:abstract-weight-envelopes}.
Hence $p_j\to p$ in $L^2_{\rm loc}(Q)$, and the local $L^2$ limit
of $Pp_j$ agrees with $Pp$ in the sense of distributions. The local
boundedness of $\kappa^{3/2}\vartheta$ identifies the local limit of
the observation sequence as $\kappa^{3/2}\vartheta p$. This agrees
with its global $L^2(Q_\omega)$ limit. The same argument shows that
these identifications do not depend on the approximating sequence. Therefore,
$\theta Pp$, $\kappa^{3/2}\theta p$, and
$\kappa^{3/2}\vartheta p|_{Q_\omega}$ are well-defined $L^2$
functions, and both \eqref{eq:hum-form} and
\eqref{eq:carleman-hum-form} extend to $\Hs$.
This realization is injective. Indeed, if the local representative $p$
vanishes, then $Pp=0$ in the sense of distributions. Both weighted
$L^2$ limits defining $a_s$ therefore vanish, so the corresponding
completed element has zero $\Hs$ norm. We have thus obtained a
continuous embedding of $\Hs$ into $L^2_{\rm loc}(Q)$.

For $F\in\Ss$, the conjugate-linear functional
\[
 \ell_F(q)=\int_QF\bar q\,\dd x\,\dd t
\]
is continuous on $\Hs$, because
\[
 \abs{\ell_F(q)}
 \le\norm{F}_{\Ss}
       \left(\int_Q\kappa^3\theta^2\abs{q}^2\,\dd x\,\dd t\right)^{1/2}
 \le\sqrt{C_{\rm Car}}\norm{F}_{\Ss}a_s(q,q)^{1/2}.
\]
By the Riesz representation theorem, there exists a unique $p_F\in\Hs$
such that
\begin{equation}\label{eq:riesz-hum}
 a_s(p_F,q)=\ell_F(q)\qquad(q\in\Hs).
\end{equation}
Define
\begin{equation}\label{eq:hum-state-control}
 v_F=\theta^2Pp_F,\qquad
 h_F=-\kappa^3\vartheta^2p_F\big|_{Q_\omega}.
\end{equation}
Taking $q=p_F$ in~\eqref{eq:riesz-hum} gives
\[
 \norm{v_F}_{\Ys}^2+\norm{h_F}_{\Us}^2
 =a_s(p_F,p_F)
 \le\sqrt{C_{\rm Car}}\norm{F}_{\Ss}a_s(p_F,p_F)^{1/2},
\]
which proves~\eqref{eq:linearbound}. The construction is linear in
$F$.

It remains to show that the state constructed by duality is a mild
solution with zero initial and final states. Set $G=F+\one_\omega h_F$.
It follows from \eqref{eq:weighted-embeddings} that $G\in L^2(0,T;H)$ and
$v_F\in L^2(0,T;H)$. Combining \eqref{eq:riesz-hum} with
\eqref{eq:hum-state-control}, we obtain
\begin{equation}\label{eq:hum-transposition}
 \int_Qv_F\overline{Pq}\,\dd x\,\dd t
 =\int_QG\bar q\,\dd x\,\dd t
 \qquad(q\in\mathcal D).
\end{equation}
For smooth Dirichlet functions $v,q$, the underlying Green
identity is
\[
 \int_Qv\overline{Pq}\,\dd x\,\dd t
 =\int_Q(Pv)\bar q\,\dd x\,\dd t
       -i\bigl[\ip{v(t)}{q(t)}\bigr]_0^T.
\]
Since the temporal traces of $v_F$ are not yet available,
we recover them from \eqref{eq:hum-transposition}, coefficient by
coefficient in a Dirichlet eigenbasis.
Let $(e_k)_{k\ge1}$ be an orthonormal Dirichlet eigenbasis of $H$,
with $Ae_k=\lambda_ke_k$. Smoothness of the domain gives
$e_k\in C^\infty(\overline\Om)$, so
$q(t,x)=\zeta(t)e_k(x)$ is admissible for every
$\zeta\in C^\infty([0,T])$. Write
$v_k(t)=\ip{v_F(t)}{e_k}$ and $G_k(t)=\ip{G(t)}{e_k}$. Then
\begin{equation}\label{eq:spectral-transposition}
 \int_0^Tv_k(t)\bigl(-i\overline{\zeta'(t)}
                   +\lambda_k\overline{\zeta(t)}\bigr)\,\dd t
 =\int_0^TG_k(t)\overline{\zeta(t)}\,\dd t.
\end{equation}
Taking $\zeta\in C_c^\infty(0,T)$ first, we obtain
\[
 iv_k'+\lambda_kv_k=G_k
\]
in distributions. Since both $v_k$ and $G_k$ belong to $L^2(0,T)$,
we have $v_k\in H^1(0,T)$. We can therefore integrate by parts in
\eqref{eq:spectral-transposition} for arbitrary
$\zeta\in C^\infty([0,T])$. The differential equation cancels the
interior terms, leaving
\[
 -i\bigl[v_k(t)\overline{\zeta(t)}\bigr]_{t=0}^{t=T}=0.
\]
As the two endpoint values of $\zeta$ can be chosen independently,
\begin{equation}\label{eq:spectral-zero-endpoints}
 v_k(0)=v_k(T)=0\qquad(k\ge1).
\end{equation}

Define the Bochner integral
\begin{equation}\label{eq:linear-duhamel}
 w(t)=-i\int_0^tU(t-\tau)G(\tau)\,\dd\tau.
\end{equation}
Strong continuity and unitarity of $U$ imply that $w\in C([0,T];H)$
and
\[
 \norm{w}_{C([0,T];H)}
 \le\int_0^T\norm{G(t)}_H\,\dd t
 \le\sqrt{T}\norm{G}_{L^2(Q)}.
\]
For each $k$, the coefficient $\ip{w(t)}{e_k}$ and $v_k$ solve
the same scalar initial-value problem with zero initial data.
Scalar uniqueness and completeness of the eigenbasis give
$v_F=w$ almost everywhere on $Q$. We henceforth use this continuous
representative of $v_F$. Its terminal coefficients vanish by
\eqref{eq:spectral-zero-endpoints}; completeness of the
eigenbasis then gives $v_F(T)=0$. Formula~\eqref{eq:linear-duhamel}
also yields the Dirichlet mild formulation. Finally,
\eqref{eq:weighted-embeddings} and~\eqref{eq:linearbound} give
\[
 \norm{G}_{L^2(Q)}
 \le B_s^{1/2}(1+\sqrt{C_{\rm Car}})\norm{F}_{\Ss},
\]
which proves~\eqref{eq:linear-continuous-bound}.
\end{proof}

\begin{remark}
For an $L^2$ state, the Dirichlet boundary condition is encoded by
the group $U(t)$ and need not be understood as a classical spatial trace.
The temporal endpoint conditions follow from the scalar $H^1$ regularity
and the unrestricted endpoint values of the test functions
in~\eqref{eq:spectral-transposition}.
\end{remark}

\subsection{Arbitrary endpoints}
\label{sec:nonlinear}

Assume \eqref{eq:lip}, with Lipschitz constant $L\ge0$. As above, $f$ also
denotes the induced Nemytskii map on $H$. Since $\Om$ has finite measure,
\begin{equation}\label{eq:nemytskii-bounds}
 \norm{f(u)-f(v)}_H\le L\norm{u-v}_H,
 \qquad
 \norm{f(u)}_H\le L\norm{u}_H+c_f,
 \qquad c_f=\abs{\Om}^{1/2}\abs{f(0)}.
\end{equation}
In particular, no condition $f(0)=0$ is imposed.

\begin{lemma}
\label{lem:nonlinear-bridge}
Assume \eqref{eq:lip}, and fix $T>0$.
For every $y_0,y_1\in H$, there exist
$u^0,u^1\in C([0,T];H)$ satisfying $Pu^j=f(u^j)$ in the Dirichlet
mild sense and
\[
 u^0(0)=y_0,\qquad u^1(T)=y_1.
\]
There is a bridge $r\in C([0,T];H)$ with $r(0)=y_0$, $r(T)=y_1$
such that
\begin{equation}\label{eq:bridge-residual}
 R=f(r)-Pr\in\Ss
 \qquad\text{for every fixed }s\ge s_0.
\end{equation}
More precisely, $R\in C([0,T];H)$ and $R(t)=0$ for
$t\in[0,T/3]\cup[2T/3,T]$.
\end{lemma}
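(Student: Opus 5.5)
The plan is to construct everything explicitly from uncontrolled nonlinear flows and a smooth temporal partition of unity.

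\emph{Step 1: the two uncontrolled trajectories.} The forward trajectory $u^0$ is the unique mild solution of \eqref{eq:mild} with $h=0$ and initial datum $y_0$. For $u^1$ we solve backward in time. We seek $u^1\in C([0,T];H)$ with
\[
 u^1(t)=U(t-T)y_1+i\int_t^TU(t-r)f(u^1(r))\,\dd r .
\]
Because $U$ is a unitary group defined for all real times, and the Nemytskii map is globally Lipschitz on $H$ by \eqref{eq:nemytskii-bounds}, the standard Picard/Gronwall argument applies. One can use an exponentially weighted sup norm, or iterate on short intervals. This gives a unique fixed point on $[0,T]$ with $u^1(T)=y_1$.

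We then check that this backward formula is equivalent to the forward mild formulation. Apply $U(t)$ to the identity at time $0$ and subtract. This shows that $u^1(t)=U(t)u^1(0)-i\int_0^tU(t-r)f(u^1(r))\,\dd r$, so $Pu^1=f(u^1)$ in the Dirichlet mild sense.

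\emph{Step 2: the bridge.} Choose $\eta\in C^\infty([0,T];[0,1])$ with $\eta=1$ on $[0,T/3]$ and $\eta=0$ on $[2T/3,T]$, and set
\[
 r=\eta u^0+(1-\eta)u^1 .
\]
Then $r\in C([0,T];H)$, $r(0)=y_0$ and $r(T)=y_1$. Since $P$ is linear and acts on products with scalar time functions by the mild product rule, we get, in the mild sense,
\[
 Pr=\eta f(u^0)+(1-\eta)f(u^1)+i\eta'(u^0-u^1).
\]
To justify this, verify that
\[
 r(t)=U(t)y_0-i\int_0^tU(t-\tau)\bigl(\eta f(u^0)+(1-\eta)f(u^1)+i\eta'(u^0-u^1)\bigr)\,\dd\tau .
\]
This is an integration by parts in $\tau$ against $\eta(\tau)U(t-\tau)$, most easily done coefficientwise in the Dirichlet eigenbasis as in the proof of Proposition~\ref{thm:linear}.

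Hence we define
\[
 R=f(r)-\eta f(u^0)-(1-\eta)f(u^1)-i\eta'(u^0-u^1)\in C([0,T];H).
\]
Continuity holds because $f$ is Lipschitz on $H$ and all factors are continuous. On $[0,T/3]$ we have $r=u^0$, $\eta=1$ and $\eta'=0$, so $R=0$ there. Symmetrically, $R=0$ on $[2T/3,T]$.

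\emph{Step 3: membership in $\Ss$.} On the support $[T/3,2T/3]$, $\kappa$ is bounded by $9s/(2T^2)$. The lower envelope in \eqref{eq:abstract-weight-envelopes} gives $\theta^{-2}\le c_*^{-2}e^{2\beta\kappa}$, which is bounded there. Therefore
\[
 \|R\|_{\Ss}^2\le \sup_{[T/3,2T/3]}\kappa^{-3}c_*^{-2}e^{2\beta\kappa}\,T\,\|R\|_{C([0,T];H)}^2<\infty ,
\]
for each fixed $s$.

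The main obstacle is only bookkeeping. One must make sure the backward mild solution exists globally, which holds with no smallness because $f$ is globally Lipschitz. One must also make sure the product rule for $P(\eta u)$ holds at the mild level. The eigenbasis reduction to scalar ODEs $iv_k'+\lambda_kv_k=G_k$ handles the latter cleanly.
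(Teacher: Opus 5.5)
Your proposal is correct and follows essentially the same route as the paper: forward and backward uncontrolled flows obtained from the globally Lipschitz Nemytskii map, the cutoff bridge $r=\eta u^0+(1-\eta)u^1$ with the product rule for $Pr$, and boundedness of $\kappa^{-3}\theta^{-2}$ on $[T/3,2T/3]$ from the lower envelope. Your explicit check that the backward mild formula is equivalent to the forward one, and your coefficientwise justification of the product rule, simply fill in details that the paper states briefly.
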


\begin{proof}
For data $z\in H$ at any time $t_0\in[0,T]$, the mild equation is
\begin{equation*}
 u(t)=U(t-t_0)z
      -i\int_{t_0}^tU(t-\tau)f(u(\tau))\,\dd\tau,
\end{equation*}
where the integral for $t<t_0$ is interpreted with the usual orientation.
By \eqref{eq:nemytskii-bounds}, the right-hand side is a contraction
on sufficiently short intervals in $C(H)$. We iterate this construction
in either time direction. The linear growth bound prevents blow-up
on $[0,T]$. More precisely, set
\begin{equation*}
 \Phi_L(a)=
 \begin{cases}
 (e^{La}-1)/L,&L>0,\\
 a,&L=0,
 \end{cases}
 \qquad a\ge0,
\end{equation*}
and apply the integral estimate and Gronwall's inequality to obtain
\begin{equation}\label{eq:flow-growth}
 \norm{u(t)}_H
 \le e^{L\abs{t-t_0}}\norm{z}_H
       +c_f\Phi_L(\abs{t-t_0}).
\end{equation}
This proves existence of both the forward trajectory $u^0$ and the
backward trajectory $u^1$.

Choose a real-valued function $\eta\in C^\infty([0,T])$ such that
\begin{equation}\label{eq:bridge-cutoff}
 0\le\eta\le1,\qquad
 \eta=1\text{ on }[0,T/3],\qquad
 \eta=0\text{ on }[2T/3,T],\qquad
 \norm{\eta'}_{L^\infty}\le c_\eta/T,
\end{equation}
where $c_\eta$ is an absolute constant determined by a fixed cutoff.
Set
\begin{equation*}
 r=\eta u^0+(1-\eta)u^1.
\end{equation*}
By the product rule, interpreted in the mild equations or their
weak formulation against $D(A)$, we obtain
\[
 Pr=\eta f(u^0)+(1-\eta)f(u^1)+i\eta'(u^0-u^1).
\]
Thus the residual in~\eqref{eq:bridge-residual} is the
$C([0,T];H)$ function
\begin{equation}\label{eq:residual-formula}
 R=f(r)-\eta f(u^0)-(1-\eta)f(u^1)-i\eta'(u^0-u^1).
\end{equation}
It vanishes for $t\le T/3$ and for $t\ge2T/3$. On the intervening
closed time interval, the weight $\kappa^{-3}\theta^{-2}$ is
bounded for each fixed $s$. It follows that $R\in\Ss$.
\end{proof}

\begin{proof}[Proof of exact controllability in Theorem~\ref{thm:main}]
Under the geometric assumption of the theorem, the corresponding
Carleman estimate gives weights satisfying \eqref{eq:abstract-weight-envelopes}
and \eqref{eq:carleman-hum-form}. In particular, for an arbitrary
nonempty open control set on an interval, take
$\theta=W_s^{1/2}$ and $\vartheta=V_s^{1/2}$ from
Proposition~\ref{prop:interval-carleman}; then $c_*=1/2$.
Fix $y_0,y_1\in H$ and construct $r$ and $R$ as in
Lemma~\ref{lem:nonlinear-bridge}. For $F\in\Ss$, let
$(v_F,h_F)=\Lambda_sF$ be the pair given by
Proposition~\ref{thm:linear}. To correct the bridge, define
\begin{equation}\label{eq:nonlinear-fixed-map}
 \mathcal N_s(F)=R+f(r+v_F)-f(r).
\end{equation}
The pointwise Lipschitz estimate gives
\begin{align*}
 \norm{f(r+v_F)-f(r)}_{\Ss}^2
 &\le L^2\int_Q\kappa^{-3}\theta^{-2}\abs{v_F}^2\,\dd x\,\dd t\\
 &\le L^2m_s^{-3}\norm{v_F}_{\Ys}^2.
\end{align*}
Since $R\in\Ss$, it follows that $\mathcal N_s$ maps $\Ss$ into
itself. Applying the same estimate to the difference of two sources
and using the linearity of $\Lambda_s$ together with
\eqref{eq:linearbound}, we obtain
\begin{equation}\label{eq:contraction-rate}
 \norm{\mathcal N_s(F)-\mathcal N_s(\widetilde F)}_{\Ss}
 \le q_s\norm{F-\widetilde F}_{\Ss},\qquad
 q_s=L\sqrt{C_{\rm Car}}\,m_s^{-3/2}.
\end{equation}
Since $C_{\rm Car}$ is independent of $s$, we can choose
$s\ge s_0$ such that $q_s<1$. This choice imposes no smallness
condition on $L$ or the endpoint states.

By the Banach fixed-point theorem, there exists a unique fixed point
$F_*\in\Ss$ for this choice of bridge and control operator, satisfying
\begin{equation*}
 F_*=R+f(r+v_{F_*})-f(r).
\end{equation*}
Since $\mathcal N_s(0)=R$, the fixed point satisfies
\begin{equation}\label{eq:fixed-source-bound}
 \norm{F_*}_{\Ss}\le\frac{\norm{R}_{\Ss}}{1-q_s}.
\end{equation}
Define
\begin{equation*}
 y=r+v_{F_*},\qquad h=h_{F_*}.
\end{equation*}
The bridge and the endpoint conditions for the linear correction give
$y\in C([0,T];H)$, $y(0)=y_0$, and $y(T)=y_1$. Moreover,
\begin{align*}
 Py
 &=Pr+F_*+\one_\omega h\\
 &=Pr+R+f(r+v_{F_*})-f(r)+\one_\omega h\\
 &=f(y)+\one_\omega h.
\end{align*}
The right-hand side belongs to $L^2(0,T;H)$ and hence to
$L^1(0,T;H)$. Adding the Dirichlet mild formulations for $r$
and $v_{F_*}$ therefore yields
\begin{equation*}
 y(t)=U(t)y_0-i\int_0^tU(t-\tau)
          \bigl(f(y(\tau))+\one_\omega h(\tau)\bigr)\,\dd\tau.
\end{equation*}
Finally, $h\in L^2(Q_\omega)$ by
\eqref{eq:weighted-embeddings}. For this fixed control, uniqueness
of the mild solution follows from~\eqref{eq:nemytskii-bounds} and
Gronwall's inequality. This proves the asserted exact
controllability for arbitrary $L^2$ endpoints.
The uniform bound \eqref{eq:unified-lipschitz-cost} is proved in
Section~\ref{subsec:uniform-cost}.
\end{proof}

\begin{proposition}
\label{prop:iteration}
Under the assumptions of
Theorem~\ref{thm:main}, let
$r$, $R$, $\Lambda_s$, and
$\mathcal N_s$ be the bridge, residual, control operator, and nonlinear
source map constructed above, respectively.
Fix $s\geq s_0$ with
$q_s=L\sqrt{C_{\rm Car}}m_s^{-3/2}<1$.
Let $F_*$ be the unique fixed point of $\mathcal N_s$, and set
$y=r+v_{F_*}$ and $h=h_{F_*}$. Define
\[
 F^{(0)}=0,\qquad F^{(n+1)}=\mathcal N_s(F^{(n)}),\qquad
 (v^{(n)},h^{(n)})=\Lambda_sF^{(n)},\qquad
 y^{(n)}=r+v^{(n)}.
\]
Then $F^{(n)}\to F_*$ in $\Ss$, $v^{(n)}\to v_{F_*}$ in
$\Ys\cap C([0,T];H)$, and $h^{(n)}\to h$ in $\Us$ and in
$L^2(Q_\omega)$. More precisely, for every $n\ge0$, with $q_s^0=1$,
\begin{equation}\label{eq:iteration-source-rate}
 \norm{F^{(n)}-F_*}_{\Ss}
 \le\frac{q_s^n}{1-q_s}\norm{R}_{\Ss},
\end{equation}
and
\begin{equation}\label{eq:iteration-pair-rate}
 \left(\norm{v^{(n)}-v_{F_*}}_{\Ys}^2
       +\norm{h^{(n)}-h}_{\Us}^2\right)^{1/2}
 \le\frac{\sqrt{C_{\rm Car}}q_s^n}{1-q_s}\norm{R}_{\Ss}.
\end{equation}
The convergence in $C([0,T];H)$ satisfies the estimate
\begin{equation}\label{eq:iteration-continuous-rate}
 \norm{y^{(n)}-y}_{C([0,T];H)}
 \le\sqrt T\,B_s^{1/2}(1+\sqrt{C_{\rm Car}})
       \frac{q_s^n}{1-q_s}\norm{R}_{\Ss}.
\end{equation}
Every iterate satisfies the endpoint conditions $y^{(n)}(0)=y_0$ and
$y^{(n)}(T)=y_1$ exactly, and its nonlinear equation has the residual
\begin{equation}\label{eq:iteration-residual}
 Py^{(n)}-f(y^{(n)})-\one_\omega h^{(n)}
 =F^{(n)}-F^{(n+1)},\qquad
 \norm{F^{(n)}-F^{(n+1)}}_{\Ss}
 \le q_s^n\norm{R}_{\Ss}.
\end{equation}
\end{proposition}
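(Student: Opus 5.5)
The plan is to treat Proposition~\ref{prop:iteration} as a direct consequence of the Banach fixed-point theorem, combined with the linear bounds of Proposition~\ref{thm:linear}. First I show that $\mathcal N_s$ is a $q_s$-contraction on $\Ss$, as in \eqref{eq:contraction-rate}, and that $\mathcal N_s(0)=R$. Since $F^{(0)}=0$, the standard a priori estimate for Picard iterates gives
\[
\norm{F^{(n)}-F_*}_{\Ss}\le q_s^n\norm{F^{(0)}-F_*}_{\Ss}.
\]
Combining this with $\norm{F_*}_{\Ss}\le\norm{R}_{\Ss}/(1-q_s)$ from \eqref{eq:fixed-source-bound} yields \eqref{eq:iteration-source-rate}. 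Similarly,
\[
\norm{F^{(n+1)}-F^{(n)}}_{\Ss}\le q_s^n\norm{F^{(1)}-F^{(0)}}_{\Ss}=q_s^n\norm{R}_{\Ss},
\]
which is the bound in \eqref{eq:iteration-residual}.

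Next I transfer these rates to the pair $(v,h)$ using linearity of $\Lambda_s$. Writing $(v^{(n)}-v_{F_*},h^{(n)}-h)=\Lambda_s(F^{(n)}-F_*)$, estimate \eqref{eq:linearbound} gives \eqref{eq:iteration-pair-rate}. Estimate \eqref{eq:linear-continuous-bound} gives \eqref{eq:iteration-continuous-rate}, because $y^{(n)}-y=v^{(n)}-v_{F_*}$. Convergence of $h^{(n)}$ in $L^2(Q_\omega)$ then follows from the embedding \eqref{eq:weighted-embeddings}, at the cost of a factor $B_s^{1/2}$. The endpoint conditions are immediate: $r(0)=y_0$ and $r(T)=y_1$, while $v^{(n)}(0)=v^{(n)}(T)=0$ by Proposition~\ref{thm:linear}.

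Finally, I compute the residual in the mild sense, using the linear equation $Pv^{(n)}=F^{(n)}+\one_\omega h^{(n)}$ and the bridge identity $Pr=f(r)-R$. Adding these,
\[
Py^{(n)}-f(y^{(n)})-\one_\omega h^{(n)}=F^{(n)}-\bigl(R+f(r+v^{(n)})-f(r)\bigr)=F^{(n)}-\mathcal N_s(F^{(n)})=F^{(n)}-F^{(n+1)}.
\]
The only point needing care is the interpretation of this identity. Every term lies in $L^2(0,T;H)$: for $f(y^{(n)})$ and $f(r)$ this follows from \eqref{eq:nemytskii-bounds}, and for $F^{(n)}$ from \eqref{eq:weighted-embeddings}. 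The identity is therefore meant as the statement that $y^{(n)}$ satisfies the Dirichlet mild formula with this forcing, obtained by adding the mild formulations of $r$ and $v^{(n)}$ exactly as in the proof of Theorem~\ref{thm:main}.

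I expect no real obstacle. The one step that needs attention is checking that the mild-sense bookkeeping for $r$ from Lemma~\ref{lem:nonlinear-bridge} is compatible with that of $\Lambda_s$.
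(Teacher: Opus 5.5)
Your proposal is correct and follows the paper's proof essentially step by step. The contraction property together with $\mathcal N_s(0)=R$ gives the source rates, and linearity of $\Lambda_s$ with \eqref{eq:linearbound} and \eqref{eq:linear-continuous-bound} gives the pair and continuous rates. Proposition~\ref{thm:linear} gives the endpoints, and substituting $Pr=f(r)-R$ and \eqref{eq:nonlinear-fixed-map} gives the residual identity. The only cosmetic difference is that you obtain \eqref{eq:iteration-source-rate} from $\norm{F^{(n)}-F_*}_{\Ss}\le q_s^n\norm{F_*}_{\Ss}$ and \eqref{eq:fixed-source-bound}, while the paper sums the geometric bound on $\norm{F^{(k+1)}-F^{(k)}}_{\Ss}$ over $k\ge n$; both yield the same constant.
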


\begin{proof}
Because $F^{(1)}=R$, estimate~\eqref{eq:contraction-rate} gives
\[
 \norm{F^{(n+1)}-F^{(n)}}_{\Ss}
 \le q_s^n\norm{R}_{\Ss}.
\]
Summing this geometric bound from $n$ to infinity yields
\eqref{eq:iteration-source-rate}. By the linearity of $\Lambda_s$ and
\eqref{eq:linearbound}, we obtain~\eqref{eq:iteration-pair-rate}, while
\eqref{eq:linear-continuous-bound} yields
\eqref{eq:iteration-continuous-rate}. The control also converges in the
unweighted space by~\eqref{eq:weighted-embeddings}. Each iterate has the
required endpoint values by Proposition~\ref{thm:linear}. Substituting
\eqref{eq:bridge-residual} and
\eqref{eq:nonlinear-fixed-map} gives the identity in
\eqref{eq:iteration-residual}.
\end{proof}

\begin{proposition}\label{prop:cost}
Under the assumptions of
Theorem~\ref{thm:main}, let $r$ and
$R$ be the bridge and residual in its proof.
For each $s\geq s_0$ with $q_s<1$, the control constructed
from these data satisfies
\begin{equation}\label{eq:weighted-residual-cost}
 \norm{h}_{L^2(Q_\omega)}
 \le\frac{\sqrt{C_{\rm Car}B_s}}{1-q_s}\norm{R}_{\Ss}.
\end{equation}
There is a choice of $s$, depending on $L$ and on the fixed
geometric and temporal parameters, for which
\begin{equation}\label{eq:explicit-control-cost}
 \begin{aligned}
 \norm{h}_{L^2(Q_\omega)}
 \le{}& C_T(1+L)
       \exp\!\left(LT+C_T(1+L^{2/3})\right)\\
 &\quad\times
 \left(\norm{y_0}_H+\norm{y_1}_H
       +T\abs{\Om}^{1/2}\abs{f(0)}\right).
 \end{aligned}
\end{equation}
Here $C_T$ can depend on $\Om$, $\omega$, $T$ and the fixed
constants $s_0,C_{\rm Car},c_*,\alpha,\beta$ in
\eqref{eq:abstract-weight-envelopes}--\eqref{eq:carleman-hum-form}.
It is independent of $f$, $L$, $y_0$ and $y_1$.
\end{proposition}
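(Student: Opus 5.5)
The estimate \eqref{eq:weighted-residual-cost} follows directly from the contraction construction. By \eqref{eq:weighted-embeddings}, $\|h\|_{L^2(Q_\omega)}\le B_s^{1/2}\|h_{F_*}\|_{\Us}$. By \eqref{eq:linearbound}, $\|h_{F_*}\|_{\Us}\le\sqrt{C_{\rm Car}}\|F_*\|_{\Ss}$. The fixed-point bound \eqref{eq:fixed-source-bound} then gives $\|F_*\|_{\Ss}\le\|R\|_{\Ss}/(1-q_s)$. Chaining these three inequalities gives the first claim.

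For \eqref{eq:explicit-control-cost}, I will choose $s$ so that $q_s\le1/2$. Since $q_s=L\sqrt{C_{\rm Car}}(T^2/4s)^{3/2}$, a suitable choice is
\[
 s=\max\{s_0,\;c\,T^2(\sqrt{C_{\rm Car}}L)^{2/3}\},\qquad c=2^{2/3}/4 .
\]
With this choice $s\le C_T(1+L^{2/3})$. Next I bound $B_s$ and the weighted norm of $R$. Since $\kappa^3\max\{\theta^2,\vartheta^2\}\le\kappa^3e^{-2\alpha\kappa}$, we get $B_s\le\sup_{a>0}a^3e^{-2\alpha a}=C$, independently of $s$. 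On the support $[T/3,2T/3]$ of $R$ we have $\kappa\le 9s/(2T^2)$, and the lower envelope $\theta\ge c_*e^{-\beta\kappa}$ gives
\[
 \kappa^{-3}\theta^{-2}\le m_s^{-3}c_*^{-2}e^{9\beta s/T^2}.
\]
Hence
\[
 \|R\|_{\Ss}\le C\,m_s^{-3/2}e^{Cs/T^2}\|R\|_{L^2(Q)}\le C_T e^{C_T(1+L^{2/3})}\|R\|_{L^2(Q)},
\]
where $m_s^{-3/2}$ is bounded because $s\ge s_0$.

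It remains to estimate $\|R\|_{L^2(Q)}$ through \eqref{eq:residual-formula}. Using \eqref{eq:nemytskii-bounds} and $f(r)-\eta f(u^0)-(1-\eta)f(u^1)=\eta(f(r)-f(u^0))+(1-\eta)(f(r)-f(u^1))$, together with $r-u^0=(1-\eta)(u^1-u^0)$ and the analogous identity for $r-u^1$, each pointwise term is bounded by $(2L+c_\eta/T)|u^0-u^1|$. Then \eqref{eq:flow-growth} gives
\[
 \|u^j\|_{C([0,T];H)}\le e^{LT}(\|y_j\|_H+Tc_f),
\]
because $\Phi_L(T)\le Te^{LT}$. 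Therefore
\[
 \|R\|_{L^2(Q)}\le\sqrt T(2L+c_\eta/T)\,2e^{LT}(\|y_0\|_H+\|y_1\|_H+Tc_f)\le C_T(1+L)e^{LT}(\cdots).
\]
Combining this with the bound on $\|R\|_{\Ss}$ and with $1/(1-q_s)\le2$ yields \eqref{eq:explicit-control-cost}.

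The only delicate point is the weighted norm of $R$. The factor $e^{Cs/T^2}$ there is exactly what produces the $e^{C_TL^{2/3}}$ term. This requires the lower envelope on $\theta$, and the argument must not use the comparison between $\theta$ and $\vartheta$, which the hypotheses do not provide. Everything else is bookkeeping of constants in $T$, which the statement lets $C_T$ absorb.
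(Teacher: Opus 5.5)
Your proof is correct and follows the paper's argument. You chain the weighted embedding, the linear bound and the fixed-point bound; you choose $s$ so that $q_s\le1/2$ with $s\le C_T(1+L^{2/3})$; you bound $\|R\|_{\Ss}$ through the lower envelope on the support $[T/3,2T/3]$; and you estimate $\|R\|_{L^2(Q)}$ from the residual formula and the flow growth bound.

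The differences are cosmetic. The paper uses the sharper bound $B_s^{1/2}\le m^{3/2}e^{-\alpha m}$ to cancel the factor $m^{-3/2}$, which yields the exponent $(9\beta/8-\alpha)m$, and it uses the pointwise constant $L/2$ in place of your $2L$. Your uniform bound on $B_s$ and your coarser constants change only the value of $C_T$.
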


\begin{proof}
Estimate~\eqref{eq:weighted-residual-cost} follows by combining
\eqref{eq:weighted-embeddings}, \eqref{eq:linearbound},
and~\eqref{eq:fixed-source-bound}.

To obtain the explicit bound, use $\alpha,\beta,c_*$ from
\eqref{eq:abstract-weight-envelopes} and set
\[
 m_0=\max\left\{\frac{4s_0}{T^2},1,\frac{3}{2\alpha}\right\}.
\]
Choose
\begin{equation}\label{eq:cost-parameter-choice}
 m=\max\{m_0,(2L\sqrt{C_{\rm Car}})^{2/3}\},\qquad
 s=\frac{T^2m}{4}.
\end{equation}
Then $s\ge s_0$, $m_s=m$, and $q_s\le1/2$. Moreover,
$m\le C_T(1+L^{2/3})$. Since
$a\mapsto a^3e^{-2\alpha a}$ is decreasing for
$a\ge3/(2\alpha)$,
\begin{equation}\label{eq:Bs-cost-bound}
 B_s^{1/2}\le m^{3/2}e^{-\alpha m}.
\end{equation}
On the support of $R$, the choice of cutoff gives
\[
 m\le\kappa(t)\le\frac98m.
\]
It follows that
\begin{equation}\label{eq:weighted-R-cost-bound}
 \norm{R}_{\Ss}
 \le c_*^{-1}m^{-3/2}e^{9\beta m/8}\norm{R}_{L^2(Q)}.
\end{equation}

We next estimate the unweighted residual. Set
$d=\norm{y_0}_H+\norm{y_1}_H$. By \eqref{eq:flow-growth},
we have, for $0\le t\le T$,
\begin{align*}
 \norm{u^0(t)-u^1(t)}_H
 &\le e^{Lt}\norm{y_0}_H+e^{L(T-t)}\norm{y_1}_H
       +c_f\bigl(\Phi_L(t)+\Phi_L(T-t)\bigr)\\
 &\le e^{LT}d+c_f\Phi_L(T)
 \le e^{LT}(d+Tc_f).
\end{align*}
For $L>0$, the middle inequality follows from
$(e^{Lt}-1)(e^{L(T-t)}-1)\ge0$; for $L=0$, it is immediate.
For the nonlinear part of the residual, the pointwise Lipschitz
condition gives
\begin{align*}
 &\abs{f(\eta u^0+(1-\eta)u^1)
          -\eta f(u^0)-(1-\eta)f(u^1)}\\
 &\qquad\le
 \eta L(1-\eta)\abs{u^0-u^1}
 +(1-\eta)L\eta\abs{u^0-u^1}
 \le\frac L2\abs{u^0-u^1}.
\end{align*}
Using~\eqref{eq:residual-formula}, the support of $R$, and
\eqref{eq:bridge-cutoff}, we obtain
\begin{equation}\label{eq:unweighted-R-cost-bound}
 \norm{R}_{L^2(Q)}
 \le\sqrt{T/3}\left(\frac L2+\frac{c_\eta}{T}\right)
       e^{LT}(d+Tc_f)
 \le C_T(1+L)e^{LT}(d+Tc_f).
\end{equation}
Combining~\eqref{eq:weighted-residual-cost}--
\eqref{eq:unweighted-R-cost-bound} and using $(1-q_s)^{-1}\le2$,
we cancel the factors $m^{3/2}$ and $m^{-3/2}$ and obtain
\[
 \norm{h}_{L^2(Q_\omega)}
 \le C_T(1+L)e^{LT}e^{(9\beta/8-\alpha)m}(d+Tc_f).
\]
Using the bound on $m$ following~\eqref{eq:cost-parameter-choice}
and increasing $C_T$ if necessary, we obtain
\eqref{eq:explicit-control-cost}.
\end{proof}

\begin{remark}

The term involving $f(0)$ in~\eqref{eq:explicit-control-cost}
accounts for the forcing present even at the zero state. It cannot
in general be omitted from a bound valid for all globally
Lipschitz nonlinearities and all endpoint data. For example, if
$f\equiv c\ne0$ and $y_0=y_1=0$, the uncontrolled final state is
$-ic\int_0^TU(T-t)\one_\Om\,\dd t$, which is nonzero at
nonresonant times.

The nonlinear endpoint trajectories in
Lemma~\ref{lem:nonlinear-bridge} ensure that the residual vanishes near
the endpoints. For a general $f$, replacing these trajectories by
solutions of $Pu=f(0)$ leaves the residual $f(r)-f(0)$ near the
endpoints. This residual need not belong to $\Ss$ because of the
exponentially growing weight. If $f$
is real-affine, its affine flow provides the required
endpoint trajectories. For example, when
$f(z)=az+b\bar z+c$, the real cutoff satisfies
$f(r)=\eta f(u^0)+(1-\eta)f(u^1)$, so the residual reduces exactly
to $R=-i\eta'(u^0-u^1)$.
\end{remark}

\begin{remark}
The Lipschitz contraction argument alone does not apply to a general
locally Lipschitz or superlinear nonlinearity. For example, if
$g(z)=|z|^{p-1}z$ with $p>1$, the available difference estimate is
\[
 |g(z)-g(w)|\leq C_p(|z|+|w|)^{p-1}|z-w|.
\]
The factor involving the states is not controlled by the
$\Ys$ norm. A weighted $L^2$ estimate does not imply a pointwise bound,
and $g$ does not map every $L^2(\Om)$ state into $L^2(\Om)$.
Thus a large Carleman parameter alone does not make this map
contractive on the source space used here. Truncation gives
a controlled solution of the truncated equation, but passing to
the original equation requires an additional bound uniform in the
truncation level.

For the logarithmic class in Section~\ref{sec:logarithmic}, we use
the truncation and stability argument given below.
\end{remark}

\subsection{Uniform control costs}\label{subsec:uniform-cost}

\begin{proof}[Proof of the cost bound in Theorem~\ref{thm:main}]
We first derive a control estimate on a single interval of length
$0<\tau\leq1$. We then concatenate the resulting controls, choosing
the interval length to limit the growth of the uncontrolled trajectories.
Put $c_f=|\Om|^{1/2}|f(0)|$, $a_j=\tau c_f$, and
$d_j=\|z_0\|_2+\|z_1\|_2$ for the two endpoints.
The bridge in Lemma~\ref{lem:nonlinear-bridge}, together with
\eqref{eq:flow-growth} and \eqref{eq:unweighted-R-cost-bound}, gives
\[
 \|r\|_{C L^2}\leq e^{L\tau}(d_j+a_j),\qquad
 \|R\|_{L^2}\leq C\sqrt\tau(L+\tau^{-1})
                         e^{L\tau}(d_j+a_j).
\]
For the linear map in Proposition~\ref{thm:linear}, define
\[
 \mathcal N_s(F)=R+f(r+v_F)-f(r).
\]
Writing $m=4s/\tau^2$, the weighted embedding and
\eqref{eq:linearbound} give
\[
 \|\mathcal N_s(F)-\mathcal N_s(G)\|_{\Ss}
 \leq L\sqrt{C_{\rm Car}}\,m^{-3/2}\|F-G\|_{\Ss}.
\]
The nonlinear difference belongs to the weighted source space,
and $\mathcal N_s(0)=R$. When the factor above is less than one,
this map has a unique fixed point, which gives the controlled pair
$y=r+v_F$, $h=h_F$.
The growth bound \eqref{eq:nemytskii-bounds} ensures that the
controlled state satisfies the mild equation \eqref{eq:mild}.
Using the uniform Carleman threshold, choose
\[
 m=\max\left\{\frac{4a}{\tau}+4b,\,1,\,
     \frac3{2\alpha},\,(2L\sqrt{C_{\rm Car}})^{2/3}\right\},
 \qquad s=\frac{\tau^2m}{4}.
\]
Then the contraction factor is at most $1/2$ and
$m\leq C(\tau^{-1}+L^{2/3})$. Combining \eqref{eq:Bs-cost-bound} with
\eqref{eq:weighted-R-cost-bound}, we cancel the factors $m^{3/2}$
and $m^{-3/2}$. Using the resulting control bound and
\eqref{eq:linear-continuous-bound}, we obtain
\[
 \begin{aligned}
 \|h\|_2&\leq C\sqrt\tau(L+\tau^{-1})
                      e^{Cm+L\tau}(d_j+a_j),\\
 \|v\|_{C L^2}&\leq C\tau(L+\tau^{-1})
                      e^{Cm+L\tau}(d_j+a_j).
 \end{aligned}
\]
Since $y=r+v$, we obtain, after absorbing these polynomial factors,
\begin{equation}\label{eq:uniform-segment-cost}
 \|h\|_2+\|y\|_{C L^2}
 \leq C e^{C(\tau^{-1}+L^{2/3})+L\tau}(d_j+a_j).
\end{equation}
We now let $T>0$ be arbitrary. To remove the factor $e^{LT}$, set
\[
 J=\max\{1,\lceil T(1+L)^{1/3}\rceil\},\qquad \tau=T/J.
\]
Then $0<\tau\leq1$, $\tau^{-1}\leq T^{-1}+(1+L)^{1/3}$ and
$L\tau\leq(1+L)^{2/3}$. Hence the prefactor in each segment estimate
is bounded by the same constant $A=C e^{C(T^{-1}+L^{2/3})}$.
If $J=1$, apply \eqref{eq:uniform-segment-cost} directly.
If $J\geq2$, join $y_0$ to zero on the first interval, zero
to zero on all intervening intervals, and zero to $y_1$ on
the last interval. The single-interval construction supplies the
controls on the intervening intervals even when $f(0)\ne0$.
The corresponding trajectories have zero endpoint states, although
they need not vanish between the endpoints.

On each segment, the forcing belongs to $L^1_tH$ and the
control belongs to $L^2_tL^2(\omega)$. The states agree at every junction.
Thus, by repeated use of the group identity $U(t-r)U(r-q)=U(t-q)$,
we obtain the mild equation on the whole interval, with no distributional
jump term. The global control norm is the Euclidean norm
of the segment control norms. Since $\sum_j a_j=Tc_f$ and
the first and last $d_j$ are the only ones that can be nonzero,
the triangle inequality yields
\[
 \|h\|_2\leq A\Bigl(\sum_j(d_j+a_j)^2\Bigr)^{1/2}
 \leq A(\|y_0\|_2+\|y_1\|_2+Tc_f).
\]
The supremum norm of the state is bounded by $A\max_j(d_j+a_j)$,
which satisfies the same final bound. Increasing $C$ yields
\eqref{eq:unified-lipschitz-cost} for every $T>0$.
For a prescribed control, the global Lipschitz bound and Gronwall's
inequality give uniqueness of the mild solution in $C([0,T];H)$.
\end{proof}

\begin{remark}
No conservation law is required in
Theorem~\ref{thm:main}. If $f$ is
mass preserving, its uncontrolled trajectories have constant norm.
The single-interval estimate then avoids $e^{L\tau}$.
For $T\leq1$, it gives the uniform cost directly.
For $T>1$, we follow the uncontrolled flow until $T-1$ and
apply the single-interval estimate on the last unit interval.
The norm is unchanged during the uncontrolled evolution, so
the same uniform bound holds without concatenating several
controlled intervals. For continuous scalar $f$,
$\operatorname{Im}(f(z)\bar z)=0$ for every $z$ also implies
$f(0)=0$, as seen by testing $z=r$ and $z=ir$ and letting $r\to0$.
\end{remark}

For the logarithmic truncations, we need a control estimate for
a nonlinear difference along a prescribed reference function.

\begin{lemma}\label{lem:shifted-control}
Assume one of the geometric conditions in Theorem~\ref{thm:main},
and let $f:\C\to\C$ satisfy Assumption~\ref{ass:lip} with
Lipschitz constant $L$. Fix $T>0$ and a measurable function
$u:Q\to\C$ that is finite almost everywhere.
For every $z_0,z_1\in H$, there is a control
$h\in L^2(Q_\omega;\C)$ such that the Dirichlet problem
\[
 Pz=f(u+z)-f(u)+\one_\omega h,\qquad z(0)=z_0,
\]
has a unique mild solution $z\in C([0,T];H)$ satisfying
$z(T)=z_1$. For every $T>0$, the control and solution can be
chosen so that
\begin{equation}\label{eq:shifted-lipschitz-cost}
 \|h\|_{L^2(Q_\omega)}+\|z\|_{C([0,T];H)}
 \leq C e^{C(T^{-1}+L^{2/3})}
              \bigl(\|z_0\|_H+\|z_1\|_H\bigr).
\end{equation}
Here $C$ depends only on the fixed spatial geometry, the control
region and, where applicable, the phase data. In particular, it is
independent of $u$, $f(0)$, $T$, $L$ and the endpoint states.
\end{lemma}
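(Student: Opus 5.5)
The plan is to treat the shifted map $g_u(t,x,z)=f(u(t,x)+z)-f(u(t,x))$ as a time- and space-dependent Nemytskii nonlinearity. It has two structural features that replace the role of $f$ in Section~\ref{sec:controllability}. First, it is Lipschitz in $z$ with constant $L$, uniformly in $(t,x)$. Second, it vanishes at $z=0$. Measurability of $g_u(\cdot,\cdot,z(\cdot,\cdot))$ for measurable $z$ follows from continuity of $f$ and measurability of $u$; finiteness a.e.\ of $u$ makes the expression well defined. Consequently $\|g_u(\cdot,z)-g_u(\cdot,w)\|_H\le L\|z-w\|_H$ pointwise in time, and $\|g_u(\cdot,z)\|_H\le L\|z\|_H$. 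This is exactly \eqref{eq:nemytskii-bounds} with $c_f=0$, and no integrability of $u$ itself is required. With these bounds, the mild well-posedness (forward and backward, by contraction on short intervals) and the growth bound \eqref{eq:flow-growth} hold for the equation $Pz=g_u(z)$. That bound reads $\|z(t)\|_H\le e^{L|t-t_0|}\|z(t_0)\|_H$. Uniqueness for a fixed control then follows by Gronwall.

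Next I would rerun the bridge construction of Lemma~\ref{lem:nonlinear-bridge} verbatim, with $f$ replaced by $g_u$. Take a forward trajectory $u^0$ from $z_0$, a backward trajectory $u^1$ ending at $z_1$, and set $r=\eta u^0+(1-\eta)u^1$. The residual
\[
 R=g_u(r)-\eta g_u(u^0)-(1-\eta)g_u(u^1)-i\eta'(u^0-u^1)
\]
vanishes outside $[T/3,2T/3]$. The pointwise convexity estimate bounding the first three terms by $\tfrac L2|u^0-u^1|$ uses only the Lipschitz property in $z$ at each fixed $(t,x)$, so it survives the explicit $(t,x)$-dependence. The fixed-point map $\mathcal N_s(F)=R+g_u(r+v_F)-g_u(r)$ is then a contraction on $\Ss$ with the same rate $q_s=L\sqrt{C_{\rm Car}}m_s^{-3/2}$. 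This is because the linear operator $\Lambda_s$ of Proposition~\ref{thm:linear} depends only on the Carleman estimate \eqref{eq:carleman-hum-form}, not on the nonlinearity.

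For the cost, I would follow the proof of the bound in Section~\ref{subsec:uniform-cost} with $c_f=0$. On a segment of length $\tau\le1$, choose $m\simeq\tau^{-1}+L^{2/3}$ via the uniform threshold of Proposition~\ref{prop:uniform-short-time-carleman}. Using \eqref{eq:Bs-cost-bound} and \eqref{eq:weighted-R-cost-bound}, this gives \eqref{eq:uniform-segment-cost} with $a_j=0$. Then subdivide $[0,T]$ into $J=\max\{1,\lceil T(1+L)^{1/3}\rceil\}$ pieces. The plan is to steer $z_0\to0$ on the first piece, keep $0$ on the middle pieces, and steer $0\to z_1$ on the last piece. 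The main point to check is that the middle pieces need no control: since $g_u(0)=0$, the function $z\equiv0$ with $h=0$ is the mild solution there. The first and last pieces use the single-segment construction applied to the restriction of $u$ to those time intervals; the constants do not depend on $u$. Gluing the mild formulations by the group property then yields \eqref{eq:shifted-lipschitz-cost}.

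I expect the main obstacle to be bookkeeping rather than analysis. One must confirm that every constant in Sections~\ref{sec:linear}--\ref{subsec:uniform-cost} entered only through $L$, $c_f$ and the Carleman data, so that time-translating $u$ on each segment and the arbitrary size of $u$ do not affect $C$. The measurability and mild-solution justification for the nonautonomous Nemytskii map is the only genuinely new detail; I would handle it by noting that $g_u$ is Carathéodory.
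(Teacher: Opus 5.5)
Your proposal is correct and follows essentially the same route as the paper. The paper likewise treats $f(u+\cdot)-f(u)$ as an $L$-Lipschitz forcing that vanishes at zero and needs no integrability of $u$. It reruns the bridge and contraction on segments of length $\tau\le1$ with the uniform Carleman threshold, and concatenates over $J=\max\{1,\lceil T(1+L)^{1/3}\rceil\}$ pieces with zero state and zero control on the intervening ones.
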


\begin{proof}
The pointwise inequalities
\[
 \begin{aligned}
 |f(u+v)-f(u+w)|&\leq L|v-w|,\\
 |f(u+v)-f(u)|&\leq L|v|
 \end{aligned}
\]
show that the nonlinear difference defines a strongly measurable
$H$-valued forcing along every continuous $H$-valued trajectory.
This forcing is globally Lipschitz in the trajectory and vanishes at zero.
The Duhamel contraction on short time intervals can therefore be
iterated to give unique forward and backward uncontrolled trajectories.
No integrability of $u$ is needed.

As in the preceding proof, we first work on an interval of length
$0<\tau\leq1$ and translate its left endpoint to zero.
For the prescribed endpoints, set
$d=\|z_0\|_H+\|z_1\|_H$.
Let $v^0$ and $v^1$ be the uncontrolled trajectories starting
from $z_0$ and ending at $z_1$, respectively. By Gronwall's inequality,
\[
 \|v^0\|_{C H}+\|v^1\|_{C H}\leq e^{L\tau}d.
\]
Choose the cutoff $\eta$ as in \eqref{eq:bridge-cutoff}, with
$T$ replaced by $\tau$, and set $r=\eta v^0+(1-\eta)v^1$.
Its residual is
\[
 \begin{aligned}
 R={}&f(u+r)-f(u)
       -\eta\bigl[f(u+v^0)-f(u)\bigr]\\
    &-(1-\eta)\bigl[f(u+v^1)-f(u)\bigr]
       -i\eta'(v^0-v^1).
 \end{aligned}
\]
The mild equations imply
$Pr=f(u+r)-f(u)-R$. Moreover, $R$ vanishes outside
$[\tau/3,2\tau/3]$ and
\[
 \|r\|_{C H}\leq e^{L\tau}d,\qquad
 \|R\|_{L^2((0,\tau)\times\Om)}
 \leq C\sqrt\tau(L+\tau^{-1})e^{L\tau}d.
\]
Indeed, the nonlinear part of $R$ is bounded pointwise by
$2L\eta(1-\eta)|v^0-v^1|$.
The support property also gives $R\in\Ss$ for every
admissible Carleman parameter.

For $(v_F,h_F)=\Lambda_sF$, define the nonlinear correction map
\[
 \mathcal N_s(F)=R+f(u+r+v_F)-f(u+r).
\]
By \eqref{eq:weighted-embeddings} and \eqref{eq:linearbound},
its Lipschitz constant on $\Ss$ is at most
$L\sqrt{C_{\rm Car}}m^{-3/2}$, where $m=4s/\tau^2$.
Choose $s$ exactly as in the proof of
\eqref{eq:uniform-segment-cost}, so that this constant is at
most $1/2$ and $m\leq C(\tau^{-1}+L^{2/3})$.
The fixed point gives $z=r+v_F$ with the required endpoints.
Using the residual bound above together with
\eqref{eq:Bs-cost-bound}, \eqref{eq:weighted-R-cost-bound}
and \eqref{eq:linear-continuous-bound}, we obtain
\[
 \|h\|_2+\|z\|_{C H}
 \leq C e^{C(\tau^{-1}+L^{2/3})+L\tau}d.
\]
The nonlinear terms are estimated using only the two displayed
Lipschitz bounds. Thus the estimates are independent of $u$ and $f(0)$.

To pass to arbitrary $T>0$, set
$J=\max\{1,\lceil T(1+L)^{1/3}\rceil\}$ and $\tau=T/J$.
Then $0<\tau\leq1$, $\tau^{-1}\leq T^{-1}+(1+L)^{1/3}$ and
$L\tau\leq(1+L)^{2/3}$.
If $J\geq2$, join $z_0$ to zero on the first interval and
zero to $z_1$ on the last, taking both the solution and the
control to be zero on the intervening intervals.
The concatenated trajectories satisfy the mild equation on $[0,T]$.
Taking the Euclidean norm of the segment control norms and the
supremum of the state norms gives \eqref{eq:shifted-lipschitz-cost}.
For $J=1$, the single-interval estimate gives the same conclusion.
Uniqueness for the prescribed control follows from the
Lipschitz difference bound and Gronwall's inequality.
\end{proof}

\section{Logarithmic nonlinearities}
\label{sec:logarithmic}

Throughout this section, let $\Om=(0,\ell)$ and let $\omega$ be a
nonempty open subset of $\Om$. We first assume that
Assumption~\ref{ass:radial-log-class} holds and write
\[
 f(z)=F(z)+\mathfrak g(z),\qquad F(z)=z\phi(|z|),\quad F(0)=0,
 \qquad \operatorname{Lip}(\mathfrak g)\leq L_g.
\]
The exponent $0\leq\gamma<1/2$ controls the derivative of the
radial coefficient with respect to logarithmic amplitude. The
critical case $\gamma=1/2$ is treated separately in
Subsection~\ref{subsec:critical-log}. Neither $f(0)=0$ nor a
conservation law is assumed for the full nonlinearity.
To reach an arbitrary terminal state, we first construct an
uncontrolled reference trajectory ending at that state. We then
successively control the difference from this reference. At each
step, we compare the true trajectory with the truncated controlled
trajectory through a common truncation.

\subsection{Truncation and the forced equation}

For the Cauchy theory of the logarithmic Schr\"odinger equation, see
\cite{CazenaveHaraux1980,CHO2024,HayashiOzawa2025}.
The truncation and well-posedness lemmas in this subsection
remain valid for $0\leq\gamma<1$ under the same derivative
bound. In particular, they apply at $\gamma=1/2$.
The restriction $\gamma<1/2$ is needed for the control
construction in the next subsection.

The stability estimate below extends the imaginary-part
cancellation for the canonical logarithmic nonlinearity; see,
for example, \cite[Lemma~2.1 and Section~2.2]{CarlesGallagher2018}.
We keep the dependence on the truncation level explicit for its
later use in the control construction.

\begin{lemma}\label{lem:log-truncation}
For $K\geq1$, we define
\[
 \rho_K(r)=\min\{e^K,\max\{e^{-K},r\}\},\qquad
 \phi_K(r)=\phi(\rho_K(r)),\qquad
 f_K(z)=z\phi_K(|z|)+\mathfrak g(z).
\]
There is a constant $C$, independent of $K$, such that
\begin{align}
 \operatorname{Lip}(f_K)&\leq C(1+K)^{1+\gamma},
                         \label{eq:log-lip-truncation}\\
 |f(z)-f_K(z)|&\leq C(1+K)^\gamma e^{-K}(1+|z|^2),
                         \label{eq:log-tail-error}\\
 |f(z)|+|f_K(z)|&\leq C(1+|z|^{3/4}+|z|^{5/4}).
                         \label{eq:log-subpower-growth}
\end{align}
Moreover, with $\Gamma_K=L_g+D(1+K)^\gamma$,
\begin{equation}\label{eq:log-one-sided}
 \left|\operatorname{Im}\bigl((f_K(z)-f_K(w))
                                      \overline{z-w}\bigr)\right|
 \leq\Gamma_K|z-w|^2.
\end{equation}
Let $Pu=f_K(u)+H$ and $Pv=f_K(v)+J$ be Dirichlet mild
solutions with integrable $L^2$ right-hand sides. Then
\begin{equation}\label{eq:log-forced-stability}
 \|u(t)-v(t)\|_2\leq e^{\Gamma_K t}
 \left(\|u(0)-v(0)\|_2+
              \int_0^t\|H(r)-J(r)\|_2\,\dd r\right).
\end{equation}
Every mild solution of $Pu=f_K(u)+H$, and every mild solution
with $f$ in place of $f_K$ and integrable nonlinearity, satisfies
\begin{equation}\label{eq:log-mass}
 \|u(t)\|_2\leq e^{L_g t}
 \left(\|u(0)\|_2+t\ell^{1/2}|\mathfrak g(0)|
                       +\int_0^t\|H(r)\|_2\,\dd r\right).
\end{equation}
If $\gamma=0$, \eqref{eq:log-one-sided} and
\eqref{eq:log-forced-stability} also hold for $f$ with the
constant $L_g+D$.
\end{lemma}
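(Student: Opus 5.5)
The plan is to reduce everything to the scalar function $q\mapsto\Phi(q):=\phi(e^q)$, which satisfies $|\Phi'(q)|\le D(1+|q|)^\gamma$ for a.e.\ $q$ by \eqref{eq:log-growth-assumption}. Truncation at level $K$ replaces $\Phi$ by $\Phi\circ\mathrm{clamp}_{[-K,K]}$. Hence $\Phi_K$ is globally Lipschitz with constant $D(1+K)^\gamma$, and $|\Phi_K|\le|\phi(1)|+D\int_0^K(1+q)^\gamma\,dq\le C(1+K)^{1+\gamma}$.

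\textbf{Lipschitz bound and one-sided bound.} For the Lipschitz bound \eqref{eq:log-lip-truncation}, I would write $G_K(z)=z\phi_K(|z|)$ and differentiate in the real sense away from $0$. This gives $|DG_K(z)|\le|\phi_K(|z|)|+|z||\phi_K'(|z|)|$, and $r|\phi_K'(r)|=|\Phi_K'(\log r)|\le D(1+K)^\gamma$. Since $G_K$ is continuous at $0$ and locally Lipschitz on $\C\setminus\{0\}$, the mean value inequality along segments, perturbed off the origin, gives the bound; adding $L_g$ accounts for $\mathfrak g$.

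For the one-sided estimate \eqref{eq:log-one-sided}, the key identity is the Cazenave--Haraux/Carles--Gallagher cancellation. Because $\phi_K$ is real,
\[
\operatorname{Im}\bigl((z\phi_K(|z|)-w\phi_K(|w|))\overline{(z-w)}\bigr)
=(\phi_K(|z|)-\phi_K(|w|))\operatorname{Im}(w\bar z).
\]
Indeed, the diagonal terms $|z|^2\phi_K(|z|)$ and $|w|^2\phi_K(|w|)$ are real, and the cross terms give $-\phi_K(|z|)\operatorname{Im}(z\bar w)-\phi_K(|w|)\operatorname{Im}(w\bar z)$. Next I bound $|\phi_K(|z|)-\phi_K(|w|)|\le D(1+K)^\gamma|\log|z|-\log|w||$. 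I also use $|\operatorname{Im}(w\bar z)|\le|z||w|\,|\sin\angle|$ together with the elementary inequality
\[
|z||w|\,|\log|z|-\log|w||\le\tfrac12(|z|+|w|)\,\bigl||z|-|w|\bigr|\le |z-w|\cdot\tfrac12(|z|+|w|),
\]
which uses $\log a-\log b\le(a-b)(a+b)/(2ab)$.

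This alone loses the angle, so the cleaner route is the following. Write $|z||w||\sin\angle|\le \min(|z|,|w|)|z-w|$ and $\bigl|\log(|z|/|w|)\bigr|\min(|z|,|w|)\le\bigl||z|-|w|\bigr|\le|z-w|$, using $\log x\le x-1$. Combining these yields $D(1+K)^\gamma|z-w|^2$. The term from $\mathfrak g$ is at most $L_g|z-w|^2$. For $\gamma=0$ the same computation works without truncation, with constant $L_g+D$.

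\textbf{Tail, growth, stability and mass.} For the tail estimate \eqref{eq:log-tail-error}, $f-f_K$ vanishes when $e^{-K}\le|z|\le e^K$. For $|z|<e^{-K}$ we have $|z||\phi(|z|)-\phi(e^{-K})|\le|z|\cdot C(1+|\log|z||)^{1+\gamma}$. I would bound this by $C(1+K)^\gamma e^{-K}$ using monotonicity of $r(1+|\log r|)^{1+\gamma}$ on small $r$. This requires care: the extra log factor must be absorbed. I expect this bookkeeping to be the main obstacle, and I would handle it with the sharper bound $\int_{\log|z|}^{-K}(1+|q|)^\gamma\,dq\cdot e^{\log|z|}$ and maximize over $\log|z|\le -K$, which gives $\lesssim(1+K)^\gamma e^{-K}$. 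For $|z|>e^K$ the analogous integral gives $|z|(1+\log|z|)^{1+\gamma}\le C(1+K)^\gamma e^{-K}|z|^2$, since $\log^{1+\gamma}r/r\lesssim(1+K)^{\gamma}e^{-K}$ for $r\ge e^K$ after a similar maximization. The growth bound \eqref{eq:log-subpower-growth} follows from $|\phi(r)|\le C(1+|\log r|)^{1+\gamma}\le C(r^{-1/4}+r^{1/4})$.

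The stability estimate \eqref{eq:log-forced-stability} is obtained by a formal energy identity for $u-v$, namely $\frac{d}{dt}\frac12\|u-v\|^2=\operatorname{Im}\int(f_K(u)-f_K(v)+H-J)\overline{(u-v)}$, which I justify by regularizing the data and forcing in $D(A)$ and using the Lipschitz continuity of $f_K$. Then \eqref{eq:log-one-sided} and Gronwall give the estimate. The mass bound \eqref{eq:log-mass} is the same argument with $v=0$: since $\operatorname{Im}(F(z)\bar z)=0$, only $\mathfrak g$ contributes $\le L_g|u|^2+|\mathfrak g(0)||u|$. For $f$ itself I would approximate via $f_K$, using integrability of $f(u)$.
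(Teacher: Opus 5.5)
Your overall strategy is the paper's. You reduce to $\Phi(q)=\phi(e^q)$. For the one-sided bound you use the cancellation $\operatorname{Im}\bigl((z\varphi(|z|)-w\varphi(|w|))\overline{z-w}\bigr)=(\varphi(|z|)-\varphi(|w|))\operatorname{Im}(w\bar z)$, together with $|\operatorname{Im}(w\bar z)|\le\min\{|z|,|w|\}\,|z-w|$ and $\log x\le x-1$. For the tails you integrate $\Phi'$ from the truncation level. The stability and mass bounds follow from the $L^2$ energy identity and Gronwall. The Lipschitz, one-sided, lower-tail and stability steps are fine.

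\textbf{Upper tail.} This step fails as written. The quantity $|z|(1+\log|z|)^{1+\gamma}$ is the crude bound coming from $|\phi(|z|)|+|\phi(e^K)|$, not from integrating $\Phi'$ over $[K,\log|z|]$. The claimed inequality $(\log r)^{1+\gamma}/r\lesssim(1+K)^\gamma e^{-K}$ for $r\ge e^K$ is false: at $r=e^K$ the left side equals $K^{1+\gamma}e^{-K}$. You therefore lose exactly the factor $1+K$ that you correctly flagged in the lower tail. The repair is the device you already used there. Write $|z|=e^{K+b}$ with $b\ge0$; then
\[
 |z|\,|\phi(|z|)-\phi(e^K)|
 \le D|z|\int_K^{K+b}(1+q)^\gamma\dd q
 \le D(1+K)^\gamma e^{-K}|z|^2\cdot b(1+b)^\gamma e^{-b},
\]
and $\sup_{b\ge0}b(1+b)^\gamma e^{-b}<\infty$.

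\textbf{Mass bound for $f$.} In \eqref{eq:log-mass} with $f$ itself, the detour through $f_K$ is unnecessary and, as stated, unjustified. Sending $K\to\infty$ would require $\|f(u)-f_K(u)\|_{L^1L^2}\to0$. On $\{|u|>e^K\}$ the term $u\,\phi(e^K)$ is not dominated by $|f(u)|$ for a general real $\phi$. So integrability of $f(u)$ alone does not give dominated convergence. Instead, apply the energy identity directly to the mild solution with forcing $G=f(u)+H\in L^1(0,T;H)$, for instance via absolute continuity of $U(-t)u(t)$ as in the paper. Then use that $|u|^2\phi(|u|)$ is real pointwise, so the radial term contributes nothing.

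\textbf{Minor.} Add the one-line observation $|\log\rho_K(r)|\le|\log r|$, so that \eqref{eq:log-subpower-growth} covers $f_K$ as well as $f$.
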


\begin{proof}
Let $\Phi(q)=\phi(e^q)$. By local absolute continuity,
$|\Phi'(q)|\leq D(1+|q|)^\gamma$ almost everywhere, and hence
\[
 |\phi(r)|\leq |\phi(1)|+C D(1+|\log r|)^{1+\gamma}.
\]
The truncation satisfies
$r|\phi_K'(r)|\leq D(1+K)^\gamma$ almost everywhere.
The associated radial map is linear near zero and locally
Lipschitz elsewhere. The derivative bound
$|\phi_K(r)|+r|\phi_K'(r)|\leq C(1+K)^{1+\gamma}$
proves \eqref{eq:log-lip-truncation}.

For the lower tail, write $r=e^{-K-b}$, $b\geq0$. Then
\[
 r|\phi(r)-\phi(e^{-K})|
 \leq D e^{-K-b}\int_K^{K+b}(1+q)^\gamma\,\dd q
 \leq C D(1+K)^\gamma e^{-K},
\]
because $b(1+b)^\gamma e^{-b}$ is bounded.
For the upper tail, write $r=e^{K+b}$ and apply the same bound
after dividing by $e^{-K}r^2$. This yields
\eqref{eq:log-tail-error}, since the perturbation cancels exactly.
For the growth estimate, every fixed power of
$1+|\log r|$ is dominated by $r^{-1/4}+r^{1/4}$, and the
truncation reduces the absolute logarithmic amplitude.

For a real radial coefficient $\varphi$ with
$r|\varphi'(r)|\leq B$, write $r=|z|\geq s=|w|>0$.
Then $|\varphi(r)-\varphi(s)|\leq B(r-s)/s$ and
$|\operatorname{Im}(z\bar w)|\leq s|z-w|$. The identity
\[
 \operatorname{Im}\bigl((z\varphi(r)-w\varphi(s))
                                      \overline{z-w}\bigr)
 =\bigl(\varphi(s)-\varphi(r)\bigr)\operatorname{Im}(z\bar w)
\]
gives the one-sided bound. The case of a zero amplitude follows
directly. Applying this bound to $\phi_K$ and adding the
Lipschitz bound for $\mathfrak g$, we obtain the stated estimate.
The interaction variables $U(-t)u(t)$ are absolutely continuous
in $L^2$ when the right-hand side is integrable. Applying the
energy identity to the difference, regularizing the norm at zero,
and then using Gronwall's inequality, we obtain
\eqref{eq:log-forced-stability}.

The real radial term has zero contribution to the mass identity,
while $|\mathfrak g(z)|\leq L_g|z|+|\mathfrak g(0)|$.
This yields \eqref{eq:log-mass}. When $\gamma=0$,
the original coefficient satisfies the required uniform derivative
bound, so the final assertion follows.
\end{proof}

The following periodic restriction estimate is a basic case of
the Fourier estimates in \cite{Bourgain1993}. We include a proof
based on the resonance relations.

\begin{lemma}\label{lem:interval-l4}
For every fixed $T_*>0$, there exists $B=B(\ell,T_*)$ such that
for $0<\tau\le T_*$, $v\in L^2(0,\ell)$, and
$F\in L^1(0,\tau;L^2(0,\ell))$,
\begin{equation}\label{eq:interval-l4}
 \left\|U(t)v-i\int_0^tU(t-s)F(s)\,\dd s\right\|_{L^4((0,\tau)\times(0,\ell))}
 \le B\left(\norm v_2+\norm{F}_{L^1(0,\tau;L^2)}\right).
\end{equation}
\end{lemma}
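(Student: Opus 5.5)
The plan is to reduce \eqref{eq:interval-l4} to a free $L^4$ estimate over a single time period, and then pass to the Duhamel term by Minkowski's inequality.

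\emph{Step 1: the free estimate.} Use the Dirichlet eigenbasis $e_k(x)=\sqrt{2/\ell}\sin(k\pi x/\ell)$ with $\lambda_k=(k\pi/\ell)^2$. Odd reflection identifies $U(t)v$ with the periodic Schr\"odinger evolution on $\R/2\ell\mathbb Z$ of the odd extension $\tilde v$. The $L^4$ norm over $(0,\ell)$ is half the $L^4$ norm over the circle, and $\|\tilde v\|_{L^2(\R/2\ell\mathbb Z)}^2=2\|v\|_2^2$. For a Fourier polynomial $\tilde v=\sum_n c_ne^{i\pi nx/\ell}$, the solution is $\sum_n c_n e^{i\pi nx/\ell}e^{-i(\pi n/\ell)^2t}$, where the sign of the phase is irrelevant. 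This is periodic in $t$ with period $P=2\ell^2/\pi$.

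Expand $|\cdot|^4=|(\cdot)^2|^2$ and integrate over $[0,P]\times[0,2\ell]$. Orthogonality in $x$ and $t$ leaves only the quadruples with $n_1+n_2=n_3+n_4$ and $n_1^2+n_2^2=n_3^2+n_4^2$. These relations force $\{n_1,n_2\}=\{n_3,n_4\}$: the second relation gives $(n_1-n_2)^2=(n_3-n_4)^2$, and combined with the first this pins down the unordered pair. Counting the diagonal and off-diagonal matches gives
\[
\int_0^P\!\!\int_0^{2\ell}|\cdot|^4=2\ell P\Bigl(2\bigl(\textstyle\sum|c_n|^2\bigr)^2-\sum|c_n|^4\Bigr)\le\frac{4P}{2\ell}\|\tilde v\|_2^4 .
\]
By density this bound holds for all $\tilde v\in L^2$, since the map $v\mapsto U(\cdot)v$ is continuous from $L^2$ into $L^2_{t,x}$ and Fatou's lemma applies. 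Covering $(0,\tau)\subset(0,T_*)$ by $\lceil T_*/P\rceil$ translates of a period, and using unitarity at each starting time, gives
\[
\|U(\cdot)v\|_{L^4((0,\tau)\times(0,\ell))}\le B_0\|v\|_2,
\qquad B_0=B_0(\ell,T_*).
\]

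\emph{Step 2: the Duhamel term.} Write the Duhamel term as $\int_0^\tau \one_{\{s<t\}}U(t)\bigl[U(-s)F(s)\bigr]\,\dd s$. Minkowski's integral inequality in $L^4_{t,x}$ bounds its norm by $\int_0^\tau\|\one_{\{t>s\}}U(t)U(-s)F(s)\|_{L^4}\,\dd s$. Dropping the indicator only enlarges the integrand, and then Step 1 applied to $U(-s)F(s)$ gives the bound $B_0\int_0^\tau\|F(s)\|_2\,\dd s$. Measurability of the integrand in $(s,t,x)$ is a routine check for strongly measurable $F$; it can be avoided by first taking $F$ simple and then arguing by density in $L^1_tL^2_x$. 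Adding the two bounds gives \eqref{eq:interval-l4} with $B=B_0$.

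\emph{Main obstacle.} The only step with genuine content is the resonance count. Everything else is bookkeeping: the reflection constants, the period covering, and the use of Minkowski's inequality.
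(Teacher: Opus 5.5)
Your proposal is correct and follows the paper's proof essentially step for step: odd reflection to the circle of length $2\ell$, the time period $2\ell^2/\pi$, the exact resonance identity $2\ell P\bigl[2(\sum|c_n|^2)^2-\sum|c_n|^4\bigr]$, density, covering $[0,T_*]$ by periods, and Minkowski's inequality for the Duhamel term. Your rewriting $U(t-s)F(s)=U(t)[U(-s)F(s)]$ is an equivalent way of applying the free bound for each fixed $s$. Two slips are cosmetic only: the identity $(n_1-n_2)^2=(n_3-n_4)^2$ needs both resonance relations, not the second alone, and it is the fourth power of the $L^4$ norm over $(0,\ell)$, not the norm itself, that equals half the corresponding quantity over the circle.
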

\begin{proof}
We extend $v$ oddly to the circle of length $2\ell$. The periodic
Schr\"odinger flow has time period $P_*=2\ell^2/\pi$.
For a Fourier polynomial, integrating the fourth power of the absolute
value of its evolution over one spatial and one temporal period leaves only
the resonant relations
\[
 n_1+n_2=n_3+n_4,\qquad n_1^2+n_2^2=n_3^2+n_4^2.
\]
These imply $\{n_1,n_2\}=\{n_3,n_4\}$. In fact, for
$V(x)=\sum_n c_ne^{in\pi x/\ell}$ and its periodic flow
$U_{\rm per}$, the exact identity is
\[
 \int_0^{P_*}\!\int_{-\ell}^{\ell}
       |U_{\rm per}(t)V|^4\,\dd x\,\dd t
 =2\ell P_*\left[2\left(\sum_n|c_n|^2\right)^2
                         -\sum_n|c_n|^4\right].
\]
Parseval's identity and a density argument give the free estimate
over one period. The odd extension has twice the squared $L^2$
norm of $v$, and the fourth power of its evolution is even in
space. Covering $[0,T_*]$ by finitely many periods gives a
constant independent of $\tau\le T_*$. Restricting the flow to
$(0,\ell)$ yields the Dirichlet estimate. For each fixed $s$, the
same bound applies to $\one_{\{s<t<\tau\}}U(t-s)F(s)$.
An application of Minkowski's inequality then gives the estimate
for the Duhamel term.
\end{proof}

\begin{lemma}
\label{lem:log-wellposed}
Let $T>0$, $v\in L^2(0,\ell)$ and
$H\in L^1(0,T;L^2(0,\ell))$. Then there exists a unique Dirichlet
mild solution of $Pu=f(u)+H$, $u(0)=v$, in the class
\[
 u\in C([0,T];L^2)\cap L^4((0,T)\times(0,\ell)).
\]
Its nonlinearity belongs to $L^2$ on the cylinder, and
\eqref{eq:log-mass} holds. For $0<T\leq1$, both this solution
and all its truncation approximants satisfy
\begin{equation}\label{eq:log-uniform-l4-polynomial}
 \|u\|_{L^4}\leq C(1+M^{3/2}),\qquad
 M=e^{L_g T}\bigl(\|v\|_2+T\ell^{1/2}|\mathfrak g(0)|
                                      +\|H\|_{L^1L^2}\bigr),
\end{equation}
where $C$ depends only on the fixed interval and nonlinearity
parameters. For $\gamma=0$, uniqueness also holds among
$C([0,T];L^2)$ mild solutions with nonlinearity in $L^1L^2$.
\end{lemma}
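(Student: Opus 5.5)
Existence comes from the truncations $f_K$ of Lemma~\ref{lem:log-truncation} plus a compactness-free limit, and uniqueness from the one-sided bound \eqref{eq:log-one-sided} combined with the $L^4$ control. First I treat $0<T\le1$ and then iterate, since \eqref{eq:log-mass} is global.

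\emph{A priori bounds.} For each $K\ge1$, $f_K$ is globally Lipschitz, so $Pu_K=f_K(u_K)+H$, $u_K(0)=v$, has a unique mild solution $u_K\in C([0,T];L^2)$. By \eqref{eq:log-mass} (the real radial part drops out of the mass identity), $\|u_K\|_{C L^2}\le M$ uniformly in $K$. Lemma~\ref{lem:interval-l4}, applied to the Duhamel formula with forcing $f_K(u_K)+H$, gives
\[
\|u_K\|_{L^4}\le B\bigl(\|v\|_2+\|H\|_{L^1L^2}+\|f_K(u_K)\|_{L^1L^2}\bigr).
\]
By \eqref{eq:log-subpower-growth}, $\|f_K(u_K)\|_{L^2_x}\le C(1+\|u_K\|_{L^{3/2}}^{3/4}+\|u_K\|_{L^{5/2}}^{5/4})$. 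I bound the $L^{5/2}_x$ norm by interpolation between $L^2$ and $L^4$; the exponents work out to $|u|^{5/4}$ in $L^2_x$, i.e.\ $\|u\|_{L^{5/2}}^{5/4}\le \|u\|_2^{3/4}\|u\|_4^{1/2}$. Integrating in time with $T\le1$ and using Hölder gives
\[
\|f_K(u_K)\|_{L^1L^2}\le C\bigl(1+M^{3/4}+M^{3/4}\|u_K\|_{L^4}^{1/2}\bigr).
\]
Since the $L^4$ power here is $1/2<1$, absorption yields $\|u_K\|_{L^4}\le C(1+M^{3/2})$, which is \eqref{eq:log-uniform-l4-polynomial}. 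This absorption is legitimate because $u_K$ is a priori in $L^4$ by the Lipschitz theory and Lemma~\ref{lem:interval-l4}. The same bound also gives $f_K(u_K)$ bounded in $L^2(Q)$, since $|f_K(z)|^2\lesssim 1+|z|^{4}$ is controlled by the $L^4$ bound.

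\emph{Cauchy property.} This is the main obstacle, because the stability constant $\Gamma_K$ in \eqref{eq:log-forced-stability} grows with $K$. For $K'\ge K$ I write
\[
Pu_{K'}=f_K(u_{K'})+H+\bigl(f_{K'}-f_K\bigr)(u_{K'}).
\]
Then \eqref{eq:log-forced-stability} with constant $\Gamma_K$ and \eqref{eq:log-tail-error} give
\[
\|u_K-u_{K'}\|_{CL^2}\le e^{\Gamma_K T}\,C(1+K)^\gamma e^{-K}\int_0^T\bigl\|1+|u_{K'}|^2\bigr\|_2\,dt.
\]
The time integral is $\lesssim 1+\|u_{K'}\|_{L^4}^2$, which is uniformly bounded. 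Since $\Gamma_K=L_g+D(1+K)^\gamma$ with $\gamma<1$, $e^{D(1+K)^\gamma T}e^{-K}\to0$. (This is why the lemma holds for $\gamma<1$.) Hence $u_K$ converges in $C([0,T];L^2)$ to some $u$, and by Fatou $u$ satisfies the same $L^4$ bound. Next, $f_K(u_K)\to f(u)$ in $L^1L^2$: I split off the tail via \eqref{eq:log-tail-error}, and use the local Lipschitz/continuity of $f$ with the $|z|^{5/4}$ domination and uniform integrability from the $L^4$ bound (Vitali). Passing to the limit in the mild formula shows $u$ is a mild solution, and $f(u)\in L^2(Q)$ follows. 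The bound \eqref{eq:log-mass} for $u$ follows by the limit, or directly.

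\emph{Uniqueness.} Given two solutions $u,w$ in the class, both solve $Pu=f_K(u)+H+(f-f_K)(u)$, and likewise for $w$. Then \eqref{eq:log-forced-stability} gives
\[
\|u-w\|_{CL^2}\le e^{\Gamma_K T}C(1+K)^\gamma e^{-K}\bigl(2+\|u\|_{L^4}^2+\|w\|_{L^4}^2\bigr)\to0
\]
as $K\to\infty$. When $\gamma=0$, the final assertion of Lemma~\ref{lem:log-truncation} gives \eqref{eq:log-forced-stability} for $f$ itself with constant $L_g+D$. This yields uniqueness in $C L^2$ with $f(u)\in L^1L^2$, with no $L^4$ needed, after regularizing the energy identity for differences as in that lemma.

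For general $T$, I cover $[0,T]$ by unit intervals, using \eqref{eq:log-mass} to bound the restart data. Uniqueness on each piece glues the pieces together.
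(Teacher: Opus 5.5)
Your proposal is correct and follows essentially the same route as the paper. The steps match: $K$-uniform mass and $L^4$ bounds for the truncated solutions via Lemma~\ref{lem:interval-l4}, with absorption of the $\|u_K\|_{L^4}^{1/2}$ term; a Cauchy estimate from rewriting both equations with the common truncation $f_K$, so that only the sublinear factor $e^{T\Gamma_K}$ competes with $e^{-K}$; Vitali to identify the nonlinearity; and the same comparison for uniqueness. The differences are cosmetic. The unit-interval gluing is unnecessary, since the same Young step already gives a $K$-uniform $L^4$ bound for any fixed $T$. The bound for $f_{K'}-f_K$ follows from \eqref{eq:log-tail-error} by the triangle inequality and the monotonicity of $(1+K)^\gamma e^{-K}$. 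At the origin you should invoke continuity of $f$ rather than local Lipschitz regularity, which fails there.
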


\begin{proof}
Let $u_K$ solve the globally Lipschitz equation with $f_K$.
For each $K$, its nonlinearity belongs to $L^1(0,T;L^2)$.
Lemma~\ref{lem:interval-l4} therefore gives $u_K\in L^4$.
The mass bound is uniform in $K$. Set $Z_K=\|u_K\|_{L^4}$.
Combining Lemma~\ref{lem:interval-l4} with the growth bound
and interpolation, we obtain, with constants independent of $K$,
\[
 Z_K\leq B\bigl[M+CT\ell^{1/2}
       +CT\ell^{1/8}M^{3/4}
       +CT^{7/8}M^{3/4}Z_K^{1/2}\bigr].
\]
Indeed, the inequality $\||u_K(t)|^{5/4}\|_2
\leq M^{3/4}\|u_K(t)\|_4^{1/2}$, followed by H\"older's inequality
in time, yields the last term. Young's inequality then gives a
uniform finite $L^4$ bound for fixed $T$, as well as
\eqref{eq:log-uniform-l4-polynomial} for $T\leq1$.

To compare two approximants with $J\geq K$, we rewrite both
equations with $f_K$ and treat $f_J(u_J)-f_K(u_J)$ as an
additional source in the equation for $u_J$.
The tail estimate gives
\[
 |f_J(z)-f_K(z)|
 \leq C(1+K)^\gamma e^{-K}(1+|z|^2),\qquad J\geq K,
\]
since $(1+K)^\gamma e^{-K}$ is decreasing for $K\geq1$.
Equations \eqref{eq:log-tail-error} and
\eqref{eq:log-forced-stability} imply
\[
 \|u_K-u_J\|_{C L^2}
 \leq C(1+K)^\gamma e^{-K+T\Gamma_K}
                       (T\ell^{1/2}+\sqrt T\,\|u_J\|_{L^4}^2).
\]
The factor on the right tends to zero because $\gamma<1$,
so the approximants converge in $C([0,T];L^2)$ to $u$.
Passing to a subsequence gives almost everywhere convergence,
and Fatou's lemma yields $u\in L^4$. To identify the limiting
nonlinearity, consider first a point where $u\ne0$. The
approximants eventually lie in a compact annulus on which the
radial truncation is inactive. At a point where $u=0$, write
$f_K(u_K)=F_K(u_K)+\mathfrak g(u_K)$, where
$F_K(z)=z\phi_K(|z|)$. The bound
$|F_K(z)|\leq C(|z|^{3/4}+|z|^{5/4})$, without a constant
term, gives $F_K(u_K)\to0$, while continuity gives
$\mathfrak g(u_K)\to\mathfrak g(0)=f(0)$.
Thus $f_K(u_K)\to f(u)$ almost everywhere.
For every measurable subset $E\subset(0,T)\times(0,\ell)$,
\[
 \int_E|f_K(u_K)|^2\,\dd x\,\dd t
 \leq C\bigl(|E|+|E|^{5/8}+|E|^{3/8}\bigr),
\]
by the uniform $L^4$ bound. Fatou's lemma and Vitali's theorem
therefore give $f(u)\in L^2$ and $f_K(u_K)\to f(u)$ strongly in
$L^2$. The Duhamel integrals converge in $C L^2$. Passing to the
limit in the mild formulation proves existence.

The same comparison gives uniqueness. Let $u,v$ belong to
$C L^2\cap L^4$ and solve the equation with the same initial
state and source. By \eqref{eq:log-subpower-growth}, their
nonlinearities belong to $L^2(Q)$. Rewriting both equations
with $f_K$, we obtain
\[
 \|u-v\|_{C L^2}
 \leq C(1+K)^\gamma e^{-K+T\Gamma_K}
       (T\ell^{1/2}+\sqrt T(\|u\|_{L^4}^2+\|v\|_{L^4}^2)).
\]
Letting $K\to\infty$, we obtain uniqueness without requiring a
stability constant uniform in $K$.
For $\gamma=0$, the last assertion follows from the one-sided
bound for the untruncated equation in Lemma~\ref{lem:log-truncation}.
\end{proof}

\subsection{Subcritical control near a reference trajectory}

We now apply the forced well-posedness theory to a truncated
control problem. The following lemma estimates the control cost
and the terminal error relative to the reference trajectory.

\begin{lemma}
\label{lem:log-short-control}
Let $u\in C([0,T];L^2)\cap L^4(Q)$ solve $Pu=f(u)$.
There exist $C>0$ and $B\geq1$ such that, for every interval
$I=[t_*,t_*+\tau]\subset[0,T]$ with $0<\tau\leq1$, every
$K\geq1$ and every $v\in L^2$, one can choose a control $h_K$
with the following properties. Set
\[
 q=\frac{2(1+\gamma)}3<1,\qquad
 E_K(\tau)=C(\tau^{-1}+K^q),\qquad M=\|v\|_2.
\]
The solution of the shifted truncated problem
\[
 Pz_K=f_K(u+z_K)-f_K(u)+\one_\omega h_K,
 \qquad z_K(t_*)=v,
\]
satisfies $z_K(t_*+\tau)=0$ and
\begin{align}
 \|h_K\|_{L^2(I\times\omega)}&\leq e^{E_K(\tau)}M,
                         \label{eq:log-truncated-cost}\\
 \|z_K\|_{C(I;L^2)}+\|z_K\|_{L^4(I\times\Om)}
                  &\leq e^{E_K(\tau)}M.
                         \label{eq:log-truncated-l4}
\end{align}
For the solution $Py_K=f(y_K)+\one_\omega h_K$ of the
untruncated equation with $y_K(t_*)=u(t_*)+v$,
\begin{equation}\label{eq:log-terminal-error}
 \|y_K(t_*+\tau)-u(t_*+\tau)\|_2
 \leq e^{-K+E_K(\tau)}(B+M^3).
\end{equation}
The constant $C$ depends only on the fixed interval, control
region and nonlinearity parameters. One may choose
\begin{equation}\label{eq:reference-constant}
 B=C\bigl(1+\|u\|_{C([0,T];L^2)}^3+\|u\|_{L^4(Q)}^2\bigr),
\end{equation}
after increasing $C$ to ensure $B\geq1$.
\end{lemma}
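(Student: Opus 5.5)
The plan is to apply the shifted Lipschitz control estimate, Lemma~\ref{lem:shifted-control}, to the truncated nonlinearity $f_K$ along the reference $u$. Then compare the truncated controlled trajectory with the true one through a common truncation, using Lemma~\ref{lem:log-truncation} and the $L^4$ estimate of Lemma~\ref{lem:interval-l4}.

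\emph{Step 1: truncated control.} By \eqref{eq:log-lip-truncation}, $f_K$ is globally Lipschitz with $L_K\le C(1+K)^{1+\gamma}$, so $L_K^{2/3}\le CK^{q}$ for $K\ge1$. Translating $I$ to $[0,\tau]$, apply Lemma~\ref{lem:shifted-control} with reference $u$, endpoints $z_0=v$, $z_1=0$. Its constant is independent of $u$ and $f_K(0)$, so we obtain $h_K$ and $z_K$ with $z_K(t_*+\tau)=0$ and
\[
\|h_K\|_{L^2}+\|z_K\|_{C(I;L^2)}\le Ce^{C(\tau^{-1}+K^q)}M .
\]
The $L^4$ bound in \eqref{eq:log-truncated-l4} comes from Lemma~\ref{lem:interval-l4} with $T_*=1$, applied to the mild formula for $z_K$. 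The forcing is $f_K(u+z_K)-f_K(u)+\one_\omega h_K$, whose $L^1L^2$ norm is at most $\sqrt\tau\,(L_K\|z_K\|_{CL^2}+\|h_K\|_2)$. The polynomial factor $L_K$ and the constants $C$ are absorbed into $e^{E_K(\tau)}$ after enlarging $C$ in $E_K$; note $K^{1+\gamma}\le C e^{K^q}$.

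\emph{Step 2: comparison with the untruncated equation.} Set $w_K=u+z_K$. Then $Pw_K=f_K(w_K)+\one_\omega h_K+\bigl(f(u)-f_K(u)\bigr)$. The true solution $y_K$, which exists uniquely by Lemma~\ref{lem:log-wellposed}, satisfies $Py_K=f_K(y_K)+\one_\omega h_K+\bigl(f(y_K)-f_K(y_K)\bigr)$, with the same initial value $u(t_*)+v$. Applying the forced stability estimate \eqref{eq:log-forced-stability} on $I$ gives
\[
\|y_K(t_*+\tau)-u(t_*+\tau)\|_2\le e^{\Gamma_K\tau}\int_I\bigl(\|(f-f_K)(u)\|_2+\|(f-f_K)(y_K)\|_2\bigr)\,dr .
\]
By \eqref{eq:log-tail-error}, each integrand is at most $C(1+K)^\gamma e^{-K}(\ell^{1/2}+\|\cdot\|_{L^4_x}^2)$. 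Integrating in time with Cauchy--Schwarz gives a bound by $C(1+K)^\gamma e^{-K}\bigl(1+\|u\|_{L^4}^2+\|y_K\|_{L^4(I\times\Om)}^2\bigr)$. The factors $e^{\Gamma_K\tau}\le e^{C(1+K)^\gamma}$ and $(1+K)^\gamma$ are absorbed into $e^{E_K}$, since $\gamma\le q$.

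\emph{Step 3: the main obstacle, bounding $\|y_K\|_{L^4}$.} We need this bound independently of $K$ except through $e^{E_K}$. Use \eqref{eq:log-uniform-l4-polynomial} on $I$ (with $\tau\le1$), with forcing $\one_\omega h_K$. This gives $\|y_K\|_{L^4}\le C(1+\widetilde M^{3/2})$, where $\widetilde M\le C\bigl(\|u\|_{CL^2}+M+\|h_K\|_{L^1L^2}\bigr)$. Since $\|h_K\|_{L^1L^2}\le e^{E_K}M$, we get $\|y_K\|_{L^4}^2\le C\bigl(1+\|u\|_{CL^2}^3+e^{3E_K}M^3\bigr)$. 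Collecting terms gives $e^{-K+CE_K}\bigl(1+\|u\|_{CL^2}^3+\|u\|_{L^4}^2+M^3\bigr)$. Renaming $C$ in $E_K$ yields \eqref{eq:log-terminal-error} with $B$ as in \eqref{eq:reference-constant}. The one delicate point is that the exponent $3E_K$ must be absorbed by redefining the constant in $E_K$. This is legitimate because $E_K$ is linear in $C$, but the same enlarged $C$ must then be used consistently in \eqref{eq:log-truncated-cost}--\eqref{eq:log-truncated-l4}, which only weakens them.
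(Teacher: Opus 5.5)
Your proposal is correct and follows the paper's proof essentially step by step. You first apply Lemma~\ref{lem:shifted-control} to $f_K$ and use the interval $L^4$ bound for $z_K$. You then compare $y_K$ with $u+z_K$ through the common truncation $f_K$ via \eqref{eq:log-forced-stability} and \eqref{eq:log-tail-error}, control $\|y_K\|_{L^4}^2$ by the mass and uniform $L^4$ bounds of Lemma~\ref{lem:log-wellposed}, and absorb all polynomial and sublinear-exponential factors into $e^{E_K(\tau)}$. The only implicit step worth writing out is that the stability estimate bounds $y_K-(u+z_K)$, which coincides with $y_K-u$ at $t_*+\tau$ because $z_K(t_*+\tau)=0$.
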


\begin{proof}
The function $f_K$ is globally Lipschitz with constant at most
$C(1+K)^{1+\gamma}$. Applying Lemma~\ref{lem:shifted-control}
to $f_K(u+z)-f_K(u)$ on $I$ after a time translation, we obtain
a null control and the $C L^2$ estimate with exponent
$C(\tau^{-1}+K^q)$. To estimate the space--time norm, we use
the interval $L^4$ bound and obtain
\[
 \|z_K\|_{L^4}
 \leq B_\ell\bigl(M+\tau\operatorname{Lip}(f_K)
                      \|z_K\|_{C L^2}+\sqrt\tau\|h_K\|_2\bigr).
\]
Absorbing the polynomial factor into the exponential, we obtain
\eqref{eq:log-truncated-cost}--\eqref{eq:log-truncated-l4}
after enlarging $C$; this constant remains independent of the
reference.

Lemma~\ref{lem:log-wellposed} gives the solution of the
untruncated equation with the same control. Combining its mass
and $L^4$ estimates, we obtain
\[
 \|y_K\|_{L^4(I\times\Om)}^2
 \leq C\left[1+
   \bigl(\|u(t_*)\|_2+M+\sqrt\tau\|h_K\|_2\bigr)^3\right].
\]
Here the constants absorb the fixed quantities $L_g$ and
$\ell^{1/2}|\mathfrak g(0)|$, since $\tau\leq1$.
It remains to compare $y_K$ with $u+z_K$. Writing both
equations with $f_K$ leaves a difference between their source
terms given by
\[
 [f(y_K)-f_K(y_K)]-[f(u)-f_K(u)].
\]
Their initial states agree. It therefore follows from
\eqref{eq:log-forced-stability} and the tail bound that
\[
 \|y_K-u-z_K\|_{C(I;L^2)}
 \leq C(1+K)^\gamma e^{-K+\tau\Gamma_K}
  \left[\tau\ell^{1/2}+\sqrt\tau
        (\|y_K\|_{L^4(I\times\Om)}^2+\|u\|_{L^4(I\times\Om)}^2)\right].
\]
Since $\gamma<q$ and $\tau\leq1$, we absorb the factors
$(1+K)^\gamma$ and $e^{\tau\Gamma_K}$, together with the
preceding state bound, into $e^{C(\tau^{-1}+K^q)}$.
Evaluating at the terminal time, using $z_K(t_*+\tau)=0$ and
enlarging $C$, we obtain
\eqref{eq:log-terminal-error} with \eqref{eq:reference-constant}.
\end{proof}

\begin{proof}[Proof of Theorem~\ref{thm:log-main}\textup{(i)}]
The class of nonlinearities is invariant under
$f^\sharp(z)=\overline{f(\bar z)}$: the real radial part is
unchanged and $\mathfrak g$ is replaced by the globally
Lipschitz function $\overline{\mathfrak g(\bar z)}$.
We solve the uncontrolled $f^\sharp$ equation forward from
$\overline{y_1}$ and conjugate its time reversal. This gives
an uncontrolled reference $u\in C([0,T];L^2)\cap L^4(Q)$
for $f$, with $u(T)=y_1$.

Set $S=\min\{T,1\}$. We construct a control joining $y_0$
to $u(S)$ by time $S$. To this end, put
\[
 a=\frac{6S}{\pi^2},\qquad \tau_j=\frac{a}{(j+1)^2},\qquad
 t_0=0,\quad t_{j+1}=t_j+\tau_j,
 \qquad K_j=A2^j.
\]
Then $t_j\uparrow S$. Let $C,B$ be as in
Lemma~\ref{lem:log-short-control}, and put
$M_0=\|y_0-u(0)\|_2$. Choose $A\geq1$ so that
\begin{equation}\label{eq:log-cascade-parameters}
 C(\tau_j^{-1}+K_j^q)\leq K_j/8\quad(j\geq0),\qquad
 \log\bigl(B+\max\{1,M_0^3\}\bigr)\leq A/8.
\end{equation}
Such a choice is possible because $q<1$. More explicitly,
$\sup_{j\geq0}(j+1)^2/2^j=9/4$, so it suffices that
\[
 A\geq\max\left\{1,\frac{36C}{a},
       (16C)^{1/(1-q)},
       8\log\bigl(B+\max\{1,M_0^3\}\bigr)\right\}.
\]

We apply the preceding reference-trajectory lemma on each
successive interval and use the resulting control in the true
equation. Set
$M_j=\|y(t_j)-u(t_j)\|_2$ and
$H_j=\|h_j\|_{L^2((t_j,t_{j+1})\times\omega)}$.
Then
\[
 H_j\leq e^{K_j/8}M_j,\qquad
 M_{j+1}\leq e^{-7K_j/8}(B+M_j^3).
\]
The parameter choice gives $M_1\leq e^{-3K_0/4}$.
Induction yields
\[
 M_{j+1}\leq e^{-K_j/2},\qquad
 H_j\leq e^{K_j/8-K_{j-1}/2}=e^{-K_j/8}\quad(j\geq1).
\]
Indeed, once $M_j<1$,
$\log(B+M_j^3)\leq\log(B+1)\leq A/8\leq K_j/8$.
Consequently
\[
 \sum_{j\geq0}H_j^2
 \leq e^{A/4}M_0^2+\sum_{j\geq1}e^{-A2^j/4}<\infty.
\]
The concatenated control therefore belongs to $L^2$. We extend
it by zero on $(S,T)$ and apply Lemma~\ref{lem:log-wellposed}
on the full interval. Uniqueness identifies this global solution
with each successive local solution. Its continuity and
$y(t_j)-u(t_j)\to0$ imply $y(S)=u(S)$. Uniqueness on $[S,T]$
then gives $y(T)=u(T)=y_1$. The global well-posedness lemma
also gives the asserted $L^4$ and nonlinear $L^2$ regularity.
This continuity argument identifies the accumulated endpoint
without requiring a mass estimate for the difference $y-u$.

We next prove the quantitative assertion. Let $0<T\leq1$ and
$d=\|y_0\|_2+\|y_1\|_2$. The backward reference satisfies
\[
 \|u\|_{C L^2}\leq C(1+d),\qquad
 \|u\|_{L^4}^2\leq C(1+d^3).
\]
Thus $B\leq C(1+d^3)$ and $M_0\leq C(1+d)$. Put
$W=B+\max\{1,M_0^3\}$. All requirements in
\eqref{eq:log-cascade-parameters} hold for
\[
 A=C_0/T+C_1+8\log W
\]
with sufficiently large fixed $C_0,C_1$. In particular,
\[
 A\leq C\bigl(T^{-1}+\log(2+d)\bigr).
\]
We retain the original control estimate on the first interval,
rather than replacing its exponent by $A/8$. Since
$\tau_0=6T/\pi^2$, we obtain
\[
 \begin{aligned}
 H_0&\leq \exp\!\left(C(\tau_0^{-1}+A^q)\right)M_0\\
 &\leq C e^{C/T}(1+d)
                  \exp\!\left(C[\log(2+d)]^q\right).
 \end{aligned}
\]
Here we used $(a+b)^q\leq a^q+b^q$ for $a,b\geq0$, and
$T^{-q}\leq T^{-1}$. The sum of the squared norms of the
remaining controls is bounded independently of $d$ and $T$.
The mass estimate \eqref{eq:log-mass} gives the same bound for
the supremum norm of the state. Thus \eqref{eq:log-main-cost}
holds for $0<T\leq1$.

For $T>1$, let $J=\lceil T\rceil$ and
$\tau=T/J\in(1/2,1]$. We apply the preceding estimate on the
$J$ successive intervals, joining $y_0$ to zero on the first,
zero to $y_1$ on the last, and zero to zero on each intervening
interval. Since the interval lengths satisfy
$\tau^{-1}<2$, the constants are uniform over these controls.
The squared control norms add, and the state norm is the
maximum of the segment norms. Hence
\[
 \begin{aligned}
 \|h\|_{L^2(Q_\omega)}^2
 &\leq C(1+d)^2\exp\!\left(2C[\log(2+d)]^q\right)+CJ,\\
 \|y\|_{C([0,T];L^2)}
 &\leq C(1+d)\exp\!\left(C[\log(2+d)]^q\right).
 \end{aligned}
\]
Since $J\leq T+1\leq2T$, these bounds give
\eqref{eq:log-main-cost} for $T>1$. The states match at every
junction, so their finite concatenation is a mild solution and
retains the stated $L^4$ and nonlinear $L^2$ regularity.
When $f(0)\ne0$, the intermediate trajectories and controls
need not vanish; their contribution gives the $\sqrt T$ term.

Finally, for every $\varepsilon>0$, the inequality $q<1$ gives
\[
 \sup_{r\geq0}\bigl(Cr^q-\varepsilon r\bigr)<\infty.
\]
Applying this with $r=\log(2+d)$ and using
$2+d\leq2(1+d)$, we obtain
\eqref{eq:log-almost-linear-cost}.

\end{proof}

\begin{proof}[Proof of Theorem~\ref{thm:log-main}\textup{(ii)}]
For $0<\sigma\leq1$, Theorem~\ref{thm:log-main}\textup{(i)}
gives a uniform null-control cost
$C_\lambda e^{C_\lambda/\sigma}$ on the unit ball. For $a>0$,
the transformation
\[
 (y,h)\longmapsto
 (ae^{-i\lambda\log(a^2)t}y,
  ae^{-i\lambda\log(a^2)t}h)
\]
preserves this equation and multiplies all relevant norms by $a$.
Applying this scaling to normalized nonzero initial states gives
a null control with cost linear in the initial norm. For the zero
state, we use the zero control. For arbitrary $T>0$, set
$\sigma=\min\{T/2,1/2\}$. Join $y_0$ to zero on $[0,\sigma]$,
and use the conjugate time reversal of a null trajectory from
$\overline{y_1}$ on $[T-\sigma,T]$. On the intervening interval,
take the state and control to be zero, which is possible since
$f(0)=0$. The equation has the conjugation symmetry required
for this time reversal. The squared control norms add, and
the state norm is the maximum over the two controlled intervals.
Since $\sigma^{-1}=2\max\{T^{-1},1\}\leq2(T^{-1}+1)$,
increasing $C_\lambda$ gives
\eqref{eq:canonical-log-fast-cost} for every $T>0$.
\end{proof}

\begin{remark}
The comparison between the control cost and the truncation
error relies on $K^q=o(K)$, where
$q=2(1+\gamma)/3<1$. The nonuniform stability factor
$e^{C\tau K^\gamma}$ also has a sublinear exponent and is
included in the error estimate. The argument therefore covers
logarithmic powers strictly below $3/2$, including those with a
nonconservative Lipschitz perturbation. It does not apply to
polynomial nonlinearities. The critical logarithmic exponent is
treated below under a smallness condition; the case of arbitrary
critical coefficients remains open. For $\gamma>0$, the
uniqueness class includes the $L^4$ condition needed for the
tail comparison. Uniqueness in general $C_tL^2_x$ mild classes
is not asserted.
\end{remark}

\subsection{The critical logarithmic exponent}\label{subsec:critical-log}

At $\gamma=1/2$, the exponent in the truncated control cost is
linear in $K$. We therefore keep its leading coefficient
explicit and show that smallness of this coefficient allows the
truncation error to decay fast enough for the cascade. We first
treat the pure critical derivative bound and then include the
arbitrary lower-order term in
\eqref{eq:critical-log-two-coefficients}.

\begin{proof}[Proof of Theorem~\ref{thm:log-critical-small}]
Choose $C_*\geq1$ so that \eqref{eq:shifted-lipschitz-cost}
holds with $C_*$ in both occurrences of $C$, and set
\[
 D_*:=\frac35\left(\frac1{64C_*}\right)^{3/2},\qquad
 \mathfrak a(D):=C_*\left(\frac{5D}{3}\right)^{2/3},\qquad
 c_*:=24C_*\left(\frac53\right)^{2/3}.
\]
\emph{Step 1. A pure critical derivative bound.}
Assume first that $D_0=0$, and write $D=D_1$. Thus
\begin{equation}\label{eq:critical-log-smallness}
 r|\phi'(r)|\leq D\sqrt{1+|\log r|},\qquad 0\leq D\leq D_*.
\end{equation}
The truncation, mass, and growth bounds in
Lemma~\ref{lem:log-truncation} remain valid when $\gamma=1/2$.
The proof of Lemma~\ref{lem:log-wellposed} also applies here:
the Cauchy and uniqueness comparisons require only $\gamma<1$,
since $(1+K)^\gamma e^{-K+T(L_g+D(1+K)^\gamma)}\to0$.
The lemma therefore gives the global forced solution and the
uniform bound $\|y\|_{L^4}\leq C(1+M^{3/2})$. We keep
$L_g=\operatorname{Lip}(\mathfrak g)$ fixed.

To compare the control cost with the truncation error, we retain
the dependence of the control exponent on $D$.
Integration of \eqref{eq:critical-log-smallness} gives
\[
 |\phi_K(r)|\leq |\phi(1)|
                 +\frac{2D}{3}(1+K)^{3/2},
 \qquad r|\phi_K'(r)|\leq D(1+K)^{1/2}.
\]
Consequently, with $L_0=|\phi(1)|+L_g$,
\[
 L_K:=\operatorname{Lip}(f_K)
 \leq L_0+\frac{5D}{3}(1+K)^{3/2},
 \qquad
 L_K^{2/3}\leq L_0^{2/3}
                   +\left(\frac{5D}{3}\right)^{2/3}(1+K).
\]
Let $u$ be an uncontrolled reference trajectory. On any interval
$I=[t_*,t_*+\tau]$, $0<\tau\leq1$, we apply
Lemma~\ref{lem:shifted-control} to
$f_K(u+z)-f_K(u)$, with initial state $v$ and terminal state
zero. This shifted nonlinearity has Lipschitz constant at most
$L_K$ and vanishes at zero. Writing $a=\mathfrak a(D)$, we obtain
\begin{equation}\label{eq:critical-truncated-control}
 \|h_K\|_{L^2(I\times\omega)}+\|z_K\|_{C(I;L^2)}
 \leq e^{c_0/\tau+aK}M,\qquad M=\|v\|_2,
\end{equation}
where, for example, one may take
\[
 c_0=C_*+\log C_*+C_*L_0^{2/3}+a.
\]
The coefficient of $K$ in this estimate depends only on
$D$ and the geometry. In particular, increasing $c_0$ does not
increase $a$. The shifted nonlinearity belongs to $L^1(I;L^2)$,
since its modulus is at most $L_K|z_K|$.
Lemma~\ref{lem:interval-l4} therefore gives
$z_K\in L^4(I\times\Om)$.

Let $y_K$ solve the true equation with this control and initial
state $u(t_*)+v$. Put
\[
 R=\|u\|_{C([0,T];L^2)},\qquad
 B=1+R^3+\|u\|_{L^4(Q)}^2\geq1.
\]
The mass estimate and the uniform $L^4$ bound imply
\[
 \|y_K\|_{L^4(I\times\Om)}^2
 \leq C\left(1+R^3+e^{3c_0/\tau+3aK}M^3\right).
\]
We now estimate the truncation error. As in
Lemma~\ref{lem:log-short-control}, we compare $y_K$ and
$u+z_K$ using the same nonlinearity $f_K$. Their initial states
agree, and their source terms differ by
$[f(y_K)-f_K(y_K)]-[f(u)-f_K(u)]$. Thus
\[
 \begin{aligned}
 \|y_K-u-z_K\|_{C(I;L^2)}
 &\leq C(1+K)^{1/2}
 e^{-K+\tau(L_g+D\sqrt{1+K})}\\
 &\quad\times
 \left[\tau\ell^{1/2}+\sqrt\tau
  \bigl(\|y_K\|_{L^4(I\times\Om)}^2
             +\|u\|_{L^4(I\times\Om)}^2\bigr)\right].
 \end{aligned}
\]
For every $\eta>0$,
\[
 (1+K)^{1/2}e^{D\sqrt{1+K}}
       \leq C_{D,\eta}e^{\eta K}\qquad(K\geq1).
\]
Since $\tau\leq1$, we absorb all factors independent of $K$
into $e^{c_\eta/\tau}$, with $c_\eta\geq c_0$.
Evaluating at the terminal time, where $z_K=0$, gives
\begin{equation}\label{eq:critical-terminal-error}
 \|y_K(t_*+\tau)-u(t_*+\tau)\|_2
 \leq e^{c_\eta/\tau-(1-3a-\eta)K}(B+M^3).
\end{equation}
Here $c_\eta$ depends only on the fixed geometry, the displayed
nonlinearity parameters and $\eta$; it is independent of $u$,
$K$, $\tau$, and $v$.

We construct the backward reference from $y_1$ using the
conjugate nonlinearity, as in the proof of
Theorem~\ref{thm:log-main}. Set $S=\min\{T,1\}$ and
\[
 b=\frac{6S}{\pi^2},\qquad
 \tau_j=\frac{b}{(j+1)^2},\qquad K_j=A2^j,
 \qquad M_0=\|y_0-u(0)\|_2.
\]
Set $t_0=0$ and $t_{j+1}=t_j+\tau_j$, so that $t_j\uparrow S$.
On each interval, we choose the preceding control and write
\[
 M_j=\|y(t_j)-u(t_j)\|_2,\qquad
 H_j=\|h_j\|_{L^2((t_j,t_{j+1})\times\omega)}.
\]
Fix $\eta=1/64$. Our smallness assumption gives $a\leq1/64$.
Choose
\begin{equation}\label{eq:critical-cascade-parameter}
 A=1+\frac{36c_\eta}{b}
          +8\log\bigl(B+\max\{1,M_0^3\}\bigr).
\end{equation}
Since $\sup_{j\geq0}(j+1)^2/2^j=9/4$, this ensures
$c_\eta/\tau_j\leq K_j/16$. Moreover,
$3a+\eta\leq1/16$. Equations
\eqref{eq:critical-truncated-control} and
\eqref{eq:critical-terminal-error} therefore give the recursive
bounds used in the subcritical cascade:
\[
 H_j\leq e^{K_j/8}M_j,\qquad
 M_{j+1}\leq e^{-7K_j/8}(B+M_j^3).
\]
The logarithmic term in \eqref{eq:critical-cascade-parameter}
gives $M_1\leq e^{-3K_0/4}$. Once $M_j\leq1$,
$\log(B+M_j^3)\leq A/8\leq K_j/8$.
Induction therefore yields
\[
 M_{j+1}\leq e^{-K_j/2},\qquad
 H_j\leq e^{-K_j/8}\quad(j\geq1).
\]
Thus $\sum_jH_j^2<\infty$. We extend the concatenated control
by zero on $(S,T)$ and apply Lemma~\ref{lem:log-wellposed}
on $[0,T]$. By uniqueness, this global forced solution agrees
with each local trajectory. Its continuity gives $y(S)=u(S)$,
and uniqueness on $[S,T]$ then yields $y(T)=y_1$. The
global well-posedness lemma also gives the asserted $L^4$ and
nonlinear $L^2$ regularity.

To estimate the cost, suppose $T\leq1$ and set
$d=\|y_0\|_2+\|y_1\|_2$. The reference bounds imply
\[
 B\leq C(1+d^3),\qquad M_0\leq C(1+d),\qquad
 W:=B+\max\{1,M_0^3\}\leq C(1+d)^3.
\]
We retain the coefficient $a$ in the first control estimate.
Since $\tau_0=b=6T/\pi^2$, we obtain
\[
 \begin{aligned}
 H_0&\leq e^{c_0/b+aA}M_0
       \leq C e^{C/T}W^{8a}M_0\\
    &\leq C e^{C/T}(1+d)^{1+24a}.
 \end{aligned}
\]
The sum of the squared norms of the remaining controls is
uniformly bounded. The mass estimate gives the same bound for
the supremum norm of the state. We therefore obtain
\begin{equation}\label{eq:critical-log-cost}
 \|h\|_{L^2(Q_\omega)}+\|y\|_{C([0,T];L^2)}
 \leq C e^{C/T}(1+d)^{1+24\mathfrak a(D)}.
\end{equation}

\medskip\noindent
\emph{Step 2. An arbitrary lower-order term.}
Set $a=\mathfrak a(D_1)$ and $L_0=|\phi(1)|+L_g$.
We use the backward reference construction from Step~1, with
$M_0=\|y_0-u(0)\|_2$ and
$B=1+\|u\|_{C L^2}^3+\|u\|_{L^4(Q)}^2$.
The forced well-posedness and tail estimates hold with the
unrestricted constant $D_0+D_1$ and exponent $\gamma=1/2$.
To estimate the control cost, we keep the separate contributions
in the Lipschitz bound:
\[
 L_K\leq L_0+D_0(1+K)+\frac{5D_1}{3}(1+K)^{3/2}.
\]
Subadditivity of the power $2/3$ gives, with
$b=C_*D_0^{2/3}$,
\[
 \|h_K\|_2+\|z_K\|_{C L^2}
 \leq e^{c_0/\tau+aK+bK^{2/3}}M.
\]
Here $c_0$ absorbs the constant terms from
$(1+K)^{2/3}\leq1+K^{2/3}$; the coefficients $a$ and $b$
remain unchanged.
The comparison with the true equation uses the same truncation
as in Step~1 and gives, for every $\eta>0$,
\[
 M_{\rm next}\leq
 e^{c_\eta/\tau-(1-3a-\eta)K+3bK^{2/3}}(B+M^3).
\]
Fix $\eta=1/64$. Since
$3bK^{2/3}\leq K/64+C_b$, the constant $C_b$ can be
absorbed into $\bar c/\tau$, with
$\bar c\geq c_\eta,c_0$ depending only on the fixed
parameters. The same choice of $\bar c$ also gives
\[
 \|h_K\|_2\leq e^{\bar c/\tau+(a+1/192)K}M,
 \qquad
 M_{\rm next}\leq
 e^{\bar c/\tau-(1-3a-1/32)K}(B+M^3).
\]
Set $s=6\min\{T,1\}/\pi^2$,
$\tau_j=s/(j+1)^2$, and $K_j=A2^j$. Choose the cascade parameter
\[
 A=1+\frac{72\bar c}{s}
       +8\log\bigl(B+\max\{1,M_0^3\}\bigr).
\]
Then $\bar c/\tau_j\leq K_j/32$, while $a\leq1/64$.
The control exponent is at most
$(1/32+1/64+1/192)K_j<K_j/8$, and the loss in the
terminal-error exponent is at most
$(1/32+3/64+1/32)K_j=7K_j/64<K_j/8$.
These bounds allow us to use the recursive estimates and the
infinite concatenation from Step~1.

For $T\leq1$, we have
$A\leq C(T^{-1}+\log(2+d))$. To estimate the first control,
we retain the unabsorbed $bA^{2/3}$ term and obtain
\[
 H_0\leq e^{c_0/\tau_0+aA+bA^{2/3}}M_0
 \leq C e^{C/T}(1+d)^{1+24a}
              e^{C[\log(2+d)]^{2/3}}.
\]
Estimating the remaining controls and the state as before, we obtain
\begin{equation}\label{eq:critical-log-lower-order-cost}
 \|h\|_{L^2(Q_\omega)}+\|y\|_{C([0,T];L^2)}
 \leq C e^{C/T}(1+d)^{1+24a}
       \exp\!\left(C[\log(2+d)]^{2/3}\right).
\end{equation}
Since $24a=c_*D_1^{2/3}$, we absorb the final factor into
$C_\varepsilon(1+d)^\varepsilon$ to obtain
\eqref{eq:critical-main-cost} for $0<T\leq1$.
For $T>1$, we use the finite concatenation from the proof of
Theorem~\ref{thm:log-main}\textup{(i)}, with
$J=\lceil T\rceil$ and $\tau=T/J\in(1/2,1]$.
The first and last intervals contribute at most
$C_\varepsilon(1+d)^{2(1+c_*D_1^{2/3}+\varepsilon)}$
to the squared control norm, and the remaining intervals
contribute at most $C_\varepsilon J$.
The state norm is bounded by the maximum of the segment norms.
Taking square roots and using $J\leq2T$, we obtain
\eqref{eq:critical-main-cost} for all $T>0$, with the same
smallness threshold $D_*$ and exponent $c_*D_1^{2/3}$.
This completes the proof.
\end{proof}

\appendix
\section{Verification of the planar phase condition}
\label{app:planar-phase}

In this appendix, we shall verify Assumption~\ref{ass:phase-geometry} for the control
region defined in \eqref{eq:planar-control-region}.
The uncontrolled compact set is $K_\omega=K_0\cup K_1$, where
\[
 K_0=\{x\in\overline\Om:d(x)\geq2\delta\},
 \qquad
 K_1=\{x\in\overline\Om:d(x)\leq\delta,\ \pi(x)\notin\Gamma_c\}.
\]
The two sets are separated by a positive distance.

Near $K_0$, take
\[
 \psi_0(x)=|x-z|^2,
 \qquad z\in\R^2\setminus\overline\Om.
\]
Then $D^2\psi_0=2I$, and $\nabla\psi_0$ is bounded away
from zero near $K_0$.

For the phase near $K_1$, cut the boundary at a point strictly
inside $\Gamma_c$. The resulting arclength coordinate $s$ is
single-valued on a neighborhood of
$\partial\Om\setminus\Gamma_c$.
Let $\gamma(s)$ be the boundary parametrization,
$\tau=\gamma'$, and $\nu$ the outward unit normal.
Define the signed curvature by $\nu'=k\tau$.
In the normal coordinates
\[
 X(s,r)=\gamma(s)-r\nu(s),
\]
where $r>0$ inside $\Om$, set
\[
 \psi_1(X(s,r))=\exp(as+br^2),
 \qquad
 a>0,\quad
 2b>\|k\|_{L^\infty(\partial\Om)}^2.
\]
This formula also defines $\psi_1$ smoothly for small negative
$r$. At $r=0$,
\[
 \partial_\nu\psi_1=0,
 \qquad
 \nabla\psi_1=ae^{as}\tau,
 \qquad
 D^2\psi_1=e^{as}
 \begin{pmatrix}
  a^2 & -ak\\
  -ak & 2b
 \end{pmatrix}
 \quad\text{in the frame }(\tau,\nu).
\]
The first diagonal entry is positive, and the determinant is
\[
 a^2e^{2as}(2b-k^2)>0.
\]
By compactness of $\partial\Om\setminus\Gamma_c$, the Hessian
is uniformly positive definite and the gradient is bounded
away from zero on this boundary arc.
With $a,b$ fixed, continuity preserves these properties near
$K_1$ when $\delta>0$ is sufficiently small.

We combine the two local phases using disjoint open neighborhoods $U_j\subset\R^2$ of
$K_j$, $j=0,1$, with $U_0\Subset\Om$ and $U_1$ contained
in the normal coordinate chart.
Take $\chi_j\in C_c^\infty(U_j)$ equal to one near $K_j$,
and define
\[
 \psi=\chi_0\psi_0+\chi_1\psi_1,
\]
where each product is extended by zero outside $U_j$.
Then $\psi$ is real-valued and smooth on $\R^2$, and
$\psi=\psi_j$ near $K_j$.

By compactness, there exist $c_0,c_1>0$ and a relatively
open neighborhood $\mathcal O$ of $K_\omega$ in
$\overline\Om$ such that
\[
 \begin{aligned}
 |\nabla\psi|\geq c_0,\qquad D^2\psi\geq c_1I
 &\quad\text{on }\mathcal O,\\
 \partial_\nu\psi=0
 &\quad\text{on }\mathcal O\cap\partial\Om.
 \end{aligned}
\]
The boundary part of $\mathcal O$ lies where $\psi=\psi_1$,
and hence the normal derivative vanishes.
This verifies Assumption~\ref{ass:phase-geometry}.

\section*{Acknowledgements}
This work was partially supported by the National Key Research and
Development Program of China (Grant No.~2023YFC2206100), the National
Natural Science Foundation of China (Grant No.~12231008), and the
Fundamental Research Funds for the Central Universities
(Grant No.~2026BRSXB003).

\section*{Conflict of interest}
The authors declare no conflict of interest.

\section*{Data availability}
No data were generated or used in this study.

\end{document}